\begin{document}

\def\LineOn(#1,#2){\overline{{#1},{#2}\rule{0em}{1,5ex}}}
\def\collin{{\bf L}}
\def\Rad{\mathrm{Rad}}
\def\adjac{\mathrel{\sim}}
\def\nadjac{\mathrel{\not\adjac}}
\def\suport{\mathrm{supp}}
\def\msub{{\mathfrak y}}
\def\wsub{{\mathfrak w}}
\def\hipa{{\mathscr H}}
\def\hipy{{\mathcal H}}
\def\hipfun{{\goth h}}
\def\veblparal{\mathrel{\parallel^\circ}}

\def\lines{{\cal L}}
\def\planes{{\mathcal P}}
\def\bloki{{\mathscr B}}

\def\improper(#1){{#1}^{\infty}}
\def\pointnumbsymb{\mbox{\boldmath$v$}}
\def\pointnumb{\pointnumbsymb}
\def\linenumbsymb{\mbox{\boldmath$b$}}
\def\linenumb{\linenumbsymb}
\def\ranksymb{\mbox{\boldmath$r$}}
\def\rank{\ranksymb}
\let\pointrank\rank
\def\lineranksymb{\mbox{\boldmath$\kappa$}}
\def\linerank{\lineranksymb}
\def\dirranksymb{\mbox{\boldmath$\delta$}}
\def\dirrank{\dirranksymb}

\def\upcircled#1{{\bigcirc\mkern-13mu{\raisebox{-0.5pt}{\scriptsize$#1$}}}}
\def\gausnom#1#2#3{{\binom{#1}{#2}}_{#3}}
\def\topof{{\mathrm T}}
\def\starof{{\mathrm S}}
\def\inc{\mathrel{\,\rule[-3pt]{1pt}{12pt}\,}}
\def\ninc{\mathrel{\,\not\mkern1mu\rule[-3pt]{1pt}{12pt}\,}}

\def\Quadr{Q}

\iffalse
\def\gruplin{GL}
\def\gruppollin{\Gamma L}
\def\grupaf{GA}
\def\gruppolaf{\Gamma A}
\def\gruprzut{GP}
\def\grupporzut{\Gamma P}

\def\afgeo{AG}
\def\projgeo{PG} 

\def\WykSpaceSymb{\mathbf{M}}
\def\WykSpace(#1){\WykSpaceSymb(#1)}
\def\WykSpacex(#1){\WykSpaceSymb^\ast(#1)}
\def\GenHavlik{{\bf B}}
\def\fixGenHavl{\mbox{\boldmath$\goth B$}}
\def\GenSlit{{\bf T}}
\def\fixGenSlit{\mbox{\boldmath$\goth T$}}
\fi

\def\afgeo{AG}
\def\projgeo{PG} 

\def\kolczaty{spiky} 
\def\luskwiaty{flappy}

\def\fixproj{\mbox{\boldmath$\goth P$}}
\def\fixaf{\mbox{\boldmath$\goth A$}}
\def\PencSpace(#1,#2){{\bf P}_{#1}({#2})}
\def\VerSpace(#1,#2){{\bf V}_{#1}({#2})}
\def\PolarSpace(#1){{\bf Q}({#1})}
\def\GrasSpace(#1,#2){{\bf G}_{#1}({#2})}
\def\GrasSpacex(#1,#2){{\bf G}^\ast_{#1}({#2})}

\def\myend{\hfill\mbox{\small$\bigcirc$}}

%%%%%%%%%%%%%%%%%%%%%%%%%%%%%%%%%%%
%%%%%%%%%%%%%%%%%%%%%%%%%%%%%%%%%%%  ABBREVIATIONS
%%%%%%%%%%%%%%%%%%%%%%%%%%%%%%%%%%%

\def\plsX{partial linear space}
\def\pls{{\sf PLS}}
\def\apls{{\sf APLS}}
\def\aplsX{affine partial linear space}
\def\paplsX{partially affine partial linear space}

\newenvironment{cmath}{%
  \par
  \smallskip
  \centering
  $
}{%
  $
  \par
  \smallskip
  \csname @endpetrue\endcsname
}

\newenvironment{ctext}{%
  \par
  \smallskip
  \centering
}{%
 \par
 \smallskip
 \csname @endpetrue\endcsname
}

\newtheorem{lema}{Step}
		
%%%%%%%%%%%%%%%%%%%%%%%%%%%%%%%%%%%%%%%%%%%%%%%%%%%%%%%%%%%%%%%%%%%%%
%%%%%%%%%%%%%%%%%%%%%%%%%%%%%%%%%%%%%%%%%%%%%%%%%%%%%%%%%%%%%%%%%%%%%
%%%%%%%%%%%%%%%%%%%%%%%%%%%%%%%%%%%%%%%%%%%%%% TEXT PROPER
%%%%%%%%%%%%%%%%%%%%%%%%%%%%%%%%%%%%%%%%%%%%%%%%%%%%%%%%%%%%%%%%%%%%%
%%%%%%%%%%%%%%%%%%%%%%%%%%%%%%%%%%%%%%%%%%%%%%%%%%%%%%%%%%%%%%%%%%%%%

%\iffalse %% wsheet

\title{Hyperplanes, parallelism and related problems in Veronese spaces}
\author{K. Petelczyc, K. Pra\.zmowski, M. Pra\.zmowska, and M. \.Zynel}
%\created{27.07.2013}
%\info{preliminary ideas, to be used with BST researches}
%\supportauthor{ghostwriters, perhaps}

\maketitle

\begin{abstract}
  We determine hyperplanes in Veronese spaces associated with projective spaces and polar spaces,
  and analyse the geometry of parallelisms induced by these hyperplanes.
  We also discuss if parallelisms on Veronese spaces associated with affine spaces can be imposed.
\end{abstract}

\begin{flushleft}\small
  Mathematics Subject Classification (2010): 51A15, 51A05.\\
  Keywords: Veronese space, projective space, affine space, polar space,
     \plsX, affine \plsX, hyperplane, correlation.
\end{flushleft}
%\fi %% wsheet

\iffalse % ejoc
\begin{frontmatter}

\title{Parallelisms, hyperplanes, and related problems in the theory of Veronese spaces}

\author{K. Petelczyc\corref{kryzpet}}
\ead{kryzpet@math.uwb.edu.pl}
\cortext[kryzpet]{Corresponding Author, telephone: +48 85 7457552}

\author{K. Pra\.zmowski}
\ead{krzypraz@math.uwb.edu.pl}

\author{M. Pra\.zmowska}
\ead{malgpraz@math.uwb.edu.pl}

\author{M. \.Zynel}
\ead{mariusz@math.uwb.edu.pl}

\address{
  Institute of Mathematics, University of Bia{\l}ystok,
  Akademicka 2, 15-267 Bia{\l}ystok, Poland
}

\begin{abstract}
  We study parallelisms on Veronese spaces associated with affine spaces, 
  determine hyperplanes in Veronese spaces associated with projective spaces,
  and analyse the geometry of parallelisms determined by these hyperplanes.
\end{abstract}

\begin{keyword}
  Veronese space\sep projective space\sep affine space\sep polar space\sep
    \plsX\sep affine \plsX\sep hyperplane\sep correlation.
  \MSC[2010] 51A15\sep 51A05  
\end{keyword}

\end{frontmatter}
\fi % ejoc

%%%%%%%%%%%%%%%%%%%%%%%%%%%%%%%%%%%%%%%%%%%%%%%%%%%%%%%%%%%%%%%%%%%%%
%%%%%%%%%%%%%%%%%%%%%%%%%%%%%%%%%%%%%%%%%%%%%%%%%%%%%%%%%%%%%%%%%%%%% INTRO
%%%%%%%%%%%%%%%%%%%%%%%%%%%%%%%%%%%%%%%%%%%%%%%%%%%%%%%%%%%%%%%%%%%%%

\section*{Introduction}

The term `Veronese space' refers, primarily (and historically), to 
the structure of prisms in a projective space with  `double hyperplanes' as the 
points (see \cite{talin:vero}, \cite{talin:finiti}); %\warning[find suitable presentation, Hilbert??] 
after that it refers to an algebraic variety which represents this structure 
(cf.\ e.g.\ \cite{burau})
and as such it was generalized in recent decades, 
and its geometry was studied and developed 
(see e.g.\ \cite{ferrara}, \cite{vero-maldem1}, \cite{vero-maldem2}, \cite{vero-maldem3}).

A task to find a synthetic approach to (`original') Veronese spaces was undertaken
by the group of geometers around G. Tallini in the 70-th's 
and the results are presented in \cite{ferri}, \cite{mazzocca}, \cite{talin:vero}, \cite{talini}.
The way in which Veronese spaces were presented in those papers, where the points are two-element 
sets with repetitions with the elements being projective points, turned out to be 
fruitfully generalizable to an abstract construction, which associates with an arbitrary
partial linear space (or even more generally: with an arbitrary incidence structure)
another partial linear space (incidence structure resp.); this construction and its basic
properties were presented in \cite{verspac}. 
The point universe of the constructed Veronese space $\VerSpace(k,{\goth M})$ 
consists of the $k$-element sets with repetitions with the elements in the universe of
the underlying `starting' structure $\goth M$ 
(cf.\ definitions \eqref{def:verspace}, \eqref{def:verbloki}).

The construction discussed belongs to the family of, informally speaking, 
`multiplying' a given structure somehow in the spirit of `manifold theory':
the structure $\VerSpace(k,{\goth M})$ can be covered by a family of copies of $\goth M$
so as through each point of it there pass $k$ copies of $\goth M$. Any two distinct
copies of $\goth M$ in this covering do not share more than a single point.
The abstract schemas (in fact: partial linear spaces of particular type)
of the coverings induced by the construction of a Veronese space
were  studied in more detail in \cite{combver}.

In this paper we develop to some extent the theory of Veronese spaces associated with
spaces with some additional structures (like a parallelism), we discuss possibilities
to introduce parallelism in the Veronese spaces and related questions, all in a sense 
referring to a broad problem: ``hyperplanes and parallelisms".
The main approach is determining a hyperplane in a Veronese 
space associated with a linear space that contains hyperplanes (say: with a projective space)
and delete this hyperplane. The obtained reduct cannot be presented as a Veronese space
(\ref{thm:verprodNOTverred}). Nevertheless we could succeed in characterizing the hyperplanes in (`classical')
Veronese spaces associated with projective spaces (\ref{thm:hipaINprojver})
and prove that (similarly to the case when a hyperplane is deleted from a 
projective space)
from the obtained \paplsX\ the underlying Veronese space 
can be recovered (\ref{thm:verred2ver}).
We find also interesting examples of hyperplanes in Veronese spaces 
associated with polar spaces and we generalize our construction to this class also.
This completes the set of our main results.

In appendix we discuss another approach:
defining the Veronese space associated with a structure
with a parallelism (say: with an affine space).
There is no simple way to introduce a parallelism on the defined 
`Veronese product' (\ref{thm:noparINver}).
In both approaches we obtain a structure which can be covered by several copies of an affine space.
At the end of this paper we formulate several open problems.
They are out of the scope of our main reasoning but they seem closely related to it.

%\tableofcontents
%%%%%%%%%%%%%%%%%%%%%%%%%%%%%%%%%%%%%%%%%%%%%%%%%%%%%%%%%%%%%%%%%%%%%
%%%%%%%%%%%%%%%%%%%%%%%%%%%%%%%%%%%%%%%%%%%%%%%%%%%%%%%%%%%%%%%%%%%%% BASIC	
%%%%%%%%%%%%%%%%%%%%%%%%%%%%%%%%%%%%%%%%%%%%%%%%%%%%%%%%%%%%%%%%%%%%%

\section{Basics}

\subsection{Incidence structures, {\plsX s}}

In the paper we consider incidence structures i.e.\ structures
of the form 
${\goth W} = \struct{S,\bloki,\inc}$, 
where 
$\inc\;\subseteq\; S\times\bloki$ and $\bloki\neq\emptyset$. 

In the context of (general) incidence structures the binary joinability relation
(adjacency relation) 
$\adjac \,=\, \adjac_{\bloki}$ 
defined over an incidence structure $\struct{S,\bloki,\inc}$
plays a fundamental role:
\begin{equation*}
  a \adjac b \quad\iff\quad (\exists B\in\bloki)[a,b \inc B].
\end{equation*}
An incidence structure is {\em connected} when the relation 
(nonoriented graph) $\adjac$ is connected.
%The formula $L\adjac M$ means that the lines $L,M$ intersect each the other.
For blocks $L, M$ the formula $L\adjac M$ means that they intersect each other.

An incidence structure as above is {\em a  \plsX}\ (a \pls) if any two blocks
that are $\inc$-related to two distinct points coincide, and any block is on at least three points (most of the time, in 
the literature it is assumed that a block of a \pls\ has at least two points,
but {\em in this paper} we need a stronger requirement).
In that case the incidence structure 
${\goth W} = \struct{S,\bloki,\inc}$ is isomorphic to the structure
$\struct{S,\bloki^!,\in}$, where 
$\bloki^! = \big\{ \{ x\in S\colon x \inc B \}\colon B\in\bloki \big\}$;
this second approach to {\plsX s} will be preferred in the paper.

A {\em subspace} of a \plsX\ is a set $X$ of its points such that each line which meets $X$ in at least
two points is entirely contained in $X$. A subspace $X$ is {\em strong} 
(or {\em singular}, or {\em linear})
if any two of its points are adjacent.

A {\em linear space} is a \plsX\ in which any two points are adjacent.
Two classes of linear spaces are of primary interest in geometry: 
affine and projective
spaces, and structures associated with them are of primary interest in this paper.

\subsection{Veronese spaces}

Let $X$ be a nonempty set, we write $\msub_k(X)$ for the set of 
$k$-element sets with repetitions (multisets) with elements in $X$ i.e.\ the set  of
all the functions $f\colon X\longrightarrow \N$ such that
$|f| := \sum_{x\in X} f(x) = k$.
Then, directly by the definition, 
$f \in \msub_{|f|}(X)$.
If $f\in\msub_k(X)$ then
the set $\{ x\in X\colon f(x)\neq 0 \} =: \suport(f)$ is finite
and $\sum_{x\in X} f(x) = \sum_{x\in\suport(f)} f(x)$; in such a case
we write $f = \sum_{x\in\suport(f)} f(x)\cdot x$.
Finally, set
%%\begin{equation*}
 $ \wsub_k(X) = \bigcup \left\{ \msub_l(X)\colon l < k \right\} $. 
%%\end{equation*}

Let ${\goth W} = \struct{S,\bloki,\inc}$ be an incidence structure and
$k$ be an integer.
We set
\begin{equation}\label{def:verbloki}
  \bloki^\upcircled{k} = 
  \big\{ \{e + rx \colon x\inc B\}\colon
  0 < r \leq k ,\; e\in\msub_{k-r}(S),\; B\in\bloki \big\}.
\end{equation}
Then we define (cf.\ \cite{verspac})
\begin{equation}\label{def:verspace}
  \VerSpace(k,{\goth W}) \quad:=\quad
  \struct{\msub_k(S),\bloki^\upcircled{k},\in}.
\end{equation}
The structure $\VerSpace(k,{\goth W})$ will be called the 
{\em ($k$-th) Veronese space associated with} $\goth W$ (or {\em over} $\goth W$).
We also say that it is a Veronese space {\em of level} $k$.

Let us quote after \cite{verspac}, \cite{veradjac}...\
the following simple observations.
\begin{fact}\label{fct:embed:gener}
  Let ${\goth W} = \struct{S,\bloki,\inc}$ be an incidence structure.
  Assume that $\goth W$ satisfies the extensionality principle:
  \begin{equation}
    B_1,B_2\in\bloki \land (\forall x)[x\inc B_1 \iff x\inc B_2]
    \implies B_1 = B_2.
  \tag{\text{\sf Ext}}
  \end{equation}
  %
%  \strut\warning[{\sf Ext} needed for the "last" identification,
%  is it necessary for respective embeddings?]
  Let $k',r$ be nonnegative integers, $r\geq 1$. %% and $k=k'+k''$. 
  Fix $e\in\msub_{k'}(S)$ and define the map
  \begin{eqnarray}\label{def:mu}
    \mu_r\colon	\msub_k(S) \ni f & \longmapsto & r f \in \msub_{r k}(S),
    \\ \label{def:tau}
    \tau_e\colon \msub_k(S) \ni f & \longmapsto & e +f \in\msub_{k+k'}(S)
  \end{eqnarray}
  Then $\mu_r$ is an embedding of $\VerSpace(k,{\goth W})$ into $\VerSpace({r k},{\goth W})$
  and $\tau_e$ is an embedding of $\VerSpace(k,{\goth W})$ into $\VerSpace({k'+k},{\goth W})$.
  \par
  Moreover, the identification 
  \begin{equation}\label{def:ast}
    {}^\ast\colon S\ni x \longmapsto 1 \cdot x (= x^\ast) \in\msub_1(S)
  \end{equation}
  is an isomorphism of
  $\goth W$ and $\VerSpace(1,{\goth W})$
\end{fact}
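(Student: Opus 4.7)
The three claims are independent routine verifications, and the unifying observation is that $\msub_k(S)$ sits inside the free commutative monoid of multisets on $S$: on this monoid both $f\mapsto rf$ (for $r\geq 1$) and $f\mapsto e+f$ (for any fixed $e$) are injective, because $rf=rg$ forces $f(x)=g(x)$ pointwise on $S$, and $e+f=e+g$ yields $f=g$ by pointwise subtraction of $e$. This settles point-injectivity for both $\mu_r$ and $\tau_e$ at once.

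To show that $\mu_r$ and $\tau_e$ send blocks to blocks, I would just unfold the template \eqref{def:verbloki}. A typical block of $\VerSpace(k,{\goth W})$ is $L=\{e_0+s\cdot x\colon x\inc B\}$ with $0<s\leq k$, $e_0\in\msub_{k-s}(S)$, $B\in\bloki$. Then $\mu_r(L)=\{re_0+(rs)\cdot x\colon x\inc B\}$, and since $0<rs\leq rk$ and $re_0\in\msub_{r(k-s)}(S)=\msub_{rk-rs}(S)$, this is a block of $\VerSpace(rk,{\goth W})$. Similarly $\tau_e(L)=\{(e+e_0)+s\cdot x\colon x\inc B\}$ is a block of $\VerSpace(k+k',{\goth W})$ with shift multiset $e+e_0\in\msub_{(k+k')-s}(S)$. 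Since both maps are already injective on points, they are automatically injective on blocks (distinct point-sets have distinct images under an injection).

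For the statement about ${}^\ast$: taking $k=1$ in \eqref{def:verbloki} forces $r=1$ and $e\in\msub_0(S)=\{0\}$, so the blocks of $\VerSpace(1,{\goth W})$ are precisely the sets $\{1\cdot x\colon x\inc B\}=\{x^\ast\colon x\inc B\}$, one for each $B\in\bloki$. The map ${}^\ast\colon S\to\msub_1(S)$ is visibly bijective, and the induced map $B\mapsto\{x^\ast\colon x\inc B\}$ on blocks is a bijection precisely because $\goth W$ satisfies (\text{\sf Ext})---this is the one place where the extensionality hypothesis is actually exploited, since otherwise two distinct blocks of $\goth W$ with identical incidence sets would be identified in $\VerSpace(1,{\goth W})$.

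No genuine obstacle arises: the whole argument is bookkeeping on the index data $(r,e,B)$ of a block. The only point where some care is required is checking that the transformed parameters still satisfy the admissibility conditions in \eqref{def:verbloki}, namely $0<rs\leq rk$ for $\mu_r$ and $e+e_0\in\msub_{(k+k')-s}(S)$ for $\tau_e$; both are immediate from $r\geq 1$ and $k'\geq 0$.
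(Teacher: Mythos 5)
Your verification is correct and is exactly the routine bookkeeping the paper has in mind: the paper gives no proof of this Fact, quoting it as a simple observation from \cite{verspac} and \cite{veradjac}. Your unfolding of the block template \eqref{def:verbloki}, the pointwise-injectivity argument for $\mu_r$ and $\tau_e$, and the correct identification of where ({\sf Ext}) is actually used (namely, only to make the induced block map of ${}^\ast$ injective, so that $\VerSpace(1,{\goth W})\cong{\goth W}$ rather than merely $\cong\struct{S,\bloki^!,\in}$) all match what the authors intend.
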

\begin{fact}\label{fct:combver:pls}
  The structure 
  \begin{equation}\label{def:combveron}
    \VerSpace(k,S) := 
    \VerSpace({k},{\struct{S,\{ S \},\in}})
  \end{equation}
  is a \plsX.
\end{fact}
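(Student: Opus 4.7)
Unwinding the definitions, the incidence structure $\struct{S,\{S\},\in}$ has a single block, namely $S$ itself, with every point of $S$ incident to it. Hence by the formula \eqref{def:verbloki} the lines of $\VerSpace(k,S)$ are precisely the sets
\[
  L_{r,e} \;=\; \{\, e + rx \colon x\in S\,\}, \qquad 0 < r \le k,\ e \in \msub_{k-r}(S).
\]
To verify the two \plsX\ axioms used in this paper it suffices to show (a) each $L_{r,e}$ contains at least three points, and (b) any two such lines that share two distinct points coincide.

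For (a) I would note that the parametrization $S\ni x \longmapsto e + rx$ is injective: if $e + r\chi_x = e + r\chi_y$ as functions $S\to\mathbb{N}$, then $r\chi_x = r\chi_y$, and since $r>0$ this forces $x=y$. Hence $|L_{r,e}| = |S|$, which is at least three under the standing assumption on $S$.

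For (b) I would pass to the ambient free abelian group $\mathbb{Z}^S$, in which $\msub_k(S)$ embeds as the set of nonnegative functions of total weight $k$. Suppose $f_1,f_2$ are two distinct common points of $L_{r_1,e_1}$ and $L_{r_2,e_2}$, and write $f_i = e_1 + r_1 x_i = e_2 + r_2 y_i$ with $x_1\neq x_2$ and $y_1\neq y_2$. Taking the difference in $\mathbb{Z}^S$ eliminates both base multisets:
\[
  r_1(\chi_{x_1}-\chi_{x_2}) \;=\; f_1 - f_2 \;=\; r_2(\chi_{y_1}-\chi_{y_2}).
\]
Reading off the positive part of this element gives $r_1\chi_{x_1} = r_2\chi_{y_1}$, whence $r_1 = r_2$ and $x_1 = y_1$; the negative part gives $x_2 = y_2$. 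Substituting back yields $e_1 = f_1 - r_1 x_1 = f_1 - r_2 y_1 = e_2$, so all three parameters agree and $L_{r_1,e_1} = L_{r_2,e_2}$.

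There is essentially no obstacle here beyond routine bookkeeping with multisets. The only point that requires a moment of care is keeping additive notation on $\msub_*(S)$ cleanly separated from subtraction, which is why one should work in $\mathbb{Z}^S$ for step (b); that move is exactly what isolates the ``direction'' data $(r,\{x_1,x_2\})$ of a line from its ``base point'' $e$, after which (b) reduces to a one-line support argument.
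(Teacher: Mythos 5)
Your argument is correct, and it is essentially the only reasonable one: the paper states this fact without proof (it is quoted as folklore from the cited earlier work), so the routine verification you give -- injectivity of $x\mapsto e+rx$ for the three-point condition, and recovery of $(r,e)$ from two distinct common points via the difference in $\mathbb{Z}^S$ -- is exactly what is expected. The one caveat you already flag correctly: with this paper's convention that every block of a \plsX\ has at least three points, the statement implicitly requires $|S|\geq 3$, which holds in all the intended applications where $S$ is the point set of a \plsX\ whose lines have at least three points.
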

The lines of $\VerSpace(k,S)$ will be called {\em leaves of} 
$\VerSpace(k,{\goth W})$.

The following is a folklore 
(see \cite{verspac}, \cite{veradjac})  %%  straightforward consequence then 
\begin{prop}
  A Veronese space associated with a {\plsX} is a \plsX.
\end{prop}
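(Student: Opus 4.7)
The plan is to verify the two defining axioms of a \pls: every line carries at least three points, and two lines sharing two distinct points must coincide. I would fix a \pls\ ${\goth W} = \struct{S,\bloki,\inc}$ and consider a typical line $L = \{e + rx\colon x\inc B\}$ of $\bloki^\upcircled{k}$, with $0<r\leq k$, $e\in\msub_{k-r}(S)$ and $B\in\bloki$.

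For the cardinality axiom, I would observe that $B$ carries at least three incident points (because $\goth W$ is a \pls) and that the map $x\mapsto e+rx$ is injective on $\{x\in S\colon x\inc B\}$: two distinct points $x\neq x'$ produce multisets that disagree in the coordinates $x$ and $x'$, since $r>0$. Hence $|L|\geq 3$.

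For the coincidence axiom, I would take two lines $L_i = \{e_i + r_i x\colon x\inc B_i\}$, $i=1,2$, sharing two distinct multisets $f,g$ and write
\begin{equation*}
f = e_1 + r_1 x_1 = e_2 + r_2 y_1, \qquad g = e_1 + r_1 x_2 = e_2 + r_2 y_2,
\end{equation*}
with $x_1\neq x_2$ and $y_1\neq y_2$. Viewing multisets as $\N$-valued functions on $S$ and passing to $\mathbb{Z}$-valued functions in order to subtract, the difference $f-g$ computed from each parameterization gives
\begin{equation*}
r_1(x_1 - x_2) = r_2(y_1 - y_2).
\end{equation*}
The key step is then a short case analysis on the supports $\{x_1,x_2\}$ versus $\{y_1,y_2\}$: positivity of $r_1$ and $r_2$ forces these two two-element subsets of $S$ to coincide, while the swapped pairing $x_1=y_2$, $x_2=y_1$ would imply $r_1=-r_2$. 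Hence $x_1=y_1$, $x_2=y_2$, $r_1=r_2$, and in turn $e_1=e_2$. Finally, injectivity of $x\mapsto e_1 + r_1 x$ identifies $L_i$ set-theoretically with the point set of $B_i$; since $x_1, x_2$ are two common points of $B_1$ and $B_2$, the \pls\ axiom for $\goth W$ yields $B_1=B_2$, and thus $L_1=L_2$.

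The only genuine obstacle I foresee is this case analysis matching $(x_1,x_2)$ with $(y_1,y_2)$ and ruling out the swapped pairing; everything else is straightforward bookkeeping on the triple $(e,r,B)$ parameterizing a line of $\VerSpace(k,\goth W)$.
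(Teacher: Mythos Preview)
Your argument is correct. The injectivity of $x\mapsto e+rx$ gives the three-point condition, and your subtraction trick cleanly forces $\{x_1,x_2\}=\{y_1,y_2\}$ (the supports of the $\mathbb{Z}$-valued function $f-g$ must match), after which the sign constraint rules out the swap and the \pls\ axiom in $\goth W$ finishes the job.

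There is nothing to compare against: the paper does not prove this proposition but records it as folklore with references to \cite{verspac} and \cite{veradjac}. Your self-contained verification is exactly the kind of elementary bookkeeping one expects here, and the one step you flagged as the ``genuine obstacle'' is indeed the only place where anything beyond unwinding definitions happens.
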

Recall after \cite[Prop. 2.9]{verspac} the following
\begin{fact}\label{fct:beztroj}
  Let $B$ be  a block of the Veronese space $\VerSpace(k,{\goth W})$
  associated with an incidence structure $\goth W$ and $\topof(B)$
  be the leaf of $\VerSpace(k,{\goth W})$ which contains $B$.
  If  a point $e$ is adjacent to at least three points on $B$
  then $e  \in \topof(B)$.
%%  
%%  there are pairwise distinct $f_1,f_2,f_3\in \topof(B)$
%%  with $f_i \adjac f$ for $i = 1,2,3$ then $f\in \topof(B)$.
\end{fact}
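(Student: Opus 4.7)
The plan is to reduce the claim to a combinatorial question about multisets. Every block of $\VerSpace(k,{\goth W})$ is also a block of the combinatorial Veronese space $\VerSpace(k,S)$ of Fact~\ref{fct:combver:pls}, and $\topof(B)$ is a line of $\VerSpace(k,S)$; consequently, adjacency in $\VerSpace(k,{\goth W})$ implies adjacency in $\VerSpace(k,S)$, and it suffices to prove the claim after replacing $\goth W$ by $\struct{S,\{S\},\in}$. I would then fix a representation $B=\{e_0+rx\colon x\inc B_0\}$ with $e_0\in\msub_{k-r}(S)$, $B_0\in\bloki$, $0<r\le k$, so that $\topof(B)=\{e_0+rx\colon x\in S\}$.

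Next I would render adjacency in $\VerSpace(k,S)$ concrete: two multisets $g,h\in\msub_k(S)$, viewed as functions $S\to\N$, are adjacent if and only if they agree outside at most two coordinates; the option of differing at a single coordinate is excluded by $|g|=|h|=k$. Fix three distinct points $f_j=e_0+rx_j$ of $B$ to which $e$ is adjacent (so $x_1,x_2,x_3\inc B_0$ are pairwise distinct) and assume $e\ne f_j$ for every $j$, so $e$ differs from each $f_j$ in exactly two coordinates.

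The technical core is to analyse the signed difference $\sigma:=e-e_0$, an integer-valued function on $S$, with support $S'=\{y\colon\sigma(y)\ne 0\}$ and the global identity $\sum_y\sigma(y)=|e|-|e_0|=r$. For each $j$ the adjacency condition reads $|\suport(\sigma-r\cdot x_j)|=2$, and a short inspection shows that this cardinality equals $|S'|-1$, $|S'|$, or $|S'|+1$, according as ($x_j\in S'$ with $\sigma(x_j)=r$), ($x_j\in S'$ with $\sigma(x_j)\ne r$), or ($x_j\notin S'$). Hence $|S'|\in\{1,2,3\}$, and because $|S'|$ does not depend on $j$, the same sub-case must occur for all three $j$.

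Three cases remain: $|S'|=2$ forces $\{x_1,x_2,x_3\}\subseteq S'$, which is absurd; $|S'|=3$ forces $S'=\{x_1,x_2,x_3\}$ with $\sigma(x_j)=r$ for each $j$, giving $\sum_y\sigma(y)=3r$, contradicting the identity $\sum_y\sigma(y)=r$ since $r>0$; the only surviving possibility is $|S'|=1$, which yields $\sigma=r\cdot y_0$ for a unique $y_0\in S$, so $e=e_0+r y_0\in\topof(B)$, as required. The main (but mild) obstacle is the three-way case analysis together with the uniformity of $|S'|$ across the $f_j$'s; switching to the signed-difference viewpoint makes the bookkeeping mechanical and linear.
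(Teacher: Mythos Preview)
Your argument is correct. The reduction to the combinatorial Veronesian $\VerSpace(k,S)$ is legitimate (adjacency in $\VerSpace(k,{\goth W})$ implies adjacency in $\VerSpace(k,S)$, and the leaf $\topof(B)$ is a line of $\VerSpace(k,S)$), the characterization of adjacency in $\VerSpace(k,S)$ as ``differ in exactly two coordinates'' is right, and the signed-difference bookkeeping with $\sigma=e-e_0$ goes through cleanly: the three sub-cases give support sizes $|S'|-1,\,|S'|,\,|S'|+1$, so a fixed $|S'|$ forces all three $x_j$ into the same sub-case, and then $|S'|=2$ and $|S'|=3$ are ruled out exactly as you say.

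There is nothing to compare against in the present paper: Fact~\ref{fct:beztroj} is not proved here but merely quoted from \cite[Prop.~2.9]{verspac}. Your self-contained combinatorial argument is a perfectly good replacement for that citation; the passage through $\VerSpace(k,S)$ and the signed-difference $\sigma$ is essentially the natural way to unpack the situation, and one would expect the original proof in \cite{verspac} to follow similar lines.
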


Note that a leaf $e+ rS $ of a Veronese space can be identified with $e\in\wsub_k(S)$.
Indeed, 
a leaf of $\VerSpace(k,{\goth W})$ uniquely associated with $e$
has the form 
$ e + (k-|e|)S $.

Each leaf  $ e + (k-|e|)S $ of $\VerSpace(k,{\goth W})$
is a subspace of $\VerSpace(k,{\goth W})$ isomorphic to $\goth W$
under the map $x \longmapsto e + (k-|e|)\cdot x$.

\subsection{Veblen and Net Configurations in Veronese spaces}

A {\em Veblen configuration} consists of four lines no three concurrent, and
any two intersecting each the other. In a more explicit way we call 
{\em a Veblen Configuration} a 4-tuple of lines $L_1,L_2,M_1,M_2$ and a point $p$
such that $p\inc L_1,L_2$, $p\not\inc M_1,M_2$, $L_i\adjac M_j$ for $i,j=1,2$,
and $M_1\adjac M_2$. An {\em incomplete Veblen configuration} is the family as above, where
the condition $M_1\adjac M_2$ is not assumed.

A {\em Net configuration} consists of four lines $L_1,L_2,L_3,L_4$ and four points
$p_1,p_2,p_3,p_4$ such that
$p_i\inc L_i,L_{i+1 \mod 4}$, and
%% $L_i \not\adjac L_{i+2\mod 4}$,
$p_i\not\adjac p_{i+2\mod 4}$
for $i=1,2,3,4$.
In plain words, it is a quadrangle in which  
diagonals do not exist.
In the literature the condition 
$L_i \not\adjac L_{i+2\mod 4}$
(= `opposite sides do not intersect') is frequently added;
in the configurations arising via affinizations such a 
requirement is too restrictive.

Together with these two configurations two configurational axioms are considered:
the {\em Veblen condition} and the {\em Net axiom}.
The Veblen condition states that every incomplete Veblen configuration closes 
i.e.\ if 
$p, L_1, L_2, M_1, M_2$ is an incomplete Veblen configuration defined above then
$M_1 \adjac M_2$.
The net axiom states that if $L_1,L_2,L_3,L_4$ is a quadrangle defined above,
$M_1 \adjac L_1,L_3$, $M_2 \adjac L_2,L_4$ then $M_1\adjac M_2$.
A \plsX\ which satisfies the Veblen condition is called {\em veblenian}.
Explicit forms of  the Veblen configurations contained in a Veronese space are shown in
\cite[Lem. 3.1]{verspac}. Let us recall this characterization.
\begin{fact}\label{fct:vebinver}
  Let\/ ${\goth M}_0$ be a \plsX\ and\/ ${\goth M} = \VerSpace(k,{\goth M}_0)$.
  A Veblen configuration contained in\/ $\goth M$ arises as a result of a natural 
  embedding of one of  the following figures
  \begin{enumerate}[\rm (i)]\itemsep-2pt
  \item\label{vebinver:typ1}
    a Veblen configuration contained in\/ %% a leaf of $\goth M$,
    ${\goth M}_0$
  \item\label{vebinver:typ2}
    the set $\{ a + m \colon a \in A \}$ for a $4$-subset $A$ of a line $m$ of\/ ${\goth M}_0$,
  \item\label{vebinver:typ3}
    the set $\{ a + m \colon a \in A \} \cup \{ 2 m \}$ for a $3$-subset $A$ of a line $m$
    of\/ ${\goth M}_0$.
  \end{enumerate}
\end{fact}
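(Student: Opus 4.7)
The plan is a case analysis based on how the four lines $L_1, L_2, M_1, M_2$ of the Veblen configuration distribute among the leaves of $\VerSpace(k,{\goth M}_0)$. The main structural inputs are that every line has a unique top leaf $\topof(\cdot)$ and that each leaf is a strong subspace isomorphic to ${\goth M}_0$, so any line sharing two points with a leaf is contained in it; in addition, two distinct leaves meet in at most one point.

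First I would dispatch the one-leaf case. Suppose $L_1$ and $L_2$ both lie in a leaf $\topof$. Then $M_1$ meets $\topof$ at the two points $M_1 \cap L_1$ and $M_1 \cap L_2$, and these are distinct, since otherwise their common value would lie in $L_1 \cap L_2 = \{p\}$, forcing $p \inc M_1$ contrary to assumption. Hence $M_1 \subseteq \topof$, and symmetrically $M_2 \subseteq \topof$. Pulling everything back along the isomorphism $\topof \cong {\goth M}_0$ exhibits the configuration as the image of a Veblen configuration in ${\goth M}_0$, which is case (\ref{vebinver:typ1}).

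The substance of the proof is the case $\topof_1 := \topof(L_1) \neq \topof_2 := \topof(L_2)$. Here $\topof_1 \cap \topof_2 = \{p\}$, and the same two-point argument (applied to each $M_j$ against $L_1, L_2$) shows that $\topof(M_j)$ is distinct from both $\topof_1$ and $\topof_2$. Writing $L_i = e_i + r_i C_i$ with $p = e_i + r_i x_i$ for $x_i \in C_i$ and comparing the two multiset presentations of $p$, I would derive a canonical form: either $x_1 \neq x_2$, so that $p = e + r_1 x_1 + r_2 x_2$ for a residual multiset $e$ of size $k - r_1 - r_2$ and consequently $L_1 = e + r_2 x_2 + r_1 C_1$, $L_2 = e + r_1 x_1 + r_2 C_2$; or $x_1 = x_2$, which forces $r_1 \neq r_2$ (else the two leaves coincide) and leads to the "doubled" branch in which this common letter absorbs both multiplicities.

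The final step runs the same canonical-form analysis for the pairs $(L_i, M_j)$ and then exploits the closure condition $M_1 \adjac M_2$. The five simultaneous incidence relations combine to force all four lines to have $r_i = 1$, to share a common base line $m$ of ${\goth M}_0$ and a common padding $e \in \msub_{k-2}(S)$, so that they take the form $e + a + m$ with $a$ ranging over a $4$-subset of $m$, giving case (\ref{vebinver:typ2}); the $x_1 = x_2$ branch produces one line of the form $e + 2m$ in place of one of these, yielding case (\ref{vebinver:typ3}). The main obstacle is precisely this last bookkeeping step: one has to thread the parameters $(e_i, r_i, C_i)$ of all four lines through the incidence pattern and rule out, via repeated appeals to Fact \ref{fct:beztroj} and the single-point intersection of distinct leaves, every combinatorial possibility other than (\ref{vebinver:typ2}) and (\ref{vebinver:typ3}).
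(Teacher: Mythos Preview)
The paper does not prove this fact: it is quoted from \cite[Lem.~3.1]{verspac} with the words ``Explicit forms of the Veblen configurations contained in a Veronese space are shown in \cite[Lem.~3.1]{verspac}. Let us recall this characterization'', and no argument is supplied. So there is no in-paper proof to compare your attempt against.

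That said, your outline is the natural one and matches the shape of the argument in the cited source: split on whether $\topof(L_1)=\topof(L_2)$, use that leaves are strong subspaces meeting pairwise in at most one point to force the one-leaf case into type~(i), and in the multi-leaf branch unwind the multiset presentations of the intersection points to pin down a common base line $m$ and common padding $e$. Your candid flag that the final bookkeeping is the real content is accurate. One point to watch: the assertion that ``all four lines have $r_i=1$'' is not literally what you want, since the phrase ``natural embedding'' in the statement is meant to cover the maps $\tau_e$ and $\mu_r$ of Fact~\ref{fct:embed:gener}; the correct conclusion is that, after factoring out a common $\mu_r\circ\tau_e$, the configuration reduces to the level-$2$ picture described in (ii) or (iii).
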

From this one derives, in particular, that the Veblen axiom is not, generally, preserved
under Veronese products.
Let $\kappa({\goth M}_0) = \kappa$ 
be the size of a line of ${\goth M}_0$.
Note that $\VerSpace(k,{\goth M}_0)$ contains Veblen subconfiguration of 
the form \eqref{vebinver:typ2} iff $\kappa \geq 4$
and it contains Veblen subconfiguration of the form \eqref{vebinver:typ3}
iff $\kappa \geq 3$.
However, one particular case `behaves' more regularly.
With the classification of triangles in Veronese spaces given in \cite[Fact 4.1]{veradjac}
and standard computations we directly justify
\begin{fact}\label{fct:veb2-2ver}
  If a \plsX\ ${\goth M}_0$ is veblenian then 
  $\VerSpace(2,{\goth M}_0)$ is veblenian as well.
\end{fact}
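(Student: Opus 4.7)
Take an incomplete Veblen configuration $(p; L_1, L_2, M_1, M_2)$ in $\VerSpace(2, {\goth M}_0)$, and set $q_i := L_1 \cap M_i$, $r_i := L_2 \cap M_i$; the five points $p, q_1, q_2, r_1, r_2$ are pairwise distinct because no three of the four lines are concurrent. The plan is to split the argument on whether $L_1$ and $L_2$ lie in a common leaf of $\VerSpace(2, {\goth M}_0)$, since leaves are subspaces isomorphic to ${\goth M}_0$ and two distinct leaves share at most one point.

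If $\topof(L_1) = \topof(L_2) =: F$, then $q_i, r_i \in F$. Because $F$ is a subspace of $\VerSpace(2, {\goth M}_0)$ isomorphic to ${\goth M}_0$, each $M_i$ has two points in $F$ and therefore lies entirely in $F$. The whole incomplete Veblen configuration thus sits inside a copy of ${\goth M}_0$, and the veblenianity of ${\goth M}_0$ closes it, giving $M_1 \adjac M_2$.

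The substantive case is $F_1 := \topof(L_1) \neq \topof(L_2) =: F_2$, so $F_1 \cap F_2 = \{p\}$, $q_i \in F_1 \setminus F_2$, $r_i \in F_2 \setminus F_1$. The shape of $p$ is either $p = x_0 + y_0$ with $x_0 \neq y_0$ (whence $\{F_1, F_2\} = \{x_0 + S, y_0 + S\}$) or $p = 2 x_0$ (whence $\{F_1, F_2\} = \{2 S, x_0 + S\}$); in either shape $L_1, L_2$ admit explicit coordinate descriptions in terms of blocks $B_1, B_2$ of ${\goth M}_0$ through suitable elements. A short matching-of-multisets computation then shows that any line of $\VerSpace(2, {\goth M}_0)$ passing through $q_i$ and $r_i$ is essentially forced: the ``second coordinates'' of $q_i$ and $r_i$ must coincide (call the common value $z_i$), and $M_i$ takes the form $z_i + B_0^{(i)}$, or the degenerate variant $2 B_0^{(i)}$ when $q_i$ and $r_i$ both live in the doubled leaf, with $B_0^{(i)}$ a block of ${\goth M}_0$ through the two ``first coordinates'' of $p$. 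Because ${\goth M}_0$ is a \pls, the block joining two distinct points is unique when it exists, so $B_0^{(1)} = B_0^{(2)} =: B_0$, and the same uniqueness applied to pairs among $\{x_0, z_1, z_2\}$ and $\{y_0, z_1, z_2\}$ forces $B_0 = B_1 = B_2$. The multiset $z_1 + z_2$ (respectively $2 z_1$ in the mixed degenerate case) then lies on both $M_1$ and $M_2$.

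The main obstacle is the bookkeeping in the substantive case: for each shape of $p$ one must run sub-cases on whether each $M_i$ is horizontal ($c + B$) or doubled ($2 B$), verify using $z_1 \neq z_2$ that at most one of $M_1, M_2$ can be doubled, and check the coordinate identifications in each branch. Every individual branch is a routine multiset computation, but the tree of branches must be organized carefully to avoid missing degeneracies; the classification of triangles in Veronese spaces from \cite[Fact 4.1]{veradjac} provides a convenient organizational template, matching the author's remark about ``standard computations''.
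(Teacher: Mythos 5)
Your proof is correct and is essentially the argument the paper gestures at: the paper's own justification is a one-line appeal to the classification of triangles in $\VerSpace(2,{\goth M}_0)$ plus ``standard computations'', and your decomposition --- $L_1,L_2$ in a common leaf (so both $M_i$ acquire two points of that leaf, lie in it, and the configuration closes inside a copy of ${\goth M}_0$) versus distinct leaves (where the multiset bookkeeping and uniqueness of joining blocks in a \pls\ force all four lines to be of the form $c+B_0$ or $2B_0$ for one block $B_0$, so the configuration closes automatically as in \ref{fct:vebinver}(ii) or (iii)) --- is a correct instantiation of those computations. The only cosmetic points are that the common point in the mixed case is $2z_j$ for the non-doubled line $M_j$, and that the degenerate subcases $q_1=q_2$ or $r_1=r_2$ close trivially, so nothing substantive is missing.
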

Combining \ref{fct:vebinver} and \ref{fct:beztroj} we obtain
\begin{lem}
  Let $p, L_1, L_2, M_1, M_2$ form an incomplete Veblen configuration in a Veronese
  space associated with a \plsX\/ ${\goth M}_0$.
  Assume, moreover, that  points in one of the following  pairs %% o intersection points
  $(L_1\cap M_1,\,  L_2\cap M_2)$,
  $(L_1\cap M_2,\, L_2\cap M_1)$
%%  $L_i \cap M_i$, $L_{3-i}\cap M_{3-i}$ 
  are collinear. 
%%  for some $i \in \{1,2 \}$.
  Then the lines $L_1,L_2,M_1,M_2$ are contained in a leaf.
  So, if\/ ${\goth M}_0$ is Veblenian, this configuration closes.
\end{lem}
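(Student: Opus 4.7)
The plan is to exploit Fact \ref{fct:beztroj}: a point adjacent to three points of a block must sit in the leaf of that block. Without loss of generality assume the collinear pair is $(L_1\cap M_1,\,L_2\cap M_2)$, and denote the four intersection points $q_1 := L_1\cap M_1$, $q_2 := L_2\cap M_2$, $q_3 := L_1\cap M_2$, $q_4 := L_2\cap M_1$. Let $N$ be a line through $q_1,q_2$ furnished by the hypothesis.

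The key step is to show $q_2\in\topof(L_1)$. For this I would exhibit three points on $L_1$ to which $q_2$ is adjacent: the apex $p$ (via $L_2$), the point $q_1$ (via $N$), and the point $q_3$ (via $M_2$). The pairwise distinctness of $p,q_1,q_3$ is a quick check: $p$ differs from $q_1$ and from $q_3$ because $p\ninc M_1$ and $p\ninc M_2$, and $q_1\neq q_3$ since otherwise $L_1,M_1,M_2$ would be concurrent at a common point, contradicting the no-three-concurrent clause built into the definition of a Veblen configuration. Fact \ref{fct:beztroj} then yields $q_2\in\topof(L_1)$.

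From here the subspace property of leaves finishes the argument. The line $L_2$ meets $\topof(L_1)$ in the two distinct points $p$ and $q_2$, so $L_2\subseteq\topof(L_1)$; consequently $\topof(L_1)=\topof(L_2)=:\Lambda$. Each of $M_1,M_2$ meets $\Lambda$ in its intersection with $L_1$ and in its intersection with $L_2$, and these are distinct by the same no-three-concurrent argument; hence $M_1,M_2\subseteq\Lambda$, and all four lines lie in the leaf $\Lambda$.

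The veblenian clause is then immediate: $\Lambda$ is isomorphic to ${\goth M}_0$, so if ${\goth M}_0$ is veblenian the incomplete Veblen configuration inside $\Lambda$ closes, giving $M_1\adjac M_2$. The only subtle point in the plan is the repeated use of the no-three-concurrent condition to justify distinctness; once this is in hand, the rest is a direct invocation of Fact \ref{fct:beztroj} together with the fact that leaves are subspaces.
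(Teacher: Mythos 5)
Your proof is correct and follows the route the paper intends: the paper offers no written argument beyond ``combining \ref{fct:vebinver} and \ref{fct:beztroj}'', and your reconstruction is exactly the direct application of \ref{fct:beztroj} (the extra collinearity hypothesis supplying the third adjacency of $L_2\cap M_2$ to points of $L_1$) followed by the subspace property of leaves; the distinctness checks via $p\ninc M_1,M_2$ and partial linearity are sound. You do not even need the classification \ref{fct:vebinver}, which only serves to show that the excluded configurations are precisely those where the hypothesis fails.
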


Analogously, explicit forms of a realization of the Net Configuration in
a Veronese space were also established in \cite{verspac}.
It follows that the Net Axiom is not, generally, preserved under Veronese
`products'.
In the sequel we shall concentrate on a pretty special case of Veronese spaces,
namely on the structures $\VerSpace(k,{\goth M}_0)$ with $k=2$ 
(i.e.\ on those originally considered in the history).
In this case more `regular' figures appear.
Clearly, if $L_1,L_2,L_3,L_4$ form a quadrangle 
%%(we recall: without diagonals)
with $\topof(L_1) = \topof(L_3)$ then the quadrangle in question is obtained
by an embedding of a quadrangle in ${\goth M}_0$.
If ${\goth M}_0$ is a linear space (and this case is, primarily, studied
in this paper) and $\topof(L_1) = \topof(L_2)$ then such a quadrangle
has a diagonal.
Consequently, searching for a quadrangle without diagonals we can restrict ourselves
to the case 
$\neq (\topof(L_1),\topof(L_2),\topof(L_3),\topof(L_4))$.
Such a quadrangle without diagonals will be called {\em proper}
(note: this definition makes sense only in structures in which 
the notion of a `leaf' was introduced).
The following is just a matter of direct (though quite tedious) verification.
Let ${\goth M}_0$ be a linear space and  ${\goth M} = \VerSpace(2,{\goth M}_0)$
\begin{lem}\label{lem:vernet1}
  Let $L_1,K_1,L_2,K_2$ be lines of\/ $\goth M$ in 
  pairwise distinct leaves. These lines yield a quadrangle without diagonals
  iff one of the following holds, up to permutations: of the  pairs
  $(L_1,L_2),(K_1,K_2)$, and of lines in each of these pairs.
  \begin{sentences}\itemsep-2pt
  \item\label{vernet1:typ1}
    There are lines $m,n$ of\/ ${\goth M}_0$ and points
    $a_1,b_1 \in n$, $a_2,b_2\in m$ such that
    \begin{ctext}
      $L_1 = a_1 + m$, $L_2 = b_1 + m$,
      $K_1 = a_2 + n$, and $K_2  = b_2 + n$.
    \end{ctext}
  \item\label{vernet1:typ2}
    There are three lines $m,n,l$ of ${\goth M}_0$  and points $a,b,c$
    such that $a,b \in n$,  $a,c \in m$, $b,c \in l$ and
    \begin{ctext}
      $K_1 = 2n$, $K_2 = c+ n$,
      $L_1 = a + m$, and $L_2  = b + l$.
    \end{ctext}
  \end{sentences}
  The vertices of the respective quadrangles are
  $a_1 + a_2$, $a_1 + b_2$, $a_2 + b_1$, $b_1 + b_2$ in \eqref{vernet1:typ1},
  and
  $2a$, $a+c$, $2b$, $b+c$ in \eqref{vernet1:typ2}.
\end{lem}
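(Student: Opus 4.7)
The plan is to parametrize each of the four lines by its leaf. A leaf of ${\goth M} = \VerSpace(2,{\goth M}_0)$ corresponds to an element of $\wsub_2(S) = \{0\}\cup S$: the leaf of index $0$ is $2S$, and its lines are of the form $2m$ for $m$ a line of ${\goth M}_0$; the leaf of index $a\in S$ is $a+S$, and its lines are of the form $a+m$. Since the hypothesis places the four lines in \emph{distinct} leaves, at most one of them has index $0$. This gives two cases to analyse.

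In the first case, no line has index $0$. Writing $L_1=a_1+m_1$, $K_1=a_2+m_2$, $L_2=b_1+m_3$, $K_2=b_2+m_4$ with $a_1,a_2,b_1,b_2$ pairwise distinct, computing the intersection of two lines whose leaf‑indices differ is forced: if $x\neq y$ then $(x+l_1)\cap (y+l_2)=x+y$ with $y\in l_1$ and $x\in l_2$. Applying this to the four consecutive intersections gives the vertices $a_1+a_2,\,a_2+b_1,\,b_1+b_2,\,b_2+a_1$ and shows that $m_1,m_3$ both contain the two distinct points $a_2,b_2$, and $m_2,m_4$ both contain $a_1,b_1$; since ${\goth M}_0$ is a linear space, $m_1=m_3=:m$ and $m_2=m_4=:n$. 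This is exactly case \eqref{vernet1:typ1}.

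In the second case exactly one of the four indices is $0$; up to the permutations listed in the statement we place that line as $K_1=2n$. Then the intersection $p_1=L_1\cap K_1$ must be a ``double'' point, which forces $p_1=2a$ with $a\in n$ and $L_1=a+m$ with $a\in m$; analogously $p_2=2b$, $b\in n\cap l$, $L_2=b+l$. The remaining intersections $p_3,p_4$ with $K_2=c+k$ (where $c\ne a,b$ by distinctness of indices) are computed as in the first case, yielding $p_3=b+c$ and $p_4=a+c$ together with the incidences $c\in m\cap l$ and $a,b\in k$. Now $k$ and $n$ each contain the two distinct points $a,b$, so $k=n$, and the three lines $m,l,n$ are those named in case \eqref{vernet1:typ2}.

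For the converse one verifies that the two opposite pairs of vertices are never adjacent. In \eqref{vernet1:typ1} the multisets $a_1+a_2$ and $b_1+b_2$ share no element, so no line of the form $2l'$ or $e+l'$ can contain both; the pair $a_2+b_1,\,b_2+a_1$ is handled identically. In \eqref{vernet1:typ2} any line through $2a$ has the form $2l'$ (with $a\in l'$) or $a+l'$, but $b+c$ is not a double and $a\notin\{b,c\}$, so $2a\not\adjac b+c$; the pair $2b,\,a+c$ is symmetric. The main — and really only — difficulty is organizing the case split and the ``up to permutations'' clause; once the leaf‑index bookkeeping is set up, everything reduces to the unique‑line axiom in ${\goth M}_0$ and to the forced shape of an intersection of two lines with distinct leaf‑indices.
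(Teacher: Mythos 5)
Your proof is correct. The paper offers no written proof of this lemma (it is introduced as ``just a matter of direct (though quite tedious) verification''), and your argument --- indexing the four lines by their leaves in $\wsub_2(S)=\{0\}\cup S$, using the forced form of the intersection of two lines in distinct leaves, and then invoking the unique-line axiom of the linear space ${\goth M}_0$ to identify the underlying lines, plus the routine check that opposite vertices share no element and hence are never adjacent --- is precisely that verification, correctly carried out in both directions.
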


\begin{lem}\label{lem:vernet2}
  Let $L_1,L_2$ be opposite sides of a proper quadrangle in
  ${\goth M}$.
  Let $K$ be a line of\/ $\goth M$ crossing $L_1,L_2$ 
  such that\/   $\topof(K)\neq \topof(L_1),\topof(L_2)$.
  Then one of the following holds.
  \begin{sentences}\itemsep-2pt
  \item\label{vernet2:typ1}
    $L_1 = a+ m$, $L_2 = b + m$ for $a\neq b$ and a line $m$ of\/
    ${\goth M}_0$; and
    $K = x + \LineOn(a,b)$ for some $x \in  m$ or
    $K = 2m$, when $a,b \in m$.
  \item\label{vernet2:typ2}
    $L_1 = 2n$, $L_2 = c+n$, $K = x + \LineOn(x,c)$
    for $x \in n$, $x \neq c$.
  \item\label{vernet2:typ3}
    $L_1 = a + m_1$, $L_2 = b + m_2$ for $a \neq b$ and 
    distinct lines $m_1,m_2$ of\/ ${\goth M}_0$ that share a point $c$.
    Then $K = 2 \LineOn(a,b)$ or $K = c + \LineOn(a,b)$.
  \end{sentences}
\end{lem}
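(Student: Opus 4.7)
The plan is to combine the quadrangle classification of Lemma~\ref{lem:vernet1} with an enumeration of the possible forms of $K$ permitted by the definition of lines in $\goth M$. Since $k = 2$, definition~\eqref{def:verbloki} forces every line of $\goth M$ to be either of the form $e + B$ with $e$ a point of ${\goth M}_0$ and $B$ a line of ${\goth M}_0$ (so $\topof(e+B)$ is the leaf indexed by $e$), or of the form $2B$ (in which case all such lines share one common leaf, indexed by the empty multiset). The hypothesis that $\topof(K)$ differs from both $\topof(L_1)$ and $\topof(L_2)$ will then let us prune many branches.

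First I would invoke Lemma~\ref{lem:vernet1} on a quadrangle carrying $L_1, L_2$ as opposite sides. Up to relabelling, either the quadrangle has type~\eqref{vernet1:typ1}, in which case $(L_1, L_2)$ is one of its two pairs of opposite sides and (after renaming $m \leftrightarrow n$ if necessary) takes the form $(a + m, b + m)$ with $a, b \in n = \LineOn(a, b)$, yielding case~\eqref{vernet2:typ1}; or it has type~\eqref{vernet1:typ2}, and then $(L_1, L_2)$ is either the pair $(2n, c + n)$, yielding case~\eqref{vernet2:typ2}, or the pair $(a + m_1, b + m_2)$ with $c \in m_1 \cap m_2$, yielding case~\eqref{vernet2:typ3}.

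For each case I would write $K$ as $e + B$ or $2B$ and translate the two meeting conditions with $L_1, L_2$ into multiset equations on points of ${\goth M}_0$. Case~\eqref{vernet2:typ1} serves as the template: if $K = e + B$, then $e + z = a + x$ for some $x \in m$, $z \in B$ forces either $e = a$ (excluded by $\topof(K) \neq \topof(L_1)$) or $e \in m$ and $a \in B$; the symmetric analysis on $L_2$ yields $b \in B$, so linearity of ${\goth M}_0$ identifies $B$ with $\LineOn(a, b) = n$, whence $K = x + n$ with $x \in m$. If instead $K = 2B$, then $2z = a + x$ forces $a = x = z \in m \cap B$ and symmetrically $b \in m \cap B$; combined with $a, b \in n$, uniqueness of the line through two points forces $n = m$, hence $B = m$ and $K = 2m$. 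Case~\eqref{vernet2:typ2} is handled the same way: the $K = 2B$ branch is killed at once by $\topof(K) = \topof(2n) = \topof(L_1)$, and unfolding $e + z = 2w$ and $e + z' = c + y$ (discarding the branch $e = c$, since $\topof(L_2) = c$) pins $K$ down to $x + \LineOn(x, c)$ with $x \in n$, $x \neq c$. Case~\eqref{vernet2:typ3} is analogous: the $K = e + B$ branch forces $e \in m_1 \cap m_2 = \{c\}$ and $a, b \in B$, giving $K = c + \LineOn(a, b)$, while the $K = 2B$ branch yields $a \in m_1 \cap B$ and $b \in m_2 \cap B$, so $B = \LineOn(a, b)$ and $K = 2\LineOn(a, b)$.

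The main subtlety is to be systematic about unfolding the multiset equations $e + z = a + x$ and $2z = a + x$: the first admits two solution branches and the second only one, and one has to prune exactly those branches that force $\topof(K) \in \{\topof(L_1), \topof(L_2)\}$. Beyond that, linearity of ${\goth M}_0$ is used throughout to pin down $B$ uniquely from two points it is forced to contain, and the partial-linearity fact that two distinct lines meet in at most one point is what restricts $e$ to the unique intersection point $c$ in case~\eqref{vernet2:typ3}.
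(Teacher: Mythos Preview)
Your argument is correct and is precisely the ``direct (though quite tedious) verification'' the paper invokes in lieu of a written proof for this lemma: reduce $(L_1,L_2)$ via Lemma~\ref{lem:vernet1}, then exhaust the two possible shapes $K=e+B$ and $K=2B$ by solving the multiset equations and pruning with the $\topof$ hypothesis. There is nothing to compare against, since the paper gives no explicit proof here.
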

Finally, gathering together the possibilities listed in 
\ref{lem:vernet1} and \ref{lem:vernet2}
we conclude with
\begin{prop}\label{prop:ver2netax}
  If\/ $K_1,K_2$ are two lines of\/ $\goth M$ which cross two other
  lines $L_1,L_2$ so as $L_1,K_1,L_2,K_2$ is a proper quadrangle,
  $L_3$ crosses $K_1,K_2$, and $K_3$ crosses $L_1,L_2$
  then $K_3,L_3$ share a point.
\end{prop}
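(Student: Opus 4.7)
The approach is a direct case analysis built on the two preceding classification lemmas. By Lemma~\ref{lem:vernet1}, the proper quadrangle $(L_1,K_1,L_2,K_2)$ has one of the two normal forms \ref{vernet1:typ1} or \ref{vernet1:typ2}; for each shape, Lemma~\ref{lem:vernet2} pins down the possible forms of the transversal lines $L_3$ (crossing $K_1,K_2$) and $K_3$ (crossing $L_1,L_2$). In each resulting case I will exhibit a concrete common point of $L_3$ and $K_3$ by multiset arithmetic.

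Suppose the quadrangle is of form \ref{vernet1:typ1}, so $L_i=c_i+m$, $K_i=d_i+n$ with $c_1,c_2\in n$ and $d_1,d_2\in m$; distinctness of the four leaves forces $m\neq n$. Applying Lemma~\ref{lem:vernet2}(\ref{vernet2:typ1}) to the opposite pair $K_1,K_2$ (with $\LineOn(d_1,d_2)=m$) gives $L_3=x+m$ for some $x\in n$; symmetrically, $K_3=y+n$ for some $y\in m$. Then the multiset $x+y$ is a common point, since $y\in m$ places $x+y$ on $x+m=L_3$ and $x\in n$ places it on $y+n=K_3$. In the degenerate sub-case $x=y$ we have $x\in m\cap n$ and the common point is $2x$.

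Suppose instead the quadrangle is of form \ref{vernet1:typ2}, so $L_1=a+m$, $L_2=b+l$, $K_1=2n$, $K_2=c+n$ with $a,b\in n$, $a,c\in m$, $b,c\in l$. Applying Lemma~\ref{lem:vernet2}(\ref{vernet2:typ3}) to the opposite pair $L_1,L_2$ (distinct directions $m,l$ meeting at $c$, with $\LineOn(a,b)=n$) forces $K_3\in\{2n,c+n\}=\{K_1,K_2\}$. Hence $K_3$ coincides with a side of the quadrangle, and since $L_3$ crosses both $K_1$ and $K_2$ by hypothesis, $L_3\cap K_3\neq\emptyset$ follows at once.

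The main obstacle is the bookkeeping of residual sub-cases in which the topology hypothesis of Lemma~\ref{lem:vernet2} fails, i.e.\ where $L_3$ or $K_3$ accidentally shares its leaf with a side of the quadrangle. There one uses the fact that two distinct leaves of $\VerSpace(2,{\goth M}_0)$ meet in exactly one point: this pins down the intersection of $L_3$ with whichever $K_j$ lies in a different leaf, and the required common point with $K_3$ can then be read off by direct inspection in the offending leaf.
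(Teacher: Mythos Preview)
Your approach is exactly the paper's: the paper's entire proof is the single phrase ``gathering together the possibilities listed in \ref{lem:vernet1} and \ref{lem:vernet2}'', and you have carried out that case split explicitly. Your treatment of the two main shapes is correct; in particular the observation in shape~\eqref{vernet1:typ2} that Lemma~\ref{lem:vernet2}\eqref{vernet2:typ3} forces $K_3\in\{K_1,K_2\}$, so that nothing needs to be said about $L_3$, is clean and right.

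The one place where you over-promise is the last paragraph. The residual sub-cases in shape~\eqref{vernet1:typ1} where $\topof(L_3)$ coincides with $\topof(K_1)$ or $\topof(K_2)$ are \emph{not} always salvageable by inspection. Concretely: take $m\neq n$ meeting at $o$, put $a_2=o$, choose $b_2\in m\setminus\{o\}$ and a third line $p\neq m,n$ through $b_2$, and set $L_3=o+p$; then $L_3$ meets $K_1=o+n$ (inside the leaf $o+S$) and $K_2=b_2+n$ (at $o+b_2$), yet for $K_3=y+n$ with $y\in m\setminus\{o,b_2\}$ one checks $L_3\cap K_3=\emptyset$, since the only candidate $o+y$ fails $y\in p$. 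So the proposition, read literally, is not true in these boundary cases. The paper does not treat them either and signals this with the comment ``loosely (and not really strictly) speaking'' immediately after the statement; the intended reading carries over the hypothesis $\topof(L_3)\neq\topof(K_i)$, $\topof(K_3)\neq\topof(L_i)$ from Lemma~\ref{lem:vernet2}, under which your main-case argument is already a complete proof. I would simply drop the final paragraph and note instead that you are invoking Lemma~\ref{lem:vernet2} with its leaf-distinctness hypothesis in force.
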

Loosely (and not really strictly) speaking: 
$\VerSpace(2,{\goth M}_0)$ satisfies the Net Axiom.

\section{Hyperplanes in Veronese spaces}

Main problems of this paper concentrates around `affinizations' of Veronese structures.
These problems concern the question: {\em what are hyperplanes in Veronese structures, if
they exist}?

A set $X$ of points of a \plsX\ ${\goth M} = \struct{S, \lines}$ is {\em l-transversal}
if it meets every line of $\goth M$. 
Clearly, $S$ is l-transversal; a proper l-transversal subspace is
called a {\em hyperplane}.

A subspace $X$ of a partial linear space is called {\em \kolczaty}\ when 
through each point on $X$ there	goes a line that is not contained in $X$ (so, it meets $X$ 
in the given point only), and $X$ is {\em \luskwiaty}\ when through each line contained
in $X$ there passes a plane not contained in $X$.

A hyperplane $\hipa$ of a \plsX\ $\goth M$ determines a parallelism $\parallel_{\hipa}$ on the
lines not contained in $\hipa$ defined by the formula
$$
  L_1 \parallel_{\hipa} L_2 \iff L_1\cap\hipa = L_2\cap\hipa.
$$
Set 
  $\lines^\propto = \{ L\setminus\hipa\colon L\in\lines,\; L\nsubseteq\hipa\}$.
Recall that we have assumed $|L|\geq 3$ for every $L\in\lines$. Then each 
$l\in\lines^\propto$ uniquely determines $\overline{l}\in\lines$ such that
$l\subset\overline{l}$ and it makes sense to define the parallelism $\parallel_{\hipa}$
on $\lines^\propto$ by the condition
$l_1 \parallel_{\hipa}l_2 \iff \overline{l_1} \parallel_{\hipa}\overline{l_2}$.
Let us write
$$
  {\goth M}\setminus\hipa = \struct{S\setminus\hipa,\lines^\propto,\parallel_{\hipa}}.
$$
Then ${\goth M}\setminus\hipa$ is a \paplsX.
Note that it is not necessarily an \aplsX.
If $\hipa$ is \kolczaty\ then the points of $\hipa$ can be interpreted in terms of
${\goth M}\setminus\hipa$ as the equivalence classes of the parallelism $\parallel_{\hipa}$
(comp.\ definitions in \cite{segr2afin}).

\iffalse
Finally, recall that the natural parallelism of an affine space coincides with the relation
(so-called {\em Veblen parallelism}) $\veblparal$ defined by the formula
\begin{multline}\label{def:veblparal}
  L_1 \veblparal L_2 \quad \iff \quad L_1 = L_2  \Lor \big( L_1 \not\adjac L_2  %%\;\;\&
  \\
  \&\; \text{ there are two lines } L', L'' 
  \text{ through a point } p 
  \\
  \text { such that } 
  p \not\inc L_1, L_2 \land L_1,L_2 \adjac L',L' %%\Land L_1 \not\adjac L_2.
  \\
  \& \text{ there are collinear points }
  a_1 \in L_1 \cap L', \; a_2 \in L_2 \cap L'' \big).
\end{multline}
%
Intuitively speaking: $L_1\veblparal L_2$ when $L_1,L_2$ are on a plane and either
coincide or have no common point.

In most parts (e.g.\ in the case of affine spaces and their Segre products,
cf.\ \cite{segr2afin}) \eqref{def:veblparal} is equivalent to a simple formula 
with the condition
{\em there are collinear points
  $a_1 \in L_1 \cap L', \; a_2 \in L_2 \cap L''$}
on its right-hand side omitted.
Here, we must handle with this more complex formula since
in a Veronese space not every triangle determines a plane.
\fi
\iffalse
In what follows we shall need this relation, possibly slightly modified, since in a 
Veronese space not every triangle determines a plane.\warning[in a final version exactly
a suitable modification should be formulated (if necessary), and the comment should state
that it is a modification of \cite{segr2afin}.	]
\fi

Let ${\goth M}_0 = \struct{S,\lines_0}$ be a partial linear space;
let ${\goth M} = \VerSpace(k,{\goth M}_0)$ and
${\goth M}^\ast	= \VerSpace(k,S)$ be the structure of the leaves of $\goth M$.
Let us pass to our main goal: determine the hyperplanes in $\goth M$. 

The first solution that comes to mind, though natural, is defective:
\begin{rem}
  If\/ $\hipa_0$ is a hyperplane of\/ ${\goth M}_0$, then, clearly,
  the set $\msub_k(\hipa_0)$ is a subspace of\/ $\goth M$ but it is 
  not l-transversal for $k>1$.
\end{rem}
\begin{proof}
  Let $L\in\lines_0$ be not contained in $\hipa_0$, and
  $b\in L$ such that $b\notin\hipa_0$.
  Then $(b+L)\cap\msub_k(\hipa_0) = \emptyset$.
\end{proof}

Let $\hipa$ be a hyperplane of $\goth M$. Then, for each leaf 
$S' = e + (k-|e|)S$ the intersection $S'\cap\hipa$ is an l-transversal 
subspace of $S'$, so it is determined by an l-transversal subspace
$\hipa_e$ of ${\goth M}_0$. Write $\hipy$ for the set of all the hyperplanes
of ${\goth M}_0$. So, $\hipa$ determines 
via the formula
$\hipfun(e) = \{ x\in S\colon  e+(k-|e|)x \in \hipa\}$
a function
\begin{equation}\label{def:h}
  \hipfun\colon\wsub_k(S) \longrightarrow \hipy\cup \{S \}
\end{equation}
such that 
\begin{equation}\label{def:h2hipa}
  \hipa = \bigcup\{ e + (k-|e|) \hipfun_e\colon e \in\wsub_k(S) \}
  \quad(\text{we write } \hipfun_e = \hipfun(e)).
\end{equation}
Moreover, $\hipa$ is an l-transversal set in ${\goth M}^\ast$.
The following is a standard exercise.
\begin{lem}
  For every function as in \eqref{def:h} the set $\hipa$ defined by
  \eqref{def:h2hipa} is l-transversal in $\goth M$.
\end{lem}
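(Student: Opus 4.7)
The plan is to take an arbitrary line $L$ of $\goth M$, locate the unique leaf containing it, and use that the value $\hipfun_e$ at the index $e$ of that leaf is by hypothesis either all of $S$ or an l-transversal set of ${\goth M}_0$.

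By definition \eqref{def:verbloki}, any line $L \in \bloki^\upcircled{k}$ has the form $L = \{e + rx \colon x \inc B\}$ for some $B \in \lines_0$, some $r$ with $0 < r \leq k$, and some $e \in \msub_{k-r}(S)$. In particular $|e| = k-r < k$, so $e$ belongs to $\wsub_k(S)$ and therefore does appear as an index in the union \eqref{def:h2hipa}. Moreover $L$ lies in the leaf $e + (k-|e|)S = e + rS$, which under the map $x \longmapsto e + rx$ is an isomorphic copy of ${\goth M}_0$, and under this copy $L$ corresponds to the block $B$ of ${\goth M}_0$ while the piece $e + r\hipfun_e$ of $\hipa$ contained in this leaf corresponds to $\hipfun_e$.

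Now I split on $\hipfun_e$. If $\hipfun_e = S$, then the summand $e + r\hipfun_e = e + rS$ of \eqref{def:h2hipa} already contains the whole leaf, hence $L \subseteq \hipa$ and trivially $L \cap \hipa \neq \emptyset$. Otherwise $\hipfun_e \in \hipy$, so $\hipfun_e$ is an l-transversal subspace of ${\goth M}_0$; in particular $\hipfun_e \cap B \neq \emptyset$, and choosing any $x \in \hipfun_e \cap B$ gives $e + rx \in (e + r\hipfun_e) \cap L \subseteq \hipa \cap L$. In either case $L$ meets $\hipa$, which is exactly l-transversality.

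There is no real obstacle; the statement is a direct unwinding of the definitions, as the paper flags by calling it a standard exercise. The only point that requires a moment of care is verifying that the parameter $e$ of an arbitrary line automatically lies in the index set $\wsub_k(S)$ of \eqref{def:h}, which is immediate from $r \geq 1$.
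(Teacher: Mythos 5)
Your proof is correct and is precisely the ``standard exercise'' the paper omits: every line $e+rB$ lies in the single leaf indexed by $e\in\wsub_k(S)$, and there it meets $e+r\hipfun_e\subseteq\hipa$ because $\hipfun_e$ is either all of $S$ or an l-transversal subspace of ${\goth M}_0$. Nothing is missing; the observation that $r\geq 1$ forces $|e|<k$, so that $e$ is a legitimate index for \eqref{def:h}, is the only point needing care and you handle it.
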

%
\iffalse
One can give some ``combinatorial" answers to the above problems, but it
is not the subject of the  paper to analyze the obtained solutions in their
full generality.
\fi
In what follows we shall give a series of  interesting (we believe) 
examples of hyperplanes in  Veronese spaces associated with `classical'
geometries.

\subsection{First example: hyperplanes in Veronese spaces associated with
projective spaces}

At the beginning of this part let us recall properties characteristic for projective and
affine spaces. The  projective spaces are the veblenian linear spaces.
Affine spaces satisfy {\em the Tamaschke Bedingung}
({\em if a line parallel to one side of a triangle crosses a second side
then it crosses the third side as well}) and {\em Parallelogram Completion Condition}
({\em if  of two pairs of parallel lines three intersections of lines in pairs
of non-parallel lines exist, then the fourth intersection point exists as well}).

Let $\varkappa$ be a 
%%(possibly degenerate)\warning[jak to zwa w tym swiatku???] 
quasi-correlation in a projective space 
${\fixproj} = \struct{S,\lines}$ over a field with odd characteristic.
That means, there is a non-zero reflexive form $\xi$ on the vector space $\field V$
coordinatizing $\fixproj$ such that 
$\gen{u}\in\varkappa(\gen{v})$ is equivalent to $\xi(u,v) = 0$
for any non-zero vectors $u,v$ of $\field V$.
%% And then every set $\varkappa(\gen{u})$ is either the hyperplane of $\fixproj$
Consider ${\goth M} = \VerSpace(2,{\fixproj})$ 
(the primary example of a Veronese space: the Veronese variety, cf.\ \cite{talin:vero})
and define a function $\hipfun$
on $\wsub_2(S)$ as required in \eqref{def:h}.
%%  as follows (as usual, when $x\in S$ we identify $x$ and $1\cdot x$):
Firstly, we set 
\begin{equation}\label{def:h-x}
  \hipfun(x) = \varkappa(x)\;\text{ for } x \in S.
\end{equation}
In accordance with \eqref{def:h}, either $\hipfun(0) = S$ or
$\hipfun(0) = h_0$ for some hyperplane $h_0$ of $\fixproj$.
\begin{lem}\label{lem:vari1}
  Suppose that $\hipfun(0) = h_0$ and assume that 
  $h_0 \nsubseteq\{ a\colon a\in\varkappa(a) \}$.
  Let\/ $\hipa$ be defined by \eqref{def:h2hipa}.
  Then $\hipa$ {\em is not} a subspace of\/ $\goth M$.
\end{lem}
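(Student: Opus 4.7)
The plan is to exhibit a single line $L$ of $\goth M$ that meets $\hipa$ in at least two points yet has a third point outside $\hipa$, which immediately contradicts the subspace property. First I would unpack what $\hipa$ looks like for $k=2$: by \eqref{def:h2hipa} together with the prescription \eqref{def:h-x}, the ``big'' leaf $0+2S$ intersects $\hipa$ in $\{2z:z\in h_0\}$, and each leaf $y+S$ intersects $\hipa$ in $\{y+x:x\in\varkappa(y)\}$. Reflexivity of $\xi$ ensures that the two leaves through a mixed point $y+z$ (with $y\neq z$) give consistent answers, so that the membership test collapses to the symmetric criterion $y+z\in\hipa\iff\xi(y,z)=0$; for a doubled point $2z$, the sufficient condition $z\in h_0$ is all I will need.

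For the witness, I would use the hypothesis to pick $a\in h_0$ with $a\notin\varkappa(a)$, i.e.\ $\xi(a,a)\neq 0$. Fix any line $B$ of $\fixproj$ through $a$; such a line exists and has at least three points. Since $\varkappa(a)$ is a hyperplane of $\fixproj$ not containing $a$, the intersection $B\cap\varkappa(a)$ is a single point $c$, necessarily distinct from $a$. Define $L:=\{a+x:x\in B\}$; by \eqref{def:verbloki} (with $r=1$ and $e=1\cdot a$) this is a line of $\goth M$, lying in the leaf $a+S$.

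Now $L$ meets $\hipa$ in at least two points: the doubled point $2a\in L$ (the value at $x=a$) lies in $\hipa$ because $a\in h_0$, and the mixed point $a+c$ lies in $\hipa$ because $c\in\varkappa(a)$. On the other hand, since $|B|\geq 3$, I can pick $x\in B\setminus\{a,c\}$; then $x\neq a$ and $x\notin\varkappa(a)$, so $\xi(a,x)\neq 0$ and, by the symmetric criterion above, the mixed point $a+x$ is not in $\hipa$. Hence $L\not\subseteq\hipa$ while $|L\cap\hipa|\geq 2$, showing that $\hipa$ is not a subspace.

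The only mildly delicate step is the symmetric reformulation at the start: one must verify that the two leaf-based descriptions $y+\varkappa(y)$ and $z+\varkappa(z)$ of a mixed point $y+z$ agree, reducing membership in $\hipa$ to the single condition $\xi(y,z)=0$; this follows from reflexivity of $\xi$. Everything else is direct verification of membership.
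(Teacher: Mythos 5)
Your proof is correct and takes essentially the same approach as the paper's: both work with a line $a+B$ in the leaf $a+S$ through a point $a\in h_0\setminus\varkappa(a)$, observe that $2a$ and $a+c$ (where $c\in B\cap\varkappa(a)$) lie in $\hipa$, and use reflexivity of $\xi$ to conclude that the points $a+x$ with $x\notin\varkappa(a)$ do not. The only cosmetic difference is that you exhibit the third point outside $\hipa$ directly, whereas the paper phrases the same fact as a contradiction from assuming $a+\LineOn(a,q)\subseteq\hipa$.
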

\begin{proof}
  Let $a\in h_0$, $a\notin \varkappa(a)$, and let 
  $q \in \varkappa(a)\setminus h_0$. Then 
  $2a, a+q \in a + \LineOn(a,q)\cap\hipa$.
  Suppose $a + \LineOn(a,q) \subseteq\hipa$, then there exists a point 
  $q'\in\LineOn(a,q)$, $q\neq q'$ with $q'\in \varkappa(a)$, so (contradictory)
  $a \in \varkappa(a)$.
\end{proof}
Accordingly, to `extend' $\varkappa$ to a function $\hipfun$ that determines a 
hyperplane in $\goth M$ we must put
\begin{equation} \label{def:h-0}
  \hipfun(0 ) = S.
\end{equation}
%
%% where $h_0$ is {\em any} hyperplane of $\goth P$.
Let $\hipa$ be defined by \eqref{def:h2hipa}.
\begin{lem}
  If $\varkappa$ is symplectic (all the points of\/ $\fixproj$  %%satisfy
  are selfconjugate), then\/ $\hipa$ determined by a function\/ $\hipfun$
  defined in \eqref{def:h-x}, \eqref{def:h-0} coincides with\/
  $\bigcup\{ x + \hipfun(x)\colon x \in S \}$.
\end{lem}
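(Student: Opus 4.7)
The plan is to simply unfold the definition \eqref{def:h2hipa} in the special case $k=2$, read off the two kinds of contributions, and then observe that the symplectic hypothesis makes one kind redundant.

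First I would note that for $k=2$ the index set $\wsub_2(S)$ decomposes as $\msub_0(S)\cup\msub_1(S)$, i.e.\ it consists of the empty multiset $0$ (with $|0|=0$) and the singletons $1\cdot x$ for $x\in S$ (with $|1\cdot x|=1$). Substituting into \eqref{def:h2hipa} gives
\begin{equation*}
  \hipa \;=\; \bigl(0 + 2\hipfun(0)\bigr) \;\cup\; \bigcup_{x\in S}\bigl(x + 1\cdot\hipfun(x)\bigr)
       \;=\; 2\hipfun(0) \;\cup\; \bigcup_{x\in S}\bigl(x+\hipfun(x)\bigr).
\end{equation*}
By the choice \eqref{def:h-0} we have $\hipfun(0)=S$, so the first summand is $2S=\{2x\colon x\in S\}$, while by \eqref{def:h-x} the second summand is $\bigcup_{x\in S}(x+\varkappa(x))$.

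Next, I would use the symplectic assumption: every point of $\fixproj$ is selfconjugate, so $x\in\varkappa(x)$ for every $x\in S$. Hence $2x=x+x\in x+\varkappa(x)$, which shows
\begin{equation*}
  2S \;\subseteq\; \bigcup_{x\in S}\bigl(x+\varkappa(x)\bigr) \;=\; \bigcup_{x\in S}\bigl(x+\hipfun(x)\bigr).
\end{equation*}
Therefore the first summand in the decomposition of $\hipa$ is absorbed by the second, and we obtain $\hipa=\bigcup\{x+\hipfun(x)\colon x\in S\}$, as required.

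There is essentially no obstacle here: the argument is a bookkeeping exercise once one recognises that the only role of the condition $\hipfun(0)=S$ is to force $2S\subseteq\hipa$, and that selfconjugacy of every point pushes each $2x$ into the already-present set $x+\varkappa(x)$. The mild subtlety to double-check is that the decomposition $\wsub_2(S)=\{0\}\cup\msub_1(S)$ genuinely exhausts the leaves of $\VerSpace(2,\fixproj)$ so that no contribution to $\hipa$ is missed.
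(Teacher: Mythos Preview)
Your proof is correct and follows essentially the same approach as the paper: both arguments observe that the contribution from $e=0$ is $2S$ and then use selfconjugacy ($x\in\varkappa(x)$) to get $2x\in x+\hipfun(x)$, absorbing $2S$ into the union over $x\in S$. You are simply more explicit about the decomposition of $\wsub_2(S)$ than the paper, which compresses the argument to a single sentence.
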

\begin{proof}
  Evident, as $x\in\varkappa(x)$ yields, in accordance with the definition,
  $2x = x+x\in\hipa$, for each point $x$ of $\fixproj$.
\end{proof}
  %
\iffalse
From definition of $\hipa$ it follows that
\begin{ctext}
  $x+y \in \hipa$ iff $x \in \varkappa(y)$, for all distinct $x,y\in S$,
    \quad
  $2x \in \hipa$  is a tautology.
\end{ctext}
\fi
%
\begin{lem}\label{lema:symplect2hipa}
  If $\varkappa$ is symplectic, then\/
  $\hipa$  is a proper subspace of\/ $\goth M$     
  and therefore it is a hyperplane of\/ $\goth M$.
\end{lem}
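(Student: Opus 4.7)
By the preceding lemma, the symplectic assumption yields the convenient description
$$
  \hipa \;=\; \bigcup\{x + \varkappa(x)\colon x\in S\},
$$
so a point of $\goth M$ of the form $x+y$ (with $x,y\in S$, possibly equal) lies in $\hipa$ precisely when $y\in\varkappa(x)$, which by reflexivity of the form $\xi$ is the same as $x\in\varkappa(y)$. L-transversality of $\hipa$ is already guaranteed by the previous lemma on functions of type \eqref{def:h}, so the two things to check are that $\hipa$ is a subspace and that it is proper.

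For the subspace property, I would inspect the two types of lines of ${\goth M}=\VerSpace(2,\fixproj)$: the leaf-lines of shape $2L$ (with $L\in\lines$), and the lines of shape $x+L$ (with $x\in S$, $L\in\lines$). Lines of the first type are automatically contained in $\hipa$, because for every $y\in L$ the symplectic hypothesis gives $y\in\varkappa(y)$, hence $2y=y+y\in y+\varkappa(y)\subseteq\hipa$. For a line $x+L$, a point $x+y$ belongs to $\hipa$ exactly when $y\in\varkappa(x)$. So if two distinct points $x+y_1,\, x+y_2$ on $x+L$ lie in $\hipa$, then $L\cap\varkappa(x)$ contains two distinct points $y_1,y_2$; but $\varkappa(x)$ is a hyperplane of the projective space $\fixproj$, and a line of a projective space meeting a hyperplane in two points lies in it, so $L\subseteq\varkappa(x)$ and hence $x+L\subseteq\hipa$.

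For properness, the form $\xi$ is by hypothesis non-zero, so there exist vectors $u,v$ with $\xi(u,v)\neq 0$; writing $x=\langle u\rangle$, $y=\langle v\rangle$, we have $y\notin\varkappa(x)$ and, by reflexivity, $x\notin\varkappa(y)$, so the point $x+y$ lies in no set of the form $z+\varkappa(z)$ (since any decomposition $x+y=z+w$ as a multiset forces $z\in\{x,y\}$). Hence $x+y\notin\hipa$, and $\hipa\neq\msub_2(S)$.

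The reasoning is essentially routine; the only place where one must be careful is the reading of the condition ``$x+y\in\hipa$''. Because $x+y$ as a multiset sits simultaneously in the two leaves indexed by $x$ and by $y$, the membership in $\hipa$ has to be checked to be leaf-independent, and this is exactly what the reflexivity of $\xi$ provides. Once that is noted, the subspace check for lines $x+L$ reduces to the elementary fact that a hyperplane of a projective space meets every line either in one point or in all of it, and the whole argument goes through.
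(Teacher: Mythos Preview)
Your argument is correct and follows essentially the same line as the paper's proof. The paper also reduces to checking a line $a+L_0$ with two points $a+x_1,a+x_2$ in $\hipa$ and uses that $x_1,x_2\in\varkappa(a)$ forces $L_0\subseteq\varkappa(a)$; the only cosmetic difference is that the paper splits off the sub-case $a=x_1$ (i.e.\ one of the two points is $2a$) explicitly, whereas your clean membership criterion ``$x+y\in\hipa\iff y\in\varkappa(x)$'' absorbs that case automatically via $x\in\varkappa(x)$. Your explicit treatment of the lines $2L$ and of properness is a welcome addition---the paper takes both as evident.
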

\begin{proof}
%%    It suffices to show that $\hipa$ is a subspace of $\goth M$.
  Let $L = a +L_0$ with $L_0\in\lines$; suppose
  $|L\cap \hipa|\geq 2$.
  Then there are at least two points $a+x_1,a+x_2$ in $L\cap\hipa$
  and either $a\neq x_1,x_2$ and thus $x_1,x_2\in\varkappa(a)$,
  which gives $L_0\subseteq\varkappa(a)$. 
  Consequently, $L\subseteq\hipa$.
  Or $a=x_1\neq x_2$ and then $x_2\in\varkappa(a)$;
  if $\varkappa$  symplectic then $x_1\in\varkappa(a)$ as
  well and the claim follows.
\iffalse
 such that one of the following holds
    %
    \begin{enumerate}[a)]\itemsep-2pt
    \item
      $x_1,x_2\in\varkappa(a)$; then $L_0\subseteq\varkappa(a)$ 
    \item
      $a\in \varkappa(x_1),\varkappa(x_2)$; then $a\in\varkappa(z)$
      for every $z\in L_0$ and thus $a+z\in\hipa$.
    \end{enumerate}
    %
    Let $L = 2L_0$. Then $2 x_1, 2 x_2 \in \hipa$ for at least two
    $x_1,x_2$ and thus $L_0\subseteq h_0$.
\fi
\end{proof}
Now, we are in a position to characterize all the hyperplanes in Veronese
spaces of level two associated with projective spaces.
\begin{thm}\label{thm:hipaINprojver}
  The set\/ $\hipa$ is a hyperplane in $\VerSpace(2,\fixproj)$ iff\/ $\hipa$ is defined by
  \eqref{def:h2hipa}, where\/ $\hipfun$ is defined by \eqref{def:h-x}, \eqref{def:h-0} for
  some symplectic quasi-correlation $\varkappa$ in\/ $\fixproj$.
\end{thm}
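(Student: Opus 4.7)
The $(\Leftarrow)$ direction is already assembled: Lemma~\ref{lema:symplect2hipa} shows that $\hipa$ built from a symplectic $\varkappa$ via \eqref{def:h-x}, \eqref{def:h-0}, \eqref{def:h2hipa} is a proper subspace of $\goth M = \VerSpace(2,\fixproj)$, and the l-transversality lemma preceding \ref{lem:vari1} shows $\hipa$ meets every line. For $(\Rightarrow)$, let $\hipa$ be a hyperplane of $\goth M$. The discussion preceding the theorem produces a function $\hipfun\colon\wsub_2(S)\to\hipy\cup\{S\}$; for $k=2$ this amounts to $\hipfun(0)$ together with the values $\hipfun(x)$ for $x\in S$. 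I set $\varkappa(x):=\hipfun(x)$ for $x\in S$.

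The key combinatorial lever is that a weight-$2$ multiset lies in two distinct leaves: $a+b$ sits in both $a+S$ and $b+S$, while $2x$ sits in both $0+2S$ and $x+S$. Reading off membership in $\hipa$ from both leaves yields the symmetry $b\in\varkappa(a)\iff a\in\varkappa(b)$ (so $\varkappa$ is reflexive) and the identity $x\in\hipfun(0)\iff x\in\varkappa(x)$. Next I would establish the collineation property $\varkappa(a)\cap\varkappa(b)\subseteq\varkappa(c)$ for any three points $a,b,c$ collinear in $\fixproj$: given $d\in\varkappa(a)\cap\varkappa(b)$, symmetry gives $a,b\in\varkappa(d)=\hipfun(d)$, so the line $d+\LineOn(a,b)$ of $\goth M$ contains the two distinct points $d+a,d+b\in\hipa$, and the subspace condition forces $d+c\in\hipa$, whence $d\in\varkappa(c)$. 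A classical theorem on reflexive collineations between a projective space and its dual then yields a non-zero reflexive sesquilinear form $\xi$ on the coordinatizing vector space such that $\varkappa$ is the quasi-correlation associated with $\xi$.

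It remains to show $\hipfun(0)=S$, which by the identity above is equivalent to $\varkappa$ being symplectic. Lemma~\ref{lem:vari1} immediately rules out every alternative in which $\hipfun(0)=h_0$ is a proper hyperplane not contained in $\{a\in S: a\in\varkappa(a)\}$, and the identity $x\in\hipfun(0)\iff x\in\varkappa(x)$ then pins the only remaining possibility to $h_0=\{a:a\in\varkappa(a)\}$. In odd characteristic the self-conjugate locus of a reflexive form is a quadric, and it is a hyperplane only in the degenerate rank-one symmetric case $\xi=\ell\otimes\ell$. Excluding this last configuration — which requires a finer subspace-style argument on lines of $\goth M$ bridging $h_0$ and its complement — is the main obstacle of the proof; once it is closed, the identity collapses to $\hipfun(0)=S$, hence $\varkappa$ is symplectic, and $\hipa$ is recovered via \eqref{def:h2hipa} from the data \eqref{def:h-x}, \eqref{def:h-0} associated with $\varkappa$.
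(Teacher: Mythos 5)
Your route is essentially the paper's: the same $(\Leftarrow)$ via \ref{lema:symplect2hipa} together with the l-transversality lemma, and for $(\Rightarrow)$ the same symmetric relation $x\perp y\iff x+y\in\hipa$, the same passage to a reflexive sesquilinear form (the paper simply cites \cite{muzal} where you invoke the fundamental theorem on correlations --- your explicit check that $d\in\varkappa(a)\cap\varkappa(b)$ forces $d\in\varkappa(c)$ for every $c\in\LineOn(a,b)$, via the line $d+\LineOn(a,b)$, is a welcome detail the paper leaves implicit), and the same final reduction to the value of $\hipfun(0)$.

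The proof is nevertheless incomplete, and at the one point that actually matters. You correctly isolate the residual case --- a symmetric form of rank one, whose selfconjugate locus is the hyperplane $h_0=\{\gen{u}\colon\ell(u)=0\}$ --- but you only promise that ``a finer subspace-style argument'' will exclude it. It will not: for $\xi=\ell\otimes\ell$ the set $\hipa=\{x+y\colon x\in h_0\text{ or }y\in h_0\}$ \emph{is} a proper l-transversal subspace of $\VerSpace(2,\fixproj)$. Indeed, a line $a+L_0$ with $a\notin h_0$ meets it exactly in $a+(L_0\cap h_0)$ (a single point if $L_0\nsubseteq h_0$, the whole line otherwise), a line $q+L_0$ with $q\in h_0$ lies entirely inside it, and a line $2L_0$ meets it exactly in $2(L_0\cap h_0)$; yet $2a\notin\hipa$ for $a\notin h_0$, so $\hipfun(0)=h_0\neq S$ and this $\varkappa$ is not symplectic. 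So this is not a gap you can close: either the classification must admit the rank-one family as a second type of hyperplane, or an extra hypothesis must be imposed. For what it is worth, the paper's own proof is thin at exactly the same spot: after invoking \ref{lem:vari1} it asserts that ``in both cases we conclude with $2S\subseteq\hipa$'', but in the second case ($\hipfun(0)=h_0$ a hyperplane contained in the selfconjugate locus) that conclusion is precisely what the rank-one example violates, and \ref{lem:vari1} gives no information there because its hypothesis $h_0\nsubseteq\{a\colon a\in\varkappa(a)\}$ fails. Your write-up at least makes the missing step visible; the next move should be to test the rank-one candidate directly rather than to look for an argument ruling it out.
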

\begin{proof}
  The right-to-left implication follows directly from \ref{lema:symplect2hipa}.
  Let $\hipa$ be a hyperplane of $\VerSpace(2,\fixproj)$.
%%  let $\fixproj$ be defined on the 1-subspaces of a vector space $\field V$.
  Consider the binary relation $\perp$ on the points of $\fixproj$ defined by the condition
  $x \perp y \iff x+y \in \hipa$. Clearly, $\perp$ is symmetric. 
  Set $\hipfun(x) = \{ y\colon x \perp y \}$ for each point $x$ of $\fixproj$.
  By definition, 
  $x + \hipfun(x) = (x+S)\cap\hipa$ 
  is l-transversal in $x+S$, 
  so $\hipfun(x) = S$ or $\hipfun(x)$ is a hyperplane in $\fixproj$.
  As $\hipa$ is a proper subspace, for at least one $x$ we have $\hipfun(x)\neq S$.
  From \cite{muzal} we deduce that $\perp$ can be characterized by the formula
  $\gen{u} \perp \gen{v} \iff \xi(u,v) = 0$ for a sesquilinear form $\xi$
  defined on $\field V$. Let $\varkappa$ be the 
  quasi-correlation of $\fixproj$ determined by $\xi$.
  Note that $\msub_2(S) = \bigcup\{ x + S \colon x \in S \}$, so
  $\hipa = \bigcup \{ x + \hipfun(x) \colon x \in S \}$ i.e.\ 
  $\hipa$ is defined by \eqref{def:h2hipa} with $\hipfun$ satisfying \eqref{def:h-x}.
  Recall that $\hipfun(0) = \{ x\in S \colon 2x \in \hipa \}$
  From \ref{lem:vari1} we deduce (formally) that either $\hipfun(0)= S$ or 
  $\hipfun(0) = h_0$ is a hyperplane and this hyperplane is contained in the set of
  $\varkappa$-selfconjugate points.
%%  $h_0 \subseteq \{ a\colon a \in\varkappa(a) \}$.
  In both cases we conclude with $2S \subseteq \hipa$, which gives $a\perp a$ for each $a$.
  Consequently, $\varkappa$ is symplectic.
\end{proof}

%% Similarly as in \ref{lem:vari1} one can prove that $\hipa$ {\em is not}
%% a subspace of $\goth M$ when there is $a\in S$ with $a\notin\varkappa(a)$, 

So, from now on we assume that $\varkappa$ is symplectic.
Denote
  $$
    {\fixaf} := {\goth M}\setminus\hipa
  $$
the corresponding affine reduct.
From \ref{lema:symplect2hipa} it follows that $\fixaf$
is a \paplsX.
\begin{lem}
  The point set of\/ $\fixaf$ consists of all the multisets 
  $x+y$ with %%$x\neq y$, 
  $x,y\in S$ and $x\notin\varkappa(y)$ (so: $x\neq y$); 
  therefore this point set can be identified
  with a subset of $\sub_2(S)$.
\end{lem}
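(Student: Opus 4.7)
The plan is to simply unpack the explicit description of $\hipa$ given by Theorem \ref{thm:hipaINprojver} and compute its complement in $\msub_2(S)$.

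First I would recall that every point of $\goth M = \VerSpace(2,\fixproj)$ has either the form $2x$ with $x\in S$ or the form $x+y$ with distinct $x,y\in S$, since $\msub_2(S) = \{2x\colon x\in S\}\cup\{x+y\colon x\neq y\}$. Next I would spell out $\hipa$ from \eqref{def:h2hipa} indexed over $\wsub_2(S)=\{0\}\cup\msub_1(S)$: for $e=0$, \eqref{def:h-0} gives $\hipfun_0=S$, producing $0+2S=\{2x\colon x\in S\}\subseteq\hipa$; for $e=x\in S$, \eqref{def:h-x} gives $\hipfun_x=\varkappa(x)$, producing $x+\varkappa(x)\subseteq\hipa$. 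Taking the union, a point $x+y$ (with $x,y\in S$, possibly equal) lies in $\hipa$ iff either it is of the form $2z$, or $y\in\varkappa(x)$, or symmetrically $x\in\varkappa(y)$.

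Then I would compute the complement. A point $x+y\in\msub_2(S)$ belongs to $\fixaf=\goth M\setminus\hipa$ iff it is not a point $2z$ and $x\notin\varkappa(y)$. Here I would invoke symplecticity of $\varkappa$: the condition $z\in\varkappa(z)$ for every $z\in S$ means that if $x=y$ then automatically $x\in\varkappa(y)$, so the requirement $x\notin\varkappa(y)$ already forces $x\neq y$ and in particular rules out all multisets $2z$. This gives the characterization in the statement: the points of $\fixaf$ are exactly the multisets $x+y$ with $x,y\in S$ and $x\notin\varkappa(y)$.

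Finally, since $x\neq y$ for every such point, the multiset $x+y$ is a genuine two-element set $\{x,y\}$, and the inclusion $\msub_2(S)\supseteq\sub_2(S)$ via $\{x,y\}\mapsto x+y$ identifies the point set of $\fixaf$ with the subset $\{\{x,y\}\in\sub_2(S)\colon x\notin\varkappa(y)\}$ of $\sub_2(S)$. The whole argument is purely a bookkeeping exercise on the definitions; I do not foresee any genuine obstacle, the only point worth being careful about is the role of symplecticity in absorbing the clause $x\neq y$ into the condition $x\notin\varkappa(y)$.
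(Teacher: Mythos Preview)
Your argument is correct; the paper states this lemma without proof, treating it as immediate from the construction of $\hipa$ in \eqref{def:h-x}, \eqref{def:h-0} together with the symplectic assumption, and your unpacking is exactly the routine verification that justifies this. One minor remark: you do not need to invoke Theorem~\ref{thm:hipaINprojver}; the explicit form of $\hipa$ is already given by \eqref{def:h2hipa} with \eqref{def:h-x} and \eqref{def:h-0}, which precede the lemma, so the argument is self-contained from the definitions alone.
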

\begin{lem}\label{lema:kolczaty}
  The hyperplane $\hipa$ determined by a symplectic polarity is \kolczaty,
  but it is not \luskwiaty.
\end{lem}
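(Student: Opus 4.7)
The plan is to handle the two halves of the statement via different geometric mechanisms. For \kolczaty, I will produce, at each point $p \in \hipa$, an explicit line of $\goth M$ through $p$ meeting $\hipa$ only at $p$. For the negation of \luskwiaty, I will exhibit a single line $L \subseteq \hipa$ such that every plane of $\goth M$ through $L$ is already forced into $\hipa$.

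For \kolczaty, a point $p \in \hipa$ has the form $x + y$ with $y \in \varkappa(x)$ (allowing $x = y$). The lines of $\goth M$ through $p$ have the shape $x + M$ with $y \in M$, or $y + M$ with $x \in M$; when $p = 2x$ the ``double'' lines $2M \subseteq 2S \subseteq \hipa$ are also available but useless here. Since $\varkappa$ is non-degenerate, $\varkappa(x)$ is a proper hyperplane of $\fixproj$; choosing $w \notin \varkappa(x)$ and setting $M := \LineOn(y,w)$ gives $M \cap \varkappa(x) = \{y\}$, so $L := x + M$ meets $\hipa$ only at $x+y = p$. The case $p = 2x$ is handled by the same recipe with $x$ in place of $y$: the symplectic identity $x \in \varkappa(x)$ is exactly what is needed to conclude $L \cap \hipa = \{2x\}$.

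For the failure of \luskwiaty, the central fact is that the leaf $2S$ lies entirely in $\hipa$, since symplecticity yields $x \in \varkappa(x)$ for every $x \in S$. Take any line $M$ of $\fixproj$ and set $L := 2M$; then $L \subseteq 2S \subseteq \hipa$. Let $\pi$ be any plane of $\goth M$ through $L$ and $p \in \pi$. Since $\pi$ is singular, $p$ is collinear in $\goth M$ with every point $2b$, $b \in M$. A short analysis of the line list of $\VerSpace(2,\fixproj)$ (parallel to \ref{lem:vernet1}, \ref{lem:vernet2}) shows that a point collinear with $2b$ either has the form $2z$ (and so belongs to $2S$) or has $b \in \suport(p)$. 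If $p \notin 2S$ then the latter must hold for every $b \in M$, forcing $M \subseteq \suport(p)$; but $|\suport(p)| \leq 2 < 3 \leq |M|$, contradiction. Hence $\pi \subseteq 2S \subseteq \hipa$.

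The main obstacle is the collinearity classification used in the plane argument: one must be certain that ``double'' points behave rigidly enough that a single line $2M$ alone pins any plane through it inside $2S$. The bookkeeping is parallel to the quadrangle inventory in \ref{lem:vernet1} and \ref{lem:vernet2}, so no genuinely new idea is needed — only care in the first part that the ``useful'' line chosen through $p=2x$ is not inadvertently one of the lines $2M$ that sit inside $\hipa$.
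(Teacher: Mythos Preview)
Your argument is correct and matches the paper's: for \kolczaty\ you both pick a line $x+L_0$ with $L_0\ni y$ not contained in $\varkappa(x)$, and for the failure of \luskwiaty\ you both show that any plane through $2M$ is trapped in the leaf $2S\subseteq\hipa$. The only difference is cosmetic: the paper dispatches the second half in one line by invoking $\topof(2M)=2S$ (via Fact~\ref{fct:beztroj}), whereas your collinearity analysis of the neighbours of $2b$ is precisely the $k=2$ content of that fact unpacked by hand---so citing \ref{fct:beztroj} rather than \ref{lem:vernet1}, \ref{lem:vernet2} would compress your proof considerably.
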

\begin{proof}
  Take a point $e = x+y$ with $x \perp y$, $x\neq y$.	Then $y\in x^\perp$.
  There is a line $L_0$ through $y$ not contained in $x^\perp$.
  Set $L = x + L_0$, then $e \in L$ and $L$ not contained in $\hipa$.
  Indeed, suppose $x+z\in\hipa$ for $y\neq z \in L_0$; then $z \in x^\perp$,
  so $L_0\subseteq x^\perp$.
  \par
  Take any line $L_0$ of $\goth M$; then $L = 2L_0 \subseteq\hipa$.
  On the other hand any plane of $\goth M$ which contains $2L_0$
  is contained in $\topof(L) = 2S \subseteq\hipa$ which yields
  our second claim.
\end{proof}
It is rather easy to observe the following
\begin{fact}\label{fct:strongINreduct}
  The maximal strong subspaces of\/ $\goth M$ are the leaves of\/ $\goth M$.
  Consequently, the maximal strong subspaces of\/ $\fixaf$
  are affine spaces of the form $(x+S)\setminus\varkappa(x)$, $x\in S$
  (cf.\ {\upshape\cite[Fact 2.2]{veradjac}, \cite[Prop. 2.11]{verspac}}).
%%  \warning[should be characterized generally there]).
\end{fact}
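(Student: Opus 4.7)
The plan is to first identify the maximal strong subspaces of $\goth M = \VerSpace(2,\fixproj)$ as exactly the leaves, and then to intersect each leaf with $\fixaf$ to read off the maximal strong subspaces of the affine reduct. The key lever for the first part is Fact \ref{fct:beztroj}, which prevents a point outside $\topof(B)$ from being adjacent to three or more points of a block $B$.

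First I would verify that each leaf is itself strong: by the isomorphism $y \longmapsto e + (k-|e|)\cdot y$ recalled right before the Veblen subsection, a leaf of $\goth M$ is an isomorphic copy of $\fixproj$, hence a linear space, and therefore a strong subspace. For maximality, I would take any strong subspace $X$ of $\goth M$ that contains a line $B$; since lines have at least three points and every other point of $X$ must be adjacent to all of $B$, Fact \ref{fct:beztroj} forces $X \subseteq \topof(B)$. Strong subspaces consisting of only one point or one line are trivially contained in a leaf, so leaves are precisely the maximal strong subspaces of $\goth M$.

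Next I would argue that strong subspaces of $\fixaf$ are exactly the strong subspaces of $\goth M$ that happen to avoid $\hipa$. Since $\goth M$ is a \plsX, two points outside $\hipa$ lie on at most one $\goth M$-line, and that line cannot be contained in $\hipa$; hence $\fixaf$-adjacency coincides with $\goth M$-adjacency on $\goth M\setminus\hipa$. Thus any maximal strong subspace of $\fixaf$ is the intersection of some leaf with $\fixaf$.

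Finally I would compute those intersections. The leaf $2S$ is entirely absorbed by $\hipa$: since $\varkappa$ is symplectic, $x\in\varkappa(x)=\hipfun(x)$ gives $2x = x+x \in x+\hipfun(x)\subseteq\hipa$ for every $x\in S$. For a leaf $x+S$ with $x\in S$ one has $(x+S)\cap\hipa = x + \hipfun(x) = x + \varkappa(x)$, so $(x+S)\setminus\hipa = x + (S\setminus\varkappa(x))$, which under the natural identification with $\fixproj$ is exactly the affine space obtained by deleting the hyperplane $\varkappa(x)$. The only (mild) obstacle is keeping straight the difference between $\fixaf$-adjacency and $\goth M$-adjacency and noticing that the exceptional leaf $2S$ contributes nothing; otherwise the argument is entirely driven by Fact \ref{fct:beztroj} and the symplectic assumption on $\varkappa$.
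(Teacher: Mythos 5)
Your argument is correct, and it supplies details that the paper itself omits: the paper states this fact without proof, delegating it to the citations \cite[Fact 2.2]{veradjac} and \cite[Prop. 2.11]{verspac}. Your route --- leaves are strong because they are isomorphic copies of the linear space $\fixproj$, and any strong subspace containing a line $B$ is trapped in $\topof(B)$ by \ref{fct:beztroj} --- is exactly the mechanism those references rely on, so in substance you have reconstructed the intended proof rather than found a new one. Two small points deserve care. First, your phrase that strong subspaces of $\fixaf$ are ``strong subspaces of $\goth M$ that happen to avoid $\hipa$'' is loosely worded: the trace $(x+S)\setminus\hipa$ is not a subspace of $\goth M$ (the deleted point of each line is missing); what your adjacency argument actually shows, correctly, is that a strong subspace of $\fixaf$ is a clique of the $\goth M$-adjacency graph, and then \ref{fct:beztroj} applied to any of its lines places it inside a leaf. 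Second, that application of \ref{fct:beztroj} needs the surviving part $L\setminus\hipa$ of a line to still carry at least three points; this holds because $\varkappa$ lives over a field of odd characteristic, so lines of $\fixproj$ have $q+1\geq 4$ points and $\hipa$ removes exactly one of them from a non-absorbed line. With these remarks your computation of the traces --- $2S$ swallowed entirely by $\hipa$ since $x\in\varkappa(x)$ for all $x$, and $(x+S)\cap\hipa = x+\varkappa(x)$ by the symmetry of $\perp$ --- completes the proof as the paper intends.
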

As a direct consequence of \ref{fct:strongINreduct}
and \ref{fct:veb2-2ver} (cf.\ \cite[Lem. 2.4]{segr2afin})
we obtain
\begin{lem}\label{lem:vebparallel}
  \begin{sentences}\itemsep-2pt
  \item\label{vebparal:1}
    Each  Veblen subconfiguration `with diagonals' 
    (i.e each projective quadrangle) of $\goth M$
    is contained in a leaf.
%%    \warning[strictly speaking: FALSE, but with assumption `cliqueness' is TRUE]
  \item\label{vebparal:2}
    The relation of Veblen parallelism is properly definable in\/ $\fixaf$
    by the formula \eqref{def:veblparal}, i.e.\ lines 
    are in the relation $\veblparal$ when they are on a (affine) plane and are 
    parallel on that plane.
  \end{sentences}
\end{lem}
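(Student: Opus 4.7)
The plan is to reduce both statements to the geometry inside a single leaf; by \ref{fct:strongINreduct} a leaf is a strong subspace of $\goth M$ isomorphic to $\fixproj$, and the trace of $\fixaf$ on it is an affine space.

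\emph{Part \eqref{vebparal:1}.}  I would take four vertices $p_1,p_2,p_3,p_4$ of a projective quadrangle in $\goth M$ (so that every pair of them is adjacent), write $p_1=x+y$, and note that each of $p_2,p_3,p_4$ must share a summand with $p_1$ and therefore lies in $x+S$ or in $y+S$. Pigeonhole supplies two indices, say $\{2,3\}$, with $p_2,p_3\in x+S$, so $p_2=x+y_2,\ p_3=x+y_3$ with $y_2\neq y_3$, both distinct from $y$. If $p_4\in x+S$ too, the conclusion is immediate. Otherwise $p_4=y+w$ with $w\neq x$, and adjacency of $p_4$ with $p_2$ and with $p_3$ forces $w=y_2$ and $w=y_3$, contradicting $y_2\neq y_3$. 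Hence all four vertices, and with them the six connecting lines, lie in the leaf $x+S$.

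\emph{Part \eqref{vebparal:2}.}  For the easy direction, two distinct parallel lines on an affine plane of $\fixaf$ (sitting inside a leaf) satisfy \eqref{def:veblparal} via the usual triangle--transversal construction. For the converse, take witnesses $p,L',L'',a_1\in L_1\cap L',a_2\in L_2\cap L''$ with $a_1\adjac a_2$. I would first show that the triangle $p,a_1,a_2$ lies in a single leaf: among the types of triangles in $\VerSpace(2,\fixproj)$ (cf.\ \cite[Fact 4.1]{veradjac}) the only non-leaf type has the form $p=x+y,\ a_1=x+z,\ a_2=y+z$ (with $x,y,z$ pairwise distinct), and direct inspection shows that the only lines through $a_1$ that meet $L''\subseteq y+S$ away from $p\notin L_1$ coincide with $\LineOn(a_1,a_2)$; the same holds symmetrically for $a_2$, so $L_1=L_2=\LineOn(a_1,a_2)$, contradicting $L_1\neq L_2$. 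Hence $p,a_1,a_2$ lie in a common leaf $S'$; then $L',L''\subseteq S'$, and the intersections $b_1=L_1\cap L''$ and $b_2=L_2\cap L'$ lie in $S'$ as well, so by \ref{fct:strongINreduct} each $L_i$, having two distinct points in $S'$, lies entirely in $S'$. Inside $S'\setminus(\hipa\cap S')$, which is an affine space, \eqref{def:veblparal} becomes the ordinary definition of affine parallelism.

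The main obstacle is the leaf-collapse step: part \eqref{vebparal:1} relies on the elementary but careful pigeon-case analysis that rules out all `mixed' distributions of $p_2,p_3,p_4$ across $x+S$ and $y+S$; part \eqref{vebparal:2} rests on excluding the `spread' triangle type, which in turn uses that lines through $a_1=x+z$ sit in $x+S$ or $z+S$, the former failing to meet $L''\subseteq y+S$ outside $p$. After these steps the rest is a routine transfer between projective and affine parallelism within a single leaf.
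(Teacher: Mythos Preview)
Your argument is essentially correct and considerably more explicit than the paper's, which simply records the lemma as ``a direct consequence of \ref{fct:strongINreduct} and \ref{fct:veb2-2ver}'' (with a pointer to \cite{segr2afin}); in effect the paper leaves both parts as an exercise, while you actually carry it out.

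Two small points deserve tightening. In part~\eqref{vebparal:1} your claim that every point adjacent to $p_1=x+y$ shares a summand with $p_1$ fails when $x=y$: if $p_1=2x$ then $p_1$ is also adjacent, via lines of the form $2L_0$, to points $2z$ in the leaf $2S$ which share no summand with $p_1$. The fix is immediate: either observe that if all four vertices lie in $2S$ the claim is trivial, so you may relabel to ensure $x\neq y$; or run your pigeonhole argument with ``the two leaves through $p_1$'' in place of ``$x+S$ or $y+S$''. In part~\eqref{vebparal:2} your appeal to the triangle classification in $\VerSpace(2,\fixproj)$ is slightly loose: in $\goth M$ there is a second non-leaf triangle type, namely $2x,\,2y,\,x+y$, with sides $2\LineOn(x,y)$, $x+\LineOn(x,y)$, $y+\LineOn(x,y)$. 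This does not actually affect your conclusion, because the witnesses $p,a_1,a_2$ live in $\fixaf$ and $2S\subseteq\hipa$, so none of them can be a double point; but you should say so rather than asserting that the spread type $x+y,\,x+z,\,y+z$ is the \emph{only} non-leaf triangle in $\goth M$.

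With these adjustments your route---a direct leaf-pigeonhole for \eqref{vebparal:1} and elimination of the spread triangle for \eqref{vebparal:2}---is a clean, self-contained alternative to the paper's citation of \ref{fct:veb2-2ver}; it avoids invoking the Veblen property of $\goth M$ altogether and makes the mechanism (two leaves through a point, single-point intersection of distinct leaves) transparent.
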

As an important by-product of \ref{fct:veb2-2ver}
%% \ref{lem:vebparallel}\eqref{vebparal:1} 
we get  nearly immediately
\begin{prop}
  The structure $\fixaf$ satisfies the Tamaschke Bedingung and 
  the Parallelogram Completion Condition. 
%%  \warning[but check, however]
\end{prop}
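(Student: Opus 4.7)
The plan is to derive both conditions from the Veblen axiom in the ambient \plsX\ ${\goth M} = \VerSpace(2,\fixproj)$. Since $\fixproj$ is veblenian (being a projective space), \ref{fct:veb2-2ver} gives that $\goth M$ is veblenian as well. In each case I will identify an incomplete Veblen configuration in $\goth M$ whose closure produces the desired intersection, and then verify that the resulting point lies in $\fixaf$ rather than on $\hipa$.

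For the Tamaschke Bedingung, take a triangle $p_1p_2p_3$ in $\fixaf$ with sides $L_{12}, L_{13}, L_{23}$, and let $K$ be a line of $\fixaf$ with $K \parallel_{\hipa} L_{23}$ meeting $L_{12}$ at a point $q_1 \neq p_1, p_2$. Parallelism in $\goth M$ forces $K \cap L_{23} = \{z\} \subset \hipa$. The key move is to take $p_2$ as the Veblen centre: the lines $L_{12}, L_{23}$ concur there, while the transversals $L_{13}$ and $K$ each meet both of them, via the points $p_1, p_3, q_1, z$ respectively. All four adjacencies required for an incomplete Veblen configuration are present, so the Veblen axiom in $\goth M$ yields a point $r \in L_{13} \cap K$. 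If $r \in \hipa$, then $r$ would be the unique point of $K \cap \hipa$, namely $z$; this forces $z \in L_{13} \cap L_{23} = \{p_3\}$, contradicting $p_3 \in \fixaf$. Hence $r \in \fixaf$, as required.

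For the Parallelogram Completion Condition, start with distinct parallel pairs $L_1 \parallel_{\hipa} L_2$ and $M_1 \parallel_{\hipa} M_2$ in $\fixaf$ satisfying $L_i \nparallel_{\hipa} M_j$ for all $i,j$, and suppose the corner intersections $p_{11}, p_{12}, p_{21}$ exist in $\fixaf$. In $\goth M$ set $\{z_L\} = L_1 \cap L_2$ and $\{z_M\} = M_1 \cap M_2$; both lie in $\hipa$. Taking $p_{11}$ as the Veblen centre, the concurrent pair becomes $(L_1, M_1)$ and the transversals are $(L_2, M_2)$; the four required adjacencies are witnessed by $z_L, p_{12}, p_{21}, z_M$. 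Veblen supplies $p_{22} \in L_2 \cap M_2$, and if $p_{22} \in \hipa$ it would have to coincide with both $z_L$ and $z_M$, forcing $L_1 \cap \hipa = M_1 \cap \hipa$ and hence $L_1 \parallel_{\hipa} M_1$, contrary to hypothesis. Therefore $p_{22} \in \fixaf$.

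The routine bookkeeping that remains is to check, in each case, that the centre and the four lines are pairwise distinct and that the transversals do not pass through the centre; all of this follows from distinctness of the initial data, the single-intersection property of lines in a \plsX, and the strict separation of $\fixaf$ from $\hipa$. The only real conceptual step is the choice of Veblen centre: in both cases, placing it at one of the already-existing corners concentrates the three given $\fixaf$-intersections together with the $\hipa$-points of the parallel pairs into exactly the four adjacencies the Veblen axiom demands. I do not anticipate any additional obstacle --- the symplectic nature of $\varkappa$ plays no further role here beyond already ensuring that $\hipa$ is a hyperplane.
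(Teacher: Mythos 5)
Your proposal is correct and takes essentially the same route as the paper, which gives no explicit proof but derives the proposition ``nearly immediately'' from \ref{fct:veb2-2ver}: you apply the Veblen axiom in the ambient veblenian space $\VerSpace(2,\fixproj)$ to a suitably chosen incomplete Veblen configuration whose two ``missing'' adjacencies are supplied by the common $\hipa$-points of the parallel lines, and then rule out that the resulting intersection lies on $\hipa$. Your verification that the new point is proper (it would otherwise be forced to coincide with a point already known to lie off $\hipa$, or to identify two distinct directions) is exactly the bookkeeping the paper leaves implicit.
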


The framework proposed admits some degenerations. Namely, we cannot expect that $\varkappa$
is nondegenerate, and thus it may happen that $\hipa \supseteq 2S$ contains some leaves of 
the form $x+S$ as well. And then also lines contained in these leaves cannot 
be extended to ``proper" planes of $\fixaf$ and $\hipa$ is `more non-\luskwiaty\ than
expected'. In essence, this happens in every odd dimension of $\field V$.
%% as nondegenrate symplectic forms exist in even dimensions only.

In what follows we assume that 
{\em $\varkappa$ is nondegenerate}, and then $\dim({\field V})$ is even.
Let us examine the structure of the parallelism $\parallel_{\hipa}$ and of the
horizon determined by it. We begin with an evident observation
\begin{lem}\label{lem:2types-of-dir}
    Let $e\in\hipa$. Set 
    ${\fixaf} = {\goth M}\setminus\hipa$.
    Then one of the following two possibilities holds
    \begin{itemize}\def\labelitemi{\textbullet}\itemsep-2pt
    \item
      {\em For any two lines $L_1,L_2$ of $\fixaf$ which pass through $e$
      there is a plane of $\fixaf$ which contains them; in consequence, 
      $L_1\veblparal L_2$}: 
      this happens when $e = 2x$ with $x\in S$.
      In this case the two leaves of $\goth M$ through $e$ are $x+S$ and $2S\subseteq \hipa$
      and thus $L_1,L_2$ are determined by two lines, both two in $x+S$.
    \item
      {\em There are two lines $L_1,L_2$ through $e$ such that 
      $L_1 \not\veblparal L_2$ 
      and any $L_3$ through $e$
      (i.e.\ any $L_3$ with $L_3\parallel L_1$) satisfies
      $L_3\veblparal L_1$ or $L_3\veblparal L_2$}: 
      this happens when $e = x+y$ with $x\neq y$, we take 
      $L_1 \subseteq x+ S$, $L_2\subseteq y+S$ (comp. \ref{lema:kolczaty}).
    \end{itemize}
\end{lem}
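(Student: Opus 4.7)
The plan is to reduce everything to an analysis of the leaves passing through $e$. By Fact \ref{fct:strongINreduct} the maximal strong subspaces of $\goth M$ are the leaves, so every plane of $\goth M$ sits inside a single leaf; consequently two lines of $\goth M$ can be coplanar only if they lie in a common leaf. Together with the fact that the trace $\hipa\cap(x+S) = x + \varkappa(x)$ turns the reduct of each non-trivial leaf $x+S$ into a genuine affine space, this will let me read off the two possible local pictures at $e$ directly from how many non-$\hipa$ leaves contain $e$.

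First I would list the leaves through $e$. A point of level $2$ is either $2x$ or $x+y$ with $x\neq y$. The symplecticity of $\varkappa$ gives $2S\subseteq\hipa$, and the nondegeneracy hypothesis gives that no leaf $x+S$ is contained in $\hipa$ (as $\varkappa(x)$ is a proper hyperplane of $\fixproj$). Hence the leaves through $e = 2x$ are $2S\subseteq\hipa$ and $x+S\not\subseteq\hipa$, whereas for $e = x+y\in\hipa$ with $x\neq y$ (which forces $x\perp y$) the two leaves are $x+S$ and $y+S$, neither contained in $\hipa$. Note also that ``a line through $e$ in $\fixaf$'' is unambiguous: it is a line $L\in\lines^\propto$ whose closure $\overline L$ meets $\hipa$ at the single point $e$, and two such lines are $\parallel_\hipa$-parallel iff they meet $\hipa$ at the same point, i.e.\ iff both pass through $e$.

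In the case $e = 2x$ every line of $\fixaf$ through $e$ lies in the unique non-hyperplane leaf $x+S$, whose affine reduct is the affine space $(x+S)\setminus(x+\varkappa(x))$ in which $e$ is a horizon point. Any two lines through the same horizon point of an affine space are parallel and thus coplanar in that affine space, and by Lemma \ref{lem:vebparallel}\eqref{vebparal:2} coplanarity together with having no common affine point gives $L_1\veblparal L_2$.

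In the case $e = x+y$, choose $L_1\subseteq x+S$ and $L_2\subseteq y+S$ of $\fixaf$ through $e$ (such lines exist because $\varkappa(x)$ and $\varkappa(y)$ are proper hyperplanes of $\fixproj$). Since $L_1,L_2$ sit in distinct leaves, they cannot lie on any common plane, so $L_1\not\veblparal L_2$. For any $L_3$ of $\fixaf$ with $L_3\parallel_\hipa L_1$, the line $L_3$ passes through $e$ and therefore lies in one of the two leaves $x+S$ or $y+S$; applying the affine argument of the first case inside that leaf yields $L_3\veblparal L_1$ or $L_3\veblparal L_2$ accordingly. The main subtlety I expect is not in the combinatorics of the cases but in the appeal to Lemma \ref{lem:vebparallel}\eqref{vebparal:2}: one has to be sure that when two lines of $\fixaf$ are affinely parallel inside a leaf they actually satisfy the full formula \eqref{def:veblparal} for $\veblparal$ in $\fixaf$, which is exactly what Lemma \ref{lem:vebparallel} provides via Fact \ref{fct:veb2-2ver}.
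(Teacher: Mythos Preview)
Your argument is correct and matches the paper's own reasoning: the paper treats this lemma as ``an evident observation'' and gives no separate proof, the justification being entirely the hints embedded in the two bullet points (identifying the leaves through $e$ and noting which of them lie in $\hipa$). Your write-up simply spells out those hints, invoking exactly the facts the paper has prepared (Fact~\ref{fct:strongINreduct}, Lemma~\ref{lem:vebparallel}, and the nondegeneracy of $\varkappa$), so there is no discrepancy in approach.
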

This allows us to distinguish two types of directions in $\fixaf$;
directions of the first type 
  ($[L]_\parallel$ when $L\parallel L'$ iff $L\veblparal L'$ for each line $L'$)
  correspond to the elements of one totally deleted leaf $2S$.
In any case, a direction of $\fixaf$ uniquely corresponds to a point in $\hipa$;
in what follows we shall frequently identify corresponding two objects.

For a direction $\goth a$ (an equivalence class under {\em a parallelism $\parallel'$}) 
and a set of points $X$ write
  \begin{ctext}
    ${\goth a}\inc X$ (in words: ${\goth a}$ is incident with $X$)
    when ${\goth a} = [L]_{\parallel'}$ for a line 
    $L \subseteq X$.
  \end{ctext}
\begin{rem}\normalfont
  Note that though the two relations $\parallel$ and $\veblparal$ {\em do not
  coincide}, the relation $\veblparal$ 
  {\em is	a partial parallelism} so, it also determines its directions (equivalence classes).
  And, formally speaking, a point $x+y\in\hipa\setminus 2S$ determines
  {\em two distinct} $\veblparal$-directions.

  Let us stress on the fact that the distinction formulated in \ref{lem:2types-of-dir}
  refers entirely to $\parallel_{\hipa}$-directions; loosely speaking ${\goth a}\in 2S$ iff
  it is incident with exactly one leaf, and other $\parallel$-directions are incident with two leaves.
  But {\em each $\veblparal$-direction is incident with exactly one leaf}!
\myend
\end{rem}

$\hipa$ is \kolczaty, so its points can be identified with the equivalence classes of 
$\parallel_{\hipa}$. However, $\hipa$ is not \luskwiaty, so lines on $\hipa$ cannot
be identified, in general, with directions of planes of $\fixaf$ and the standard way 
to recover $\goth M$ from $\fixaf$ fails.
This recovering is still possible though, only the recovering procedure must
be complicated a bit.
\begin{lem}\label{lem:vebparallel:1}
%%  \begin{sentences}\itemsep-2pt
%%  \item\label{vebparal:3}
    The lines of the horizon of\/ $\fixaf$
    that are contained in a leaf of the form $x+S$ are definable in\/ $\fixaf$.
%%  \end{sentences}
\end{lem}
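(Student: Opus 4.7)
The plan is to characterize the horizon lines contained in a leaf of the form $x+S$ as precisely those arising from planes of $\fixaf$. By Fact~\ref{fct:strongINreduct} every such plane is forced to lie in a single maximal strong subspace, and the resulting characterization can be expressed entirely in the language of $\fixaf$.

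First I observe that the maximal strong subspaces of $\fixaf$ are definable in purely \plsX-theoretic terms, and by Fact~\ref{fct:strongINreduct} they are precisely the affine reducts $A_x := (x+S)\setminus\hipa$, $x\in S$. Inside each $A_x$ the restriction of the Veblen parallelism $\veblparal$ (definable in $\fixaf$ by Lemma~\ref{lem:vebparallel}\eqref{vebparal:2}) coincides with $\parallel_{\hipa}$: two lines of $A_x$ are $\veblparal$-parallel iff they are coplanar in $x+S$ and have no common affine point, iff their projective extensions meet at a point of $x+\varkappa(x)$, iff they are $\parallel_{\hipa}$-parallel. So on each $A_x$ the affine-space parallelism is recovered from within $\fixaf$.

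Next, every plane of $\fixaf$ is contained in some $A_x$. A plane is a strong subspace, hence by Fact~\ref{fct:strongINreduct} it lies in some leaf of $\goth M$; it cannot lie in the leaf $2S$ since $2S\subseteq\hipa$. So any plane $\pi$ of $\fixaf$ arises as $\pi = (x+\pi_0)\setminus\hipa$ for some plane $\pi_0$ of $\fixproj$ with $\pi_0\not\subseteq\varkappa(x)$, and its trace on $\hipa$ is the line $x + (\pi_0\cap\varkappa(x))$, which is contained in the leaf $x+S$. Conversely, any line of $\hipa$ lying in $x+S$ has the form $x+L_0$ with $L_0\subseteq\varkappa(x)$; selecting $p\in S\setminus\varkappa(x)$ (possible because $\varkappa$ is nondegenerate) and spanning yields a plane $\pi_0\supseteq L_0$ with $\pi_0\not\subseteq\varkappa(x)$, and then $(x+\pi_0)\setminus\hipa$ is a plane of $\fixaf$ whose horizon line is exactly $x+L_0$.

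Combining these observations, a set $l$ of $\parallel_{\hipa}$-directions is a line of the horizon of $\fixaf$ contained in some leaf $x+S$ iff there exists a plane $\pi$ of $\fixaf$ such that $l$ is the set of $\parallel_{\hipa}$-equivalence classes of the lines of $\pi$. All ingredients of this description — planes of $\fixaf$, their lines, and the equivalence classes under $\parallel_{\hipa}$ — are available in $\fixaf$, so the lines of the horizon contained in leaves $x+S$ are definable in $\fixaf$. The main delicate point is verifying that no plane of $\fixaf$ straddles two distinct leaves, which follows directly from Fact~\ref{fct:strongINreduct}: strong subspaces of $\VerSpace(2,\fixproj)$ are rigidly contained in leaves, and the leaf $2S$ is excluded because it lies entirely in $\hipa$.
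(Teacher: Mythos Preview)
Your argument is correct and follows the same overall strategy as the paper: the horizon lines contained in leaves $x+S$ are exactly the ``lines at infinity'' of planes of $\fixaf$, so once planes are definable the desired collinearity relation on directions is definable.

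The one technical difference is how planes are produced. The paper gives an explicit global formula
\[
  \pi(L_1,L_2,L_3)=\bigcup\{L\colon L\parallel L_1,\; L\adjac L_2,L_3\}
\]
for a triangle $L_1,L_2,L_3$ satisfying an extra adjacency condition (needed because in a Veronese space not every triangle spans a plane; cf.\ the Remark following the paper's proof). You instead invoke Fact~\ref{fct:strongINreduct} to obtain the maximal strong subspaces $A_x$ as definable affine spaces and then take planes \emph{within} them, which neatly sidesteps the triangle-in-a-leaf issue. Both routes yield the same final collinearity criterion $\collin([L_1]_\parallel,[L_2]_\parallel,[L_3]_\parallel)\iff$ the three directions occur in a common plane; your packaging via maximal strong subspaces is arguably cleaner, while the paper's explicit $\pi(L_1,L_2,L_3)$ is more constructive.
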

\begin{proof}
\iffalse
  It is a standard exercise in the affine-like geometry to prove 
  \eqref{vebparal:2} when \eqref{vebparal:1} is true.
\fi
  Firstly, we note that the class of planes of $\fixaf$ can be defined in $\fixaf$.
  Indeed, let 
  $\Delta$ be a triangle with the sides $L_1,L_2,L_3$ and the vertices $e_1,e_2,e_3$,
  $e_i\not\inc L_i$ for $i=1,2,3$, such that 
  $e_1 \adjac e_0 \inc L_1$ for some $e_0\neq e_2,e_3$. 
  Then the set
%%  In view of \ref{lem:vebparallel}\eqref{vebparal:1}, 
%%  to prove \ref{lem:vebparallel:1} it suffices to note that a plane of $\goth A$
%5  can be defined as in the ordinary affine geometry:
  %
  \begin{equation}\label{def:hip2plane}
    \pi(L_1,L_2,L_3) \quad := \quad \bigcup\left\{  
    L\colon L\parallel L_1\land L\adjac L_2,L_3
    \right\}
  \end{equation}
  is a plane in $\fixaf$ i.e.\ 
  $\pi(L_1,L_2,L_3) = a + A$ for 
  a point $a$ of $\fixproj$ and a plane $A$ of the affine reduct $\fixproj\setminus\hipfun(a)$,
  and each plane of $\fixaf$ has a form as in \eqref{def:hip2plane}.
  %
\iffalse
  when the lines $L_0,L_1,L_2,L_3$ are given such that
  %
  \begin{ctext}
     $L_0\parallel L_1\neq L_0$, $L_0\adjac L_2,L_3$, \/
     $L_1,L_2,L_3$ form a triangle.
  \end{ctext}
\fi
  %
  So, let $\planes$ be the class of planes. The collinearity of the required form
  is defined by
  \begin{equation}
    \collin([L_1]_\parallel,[L_2]_\parallel,[L_3]_\parallel)
    \iff (\exists A\in\planes)(\exists L'_1,L'_2,L'_3\subseteq A)\;
    [\wedge_{i=1}^3 L'_i\parallel L_i].
  \end{equation}
  This argument closes the reasoning.
\end{proof}
\begin{rem}\normalfont
  In the ordinary affine geometry the formula \eqref{def:hip2plane}
  defines a plane for every triangle $L_1,L_2,L_3$.
  In the case of  Veronese spaces we must be cautious.
  Indeed, \em 
  if lines $L_0,L_1,L_2,L_3$ yield in $\goth M$ a Veblen figure of 
  the form \ref{fct:vebinver}\eqref{vebinver:typ2} or \ref{fct:vebinver}\eqref{vebinver:typ3}
  then $\pi(L_1,L_2,L_3) = \msub_2(m)$ and, clearly, the latter is not a plane.
\myend
\end{rem}

Though the lines on $2S$ are not ``improper lines" of affine planes, we can
recover also these lines in terms of $\fixaf$.

\begin{lem}\label{lem:hipa2-2lines}
  The lines of the horizon of\/ $\fixaf$ that are contained in the leaf $2S$
  are definable in terms of\/ $\fixaf$.
\end{lem}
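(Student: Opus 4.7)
The lines of the horizon $\hipa$ contained in $2S$ are exactly the sets $2L_0$ with $L_0$ ranging over the lines of $\fixproj$, so the plan is to recover each such $2L_0$ from $\fixaf$ alone, splitting the argument according to whether $L_0$ is totally isotropic (TI) or non-totally-isotropic (NTI) with respect to $\varkappa$. By \ref{lem:2types-of-dir} the special directions (points of $2S$) are characterized in $\fixaf$ as those parallel classes all of whose lines are $\veblparal$-parallel to one another, and by \ref{fct:strongINreduct} each special direction $2x$ is incident with the unique maximal strong subspace $(x+S)\setminus\varkappa(x)$. A short multiset computation then shows that for two distinct special directions $2x_1, 2x_2$ the corresponding affine leaves meet in the singleton $\{x_1+x_2\}$ precisely when $x_1\not\perp x_2$, and in $\emptyset$ when $x_1\perp x_2$; setting $L_0 := \overline{x_1 x_2}$, the first alternative is the NTI case $L_0\nsubseteq\varkappa(x_i)$ and the second is the TI case. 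This dichotomy is itself definable in $\fixaf$.

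In the NTI case I would take the intersection point $p$ of the two affine leaves and then the unique line $l$ of the first leaf passing through $p$ with direction $2x_1$; a direct check gives $l = (x_1+L_0)\setminus\{2x_1\}$, so each point $q \in l$ has the form $x_1+y$ for some $y \in L_0\setminus\{x_1\}$, and the second affine leaf through $q$ is precisely the one whose special direction is $2y$. Consequently $2L_0$ is recovered as $\{2x_1\}$ together with the special directions of the second affine leaves of all points of $l$, visibly definable from $\fixaf$.

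In the TI case the entire line $x_1+L_0$ lies in $\hipa$, so $L_0$ leaves no trace in the affine part of any single leaf; this is the main obstacle, and I overcome it by invoking \ref{lem:vebparallel:1}, by which the horizon lines inside each leaf $x+S$ are already definable in $\fixaf$. The two affine horizons now share a unique direction $\goth p := x_1+x_2 \in \hipa$, and in each leaf $x_i+S$ there is a unique horizon line $h_i$ through both $2x_i$ and $\goth p$, namely $h_i = x_i+L_0$. Then I would define $2L_0$ to be $\{2x_1, 2x_2\}$ together with the set of those $2x_3$ for which some horizon line in leaf $x_3+S$ passes through $2x_3$ and meets both $h_1$ and $h_2$. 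For any $x_3 \in L_0\setminus\{x_1,x_2\}$ the line $x_3+L_0$ is such a witness (meeting $h_1, h_2$ at $x_1+x_3, x_2+x_3$ respectively); conversely, any such witness $h_3 \subseteq x_3+S$ through $2x_3$ must, since two distinct leaves of $\goth M$ share at most a single point, have the form $x_3+L'$ with $x_1, x_2, x_3 \in L'$, forcing $L' = L_0$ and hence $x_3 \in L_0$. This again expresses $2L_0$ definably in $\fixaf$ and completes the plan, the principal subtlety being the TI situation, resolved by lifting the question into the already-definable geometry of horizon lines inside the leaves $x+S$.
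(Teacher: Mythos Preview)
Your argument is correct, but it follows a genuinely different route from the paper's. The paper gives a single uniform collinearity formula for triples of special directions ${\goth a}_1,{\goth a}_2,{\goth a}_3\in 2S$: they are collinear iff there exist lines $L_1,L_2,L_3$ in the corresponding leaves, all three crossing two opposite sides $L',L''$ of a \emph{proper quadrangle} in $\fixaf$ (the notion of a proper quadrangle being expressible once the operator $\topof$ is available via maximal strong subspaces). Correctness of this formula is then read off directly from the classification Lemmas~\ref{lem:vernet1} and~\ref{lem:vernet2}, so the net machinery developed earlier does all the work.

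Your proof, by contrast, avoids the net lemmas entirely. You split on whether the line $L_0=\overline{x_1x_2}$ is totally isotropic, detect this split via the (definable) intersection pattern of the two affine leaves, and then in the NTI case walk along the visible line $x_1+L_0$ inside one leaf, reading off the special directions of the ``second'' leaves of its points; in the TI case you bootstrap from Lemma~\ref{lem:vebparallel:1}, using that the horizon line $x_i+L_0$ is already definable inside leaf $x_i+S$, and recognise $2L_0$ through incidences among such horizon lines. Each step checks out; in particular the converse direction in the TI case is clean once one notes that two horizon lines in distinct leaves can only meet at the unique common point $x_i+x_3$ of those leaves. What your approach buys is an argument independent of the quadrangle classification (so it would survive in contexts where \ref{lem:vernet1}--\ref{lem:vernet2} are unavailable); what the paper's approach buys is a single case-free formula and a payoff for the structural lemmas already proved.
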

\begin{proof}
  Let $\cal S$ stand for the class of the maximal strong subspaces of $\fixaf$;
  it is definable. From \ref{fct:strongINreduct}
  ${\cal S} = \{a + (S \setminus \varkappa(a)) \colon a \in S\}$. 
%%  \warning[nearly evident, but would be nice to define not in a 
%%  set-theoretic manner but in a constructive/generation way]
  %
  So, with each line $L$ of $\fixaf$ we have a definable set $\topof(L)$ with
  $L\subseteq\topof(L)\in{\cal S}$.
  In particular, the notion of a proper quadrangle can be expressed in terms of $\fixaf$.
  We have for ${\goth a}_1,{\goth a}_2,{\goth a}_3\in 2S$
  \begin{multline}
    \collin({\goth a}_1,{\goth a}_2,{\goth a}_3) \iff
     \exists L_1,L_2,L_3 \Big(
      \exists L',L'',M',M'' \text{: - a proper quadrangle in } {\fixaf}
  \\
     \big(L_1,L_2,L_3 \adjac L',L'' \Land \land_{i=1}^3{\goth a}_i\inc \topof(L_i)\big)\Big).
  \end{multline}
  The claim is immediate after \ref{lem:vernet1} and \ref{lem:vernet2}.
\iffalse
  \begin{multline}
    \collin({\goth a}_1,{\goth a}_2,{\goth a}_3) \iff
     (\exists S_1,S_2,S_3\in{\cal S})
     (\exists L',L''-\text{ lines of }{\fixaf})\;
     [L'\neq L'' \land 
     \\
     \wedge_{i=1}^3 ({\goth a}_i \inc S_i  %%\adjac L',L'')].
     \land |S_i \cap L'| = 1 = |S_i \cap L''|)].
  \end{multline}
  %
  Indeed, let $x_i$ such that ${\goth a}_i = 2x_i$.
  Then $2x_i \inc S_i\in{\cal S}$ yields $S_i = x_i+S$.
  From the characterization of the nets given in \cite[???]{verspac}
  we obtain $L' = a+L_0$, $L'' = b+L_0$ for a line $L_0$
  such that $x_1,x_2,x_3 \in L_0$ and $a,b$ are arbitrary distinct.
\fi
%%  \warning[it is a sketxch of left-to-right proof. Should be made 
%%  more precise]
\end{proof}
As an important consequence we obtain now
\begin{thm}\label{thm:verred2ver}
%Assume that $\dim(\Rad(\varkappa))\leq 1$.
  The underlying Veronese space\/ $\goth M$ can be recovered from its
  affine reduct\/ $\fixaf$.
\end{thm}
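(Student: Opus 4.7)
The plan is to reconstruct the three missing ingredients of $\goth M$ from $\fixaf$ one by one: the points of $\hipa$, the lines of $\goth M$ crossing $\hipa$ (which must be extended by one point), and the lines entirely contained in $\hipa$ (both those on leaves of the form $x + S$ and those on the leaf $2S$). Once these are in place, the incidence relation can be read off in a canonical way.

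First, since by \ref{lema:kolczaty} the hyperplane $\hipa$ is \kolczaty, the points of $\hipa$ are in one-to-one correspondence with the equivalence classes of $\parallel_{\hipa}$ on the lines of $\fixaf$; so I take directions $[L]_{\parallel_{\hipa}}$ as the new points. Each line $L\in\lines^\propto$ of $\fixaf$ is then extended to a line of $\goth M$ by adjoining the point $[L]_{\parallel_{\hipa}}$. This already accounts for all points of $\goth M$ (points of $\fixaf$ plus points of $\hipa$) and all lines of $\goth M$ that are not contained in $\hipa$, together with the incidence between these lines and points on the horizon.

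Next I must produce the lines of $\goth M$ that lie inside $\hipa$. By \ref{lem:2types-of-dir} every direction is either incident with exactly one leaf (the case ${\goth a}\in 2S$) or with exactly two leaves (the case ${\goth a}=x+y$, $x\neq y$); this bipartition is expressible inside $\fixaf$. Correspondingly, the lines inside $\hipa$ split into those contained in some leaf $x+S$ and those contained in the single leaf $2S$. The first class is recovered by \ref{lem:vebparallel:1}: a triple of directions ${\goth a}_1,{\goth a}_2,{\goth a}_3$ with ${\goth a}_i\inc x+S$ is collinear on $\hipa$ iff it satisfies the collinearity predicate $\collin$ defined there via planes of $\fixaf$. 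The second class is handled by \ref{lem:hipa2-2lines}: directions in $2S$ are collinear in $\hipa$ iff they satisfy the predicate $\collin$ formulated there in terms of proper quadrangles of $\fixaf$ and the maximal strong subspaces $\topof(L)$.

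Finally, I assemble these pieces. The reconstructed structure has the point set $(S\setminus\hipa)\cup\hipa$ (directions) and the line set
\begin{equation*}
\bigl\{\, L\cup\{[L]_{\parallel_{\hipa}}\}\colon L\in\lines^\propto\,\bigr\}\;\cup\;\bigl\{\text{triples } \collin\text{-lines on leaves }x+S\bigr\}\;\cup\;\bigl\{\collin\text{-lines on }2S\bigr\},
\end{equation*}
with membership as incidence. All ingredients on the right-hand side have been shown to be first-order definable in $\fixaf$ (directly, or via the earlier lemmas on planes, strong subspaces, and proper quadrangles), so the construction is intrinsic to $\fixaf$; and by construction it yields precisely $\goth M$. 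The main subtlety, already absorbed into \ref{lem:vebparallel:1} and \ref{lem:hipa2-2lines}, is that $\hipa$ fails to be \luskwiaty\ along $2S$, so one cannot recover horizon lines by the naive affine-geometric recipe of identifying them with directions of planes; the two different definitions of $\collin$, one through planes and one through proper quadrangles, are exactly what is needed to bypass this obstacle.
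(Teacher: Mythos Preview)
Your proposal is correct and follows exactly the route the paper intends: the theorem is stated in the paper as an immediate consequence of \ref{lema:kolczaty}, \ref{lem:2types-of-dir}, \ref{lem:vebparallel:1}, and \ref{lem:hipa2-2lines}, and your write-up simply makes explicit how these pieces assemble into a reconstruction of $\goth M$. The only cosmetic point is that the bipartition of \emph{directions} from \ref{lem:2types-of-dir} does not literally induce the bipartition of horizon \emph{lines} (a line $x+L\subseteq\hipa$ with $x\in L$ contains the point $2x$ of the first type together with points $x+y$ of the second type), but this does not affect the argument, since the two $\collin$ predicates already handle the two classes of lines separately.
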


\begin{note}\normalfont
  Slightly rephrasing the proof of \ref{lem:hipa2-2lines} with the help of
  \ref{prop:ver2netax} one can prove that 
  \em
  the parallelism $\parallel_{\hipa}$
  can be defined in terms of the incidence structure of $\fixaf$.
  \normalfont
  Indeed, two lines are parallel either when they are in one leaf: then their
  parallelism coincides with $\veblparal$. Or they are in distinct leaves of $\goth M$:
  then they can be completed to a proper net; if they do not intersect each other, their common point must 
  lie in $\hipa$.
\end{note}

As we already noted, the leaves of $\fixaf$ carry the structure of 
an affine space.   
Particularly, if $\fixproj = \projgeo(n,q)$ ($2 {\not|} n,q$)
then the leaves of $\fixaf$ have the structure of $\afgeo(n,q)$.
But the geometry of affine reducts of Veronese spaces associated with
projective spaces and the geometry of Veronese spaces associated with 
affine spaces are essentially distinct.

\begin{thm}\label{thm:verprodNOTverred}
  Let\/ $\hipa$ be a hyperplane in\/ $\VerSpace(2,\fixproj)$.
  Then 
  $$
    \VerSpace(2,\fixproj)\setminus\hipa \not\cong
    \VerSpace(k,{\goth A}_0)
  $$
  for every affine space ${\goth A}_0$ and every integer $k$.
\end{thm}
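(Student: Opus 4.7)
The plan is to derive a contradiction from the assumption $\fixaf := \VerSpace(2,\fixproj)\setminus\hipa \cong \VerSpace(k,{\goth A}_0)$. By \ref{thm:hipaINprojver} the deleted hyperplane $\hipa$ is determined by a symplectic (and, as we now assume, nondegenerate) quasi-correlation $\varkappa$, so $2S\subseteq\hipa$ and each leaf $x+S$ loses exactly the subspace $x+\varkappa(x)$.

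First I would identify the maximal strong subspaces on both sides, since any isomorphism of partial linear spaces preserves them. By \ref{fct:strongINreduct} the maximal strong subspaces of $\fixaf$ are precisely the affine leaves $L_x:=(x+S)\setminus\varkappa(x)$, $x\in S$. On the other side the analogous fact for Veronese spaces over linear spaces (cf.\ \cite{verspac,veradjac}) gives that the maximal strong subspaces of $\VerSpace(k,{\goth A}_0)$ are exactly its leaves $e+(k-|e|)S_0$. The hypothetical isomorphism must therefore map leaves bijectively to leaves.

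Next I would pin down $k$ by counting leaves through a point. A point $x+y$ of $\fixaf$, with $x\ne y$ and $x\notin\varkappa(y)$, lies on exactly the two leaves $L_x,L_y$. On the Veronese side a point $f\in\msub_k(S_0)$ lies on exactly $k$ leaves, because leaves through $f$ biject with pairs $(x,r)$ having $1\le r\le k$ and $f(x)\ge r$, and these total $\sum_x f(x)=k$. Hence $k=2$.

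The final step is a comparison of leaf-intersection patterns. A direct inspection of $\VerSpace(2,{\goth A}_0)$ yields $2S_0\cap(a+S_0)=\{2a\}$ and $(a+S_0)\cap(b+S_0)=\{a+b\}$ for $a\ne b$, so every pair of distinct leaves meets in exactly one point. In $\fixaf$, however, $L_x\cap L_y=\{x+y\}$ if $y\notin\varkappa(x)$ and $L_x\cap L_y=\emptyset$ if $y\in\varkappa(x)\setminus\{x\}$; since $\varkappa$ is nondegenerate and $\dim\fixproj\ge 3$, the hyperplane $\varkappa(x)$ properly contains $\{x\}$, so $\fixaf$ carries disjoint pairs of leaves. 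This contradicts the pattern forced on the Veronese side. The main obstacle is the second half of the leaf identification — namely that every maximal strong subspace of $\VerSpace(k,{\goth A}_0)$ is a leaf; this rests on the general structure theory of Veronese spaces over linear spaces rather than on the specific $\VerSpace(2,\fixproj)$-picture used on the $\fixaf$-side.
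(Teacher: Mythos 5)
Your argument is correct, and it diverges from the paper's proof in the decisive step. The two proofs share their first half: both identify the leaves with the maximal strong subspaces on either side (so that a hypothetical isomorphism must match leaves to leaves) and then count leaves through a point ($2$ in $\fixaf$, $k$ in $\VerSpace(k,{\goth A}_0)$) to force $k=2$. The paper then concludes via a configurational axiom: by \ref{prop:ver2netax} the space $\VerSpace(2,{\goth A}_0)$ satisfies the Net Condition, whereas in $\fixaf$ two lines through a deleted point $x+y\in\hipa$, one lying in $x+S$ and one in $y+S$, complete to a proper net whose crossing point is missing. You instead compare leaf-intersection patterns: any two distinct leaves of $\VerSpace(2,{\goth A}_0)$ meet in exactly one point, while $\fixaf$ contains disjoint pairs of leaves $L_x,L_y$, namely whenever $y\in\varkappa(x)\setminus\{x\}$ — and such $y$ exist because $\varkappa(x)$ is a hyperplane of $\fixproj$. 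These are two faces of the same geometric phenomenon (the deletion of the point $x+y$ lying ``between'' the leaves $L_x$ and $L_y$), but your packaging is more elementary: it bypasses \ref{lem:vernet1}, \ref{lem:vernet2} and \ref{prop:ver2netax} entirely. The cost, which you correctly flag, is that your final step also relies on the characterization of the maximal strong subspaces of $\VerSpace(k,{\goth A}_0)$ as its leaves; but this is exactly the general statement behind \ref{fct:strongINreduct} (via \ref{fct:beztroj}: a point adjacent to three points of a line lies in its leaf, so any strong subspace containing a line sits inside that line's leaf), and the paper's own deduction of $k=2$ already uses it implicitly, so nothing new is being assumed. One minor remark: you invoke nondegeneracy of $\varkappa$, in line with the paper's standing assumption, but your argument survives degeneracy — one only needs some $x\notin\Rad(\varkappa)$ and some $y\in\varkappa(x)\setminus(\Rad(\varkappa)\cup\{x\})$ to produce a disjoint pair of nonempty leaves, which is available whenever $\fixaf$ is nontrivial.
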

\begin{proof}
  Let $x+S$ be a leaf of 
  ${\goth M} = \VerSpace(2,\fixproj)$. Then the corresponding leaf
  of ${\fixaf} = {\goth M}\setminus\hipa$ is $S':= (x+S)\setminus\hipa$; 
  let   ${\goth A}'_0$ be the restriction of 
  ${\goth A}$ to $S'$. 
  Each leaf of $\fixaf$ is isomorphic to ${\goth A}'_0$.
  And there pass exactly two leaves through each point of $\fixaf$.
  Suppose an isomorphism exists. Then $k=2$ and ${\goth A}_0 = {\goth A}'_0$.
%%  \warning[complete, if a general not `counting' argument exists]
  To close the proof it suffices to observe that the Veronese space 
  $\VerSpace(2,{\goth A}_0)$ associated with an affine space satisfies
  the Net Condition.
  The reduct $\fixaf$ does not satisfy this condition:    
  two lines through $x+y\in\hipa$, one in $x+S$ and the second in $y+S$
  can be completed to a net on its proper points, and clearly they do not 
  cross each other in $\fixaf$.
\end{proof}

\subsection{Generalization: hyperplanes in Veronesians with level $k>2$
associated with projective spaces}

The construction of a hyperplane determined by a symplectic form can be
applied to Veronesians of the level greater than 2 as well.
Let $\eta\colon V^k\longrightarrow F$ be a $k$-linear nondegenerate alternating
form, defined on a vector space $\field V$ with $V$ being its set of vectors, 
let $F$ be the set of scalars.
%% \warning[do we need all these notions?] 
Recall two basic properties of $\eta$:
\begin{enumerate}[a)]\itemsep-2pt
\item\label{eta:symetria}
  $\eta(v_1,\ldots,v_k) = 0$ yields $\eta(v_{\sigma(1)},\dots,v_{\sigma(k)}) = 0$
  for every permutation $\sigma$ of $\{ 1,\dots, k \}$.
\item\label{eta:powtorki}
  if $v_i = v_j$ for $1\leq i<j\leq k$ then $\eta(v_1,\dots, v_k) = 0$.
%% \item
\end{enumerate}

For a family $q_1 = \gen{v_1},\dots, q_k = \gen{v_k}$ of points of 
$\fixproj = \PencSpace(1,{\field V}) = \struct{S,\lines}$ we write 
$\perp_\eta(q_1,\dots,q_k)$ when $\eta(v_1,\dots,v_k) = 0$.
From the property \ref{eta:symetria}) of $\eta$ we get that the relation $\perp = \perp_\eta$ is
fully symmetric.

Define
\begin{equation}\label{def:eta2hipa}
  \hipa \colon = \left\{ q_1 + q_2 + \ldots + q_k \colon
  q_1,\dots, q_k \text{ - points of } \fixproj,\; \perp(q_1,\dots, q_k) \right\}
\end{equation}
\begin{fact}\label{fct:eta2hipa}
  The set\/ $\hipa$ defined by \eqref{def:eta2hipa} is a (nondegenerate) hyperplane
  in\/ $\VerSpace(k,\fixproj)$.
\end{fact}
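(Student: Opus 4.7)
I plan to verify that $\hipa$ is a subspace, is l-transversal, and is proper by a uniform line-by-line analysis based on the description \eqref{def:verbloki} of the blocks of $\VerSpace(k,\fixproj)$. A typical line has the shape $L = e + r L_0$ with $e = p_1 + \cdots + p_{k-r} \in \msub_{k-r}(S)$, $0 < r \le k$, and $L_0 \in \lines$; fix representatives $v_i \in V$ of $p_i$, and let $\widetilde{L_0}$ denote the two-dimensional subspace of $\field V$ coordinatizing $L_0$. Unwinding \eqref{def:eta2hipa}, for $x = \langle u \rangle \in L_0$ one has
\[
  e + r x \in \hipa \quad\iff\quad \eta\bigl(v_1,\ldots,v_{k-r},\underbrace{u,\ldots,u}_{r}\bigr) = 0,
\]
a condition independent of the chosen representatives (by homogeneity of $\eta$ in each slot) and invariant under reordering (by \ref{eta:symetria}).

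First I would dispose of the easy case $r \ge 2$: the vector $u$ then appears in at least two arguments of $\eta$, so \ref{eta:powtorki} forces the expression to vanish identically, and $L \subseteq \hipa$. In the remaining case $r = 1$ the map $\phi(u) := \eta(v_1,\ldots,v_{k-1},u)$ is a linear functional on $\field V$. Intersecting $\ker\phi$ with $\widetilde{L_0}$ yields either all of $\widetilde{L_0}$ (whence $L \subseteq \hipa$) or a one-dimensional subspace (whence $L$ meets $\hipa$ in a unique projective point). In every case $L \cap \hipa \ne \emptyset$, and $|L\cap\hipa| \ge 2$ forces $L \subseteq \hipa$; this yields simultaneously l-transversality and the subspace property.

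Properness is then extracted from the nondegeneracy of $\eta$: pick $v_1,\ldots,v_k$ with $\eta(v_1,\ldots,v_k) \ne 0$. Alternation forces these vectors to be linearly independent (any dependence $v_1 = \sum_{i \ge 2} c_i v_i$ makes $\eta(v_1,\ldots,v_k)$ a sum of terms each with a repeated argument, hence zero), so the projective points $q_i = \langle v_i \rangle$ are pairwise distinct and $q_1+\cdots+q_k$ is a legitimate point of the Veronesian lying outside $\hipa$. The adjective \emph{nondegenerate} then records, in the spirit of the discussion preceding \ref{fct:strongINreduct}, that no first-order leaf $e+S$ is engulfed by $\hipa$ provided $e$ is represented by linearly independent, pairwise distinct vectors: this is visible from the $r=1$ analysis, since for such $e$ the functional $\phi$ cannot be identically zero by nondegeneracy of $\eta$.

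No genuine obstacle arises; the main nuisance is the bookkeeping confirming that \eqref{def:eta2hipa} is well defined on multisets of projective points, which is absorbed entirely by \ref{eta:symetria} and by the multilinear homogeneity of $\eta$. The rest is linear algebra running parallel to, and in fact simplifying, the $k=2$ argument used in \ref{lema:symplect2hipa} and \ref{thm:hipaINprojver}.
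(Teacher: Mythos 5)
The paper states \ref{fct:eta2hipa} as a Fact with no proof at all, so there is nothing in the text to compare against; judged on its own, your argument for the core assertion is correct, and it is precisely the natural $k$-ary extension of the $k=2$ computation in \ref{lema:symplect2hipa}. The case split on the multiplicity $r$ is the right organizing idea: for $r\geq 2$ property \ref{eta:powtorki}) makes the whole line collapse into $\hipa$, and for $r=1$ the linear functional $\phi=\eta(v_1,\dots,v_{k-1},-)$ restricted to the plane $\widetilde{L_0}$ gives ``meets in exactly one point or is contained'', which delivers l-transversality and the subspace property simultaneously; properness follows because a nonvanishing value of $\eta$ forces linear independence, hence pairwise distinctness, of the $q_i$. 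The well-definedness bookkeeping (homogeneity in each slot plus \ref{eta:symetria})) is also handled correctly.

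The one claim that is wrong is the last substantive one, concerning the parenthetical ``nondegenerate''. For $k\geq 3$, nondegeneracy of $\eta$ (for every $v\neq 0$ there exist $v_2,\dots,v_k$ with $\eta(v,v_2,\dots,v_k)\neq 0$) does \emph{not} imply that $\phi=\eta(v_1,\dots,v_{k-1},-)$ is nonzero for every linearly independent family $v_1,\dots,v_{k-1}$; that equivalence is special to $k=2$. Concretely, take $k=3$, $\dim({\field V})=6$ with basis $u_1,\dots,u_6$, and $\eta=u_1^\ast\wedge u_2^\ast\wedge u_3^\ast+u_4^\ast\wedge u_5^\ast\wedge u_6^\ast$: this form is nondegenerate, yet $\eta(u_1,u_4,w)=0$ for every $w$, so the leaf $\bigl(\langle u_1\rangle+\langle u_4\rangle\bigr)+S$ is entirely contained in $\hipa$ even though its support is independent. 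Hence if ``(nondegenerate)'' in \ref{fct:eta2hipa} is meant in the spirit of the discussion preceding \ref{fct:strongINreduct} --- that no level-one leaf with independent support is engulfed --- your justification does not establish it, and indeed the assertion is false in that reading; if it merely records that $\hipa$ is proper, your proof already covers it. Either way, the hyperplane property itself is fully and correctly proved.
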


From the property \ref{eta:powtorki}) of $\eta$, the points of $\fixaf =\VerSpace(k,\fixproj)\setminus\hipa$ i.e.\ the set $\msub_k(S)\setminus\hipa$ are $k$-subsets of $S$ (that we denote by $\sub_k(S)$) and therefore the affine reduct
$\fixaf$ can be 
characterized as follows:

\iffalse
Happily, this substructure does not admit deviations presented in \ref{fct:vergras:generalia}:
if $\linerank$ is the size of lines of \fixproj\ then the size of lines
of $\VerSpace(k,\fixproj)\setminus\hipa$ is $\linerank - 1$. 
Passing over the general theory of affine reducts we can establish this equation 
directly from \ref{fct:vergras:generalia} and \eqref{equ:vergras:bloki}.
Indeed, if $a = \{ q_1,\dots, q_{k-1} \}$ is a $(k-1)$-element subset of $S$ and $L$ is a line
of \fixproj\ then either $|a^\perp \cap L|\geq 2$ and then $a+L\subseteq\hipa$
or $|a^\perp \cap L| = 1$ and then at most one of the elements of $a$ is in $L$.

So, one can characterize directly the structure $\VerSpace(k,\fixproj)\setminus\hipa$ 
as a substructure of $\GrasSpace(k,\fixproj)$: 
\fi
%
\begin{description}\itemsep-2pt
\item[{\normalfont the points of $\fixaf$:}]
  $\left\{ \{q_1,\ldots,q_k\}\in\sub_k(S)\colon \not\perp(q_1,\dots, q_k) \right\}$,
\item[{\normalfont the lines of $\fixaf$:}]
  $\big\{ \{ \{ q_1,\dots, q_{k-1},x \}\colon x \in L, \not\perp(q_1,\dots, q_{k-1},x)\}
  \colon L\in\lines, L\nsubseteq (q_1,\dots, q_{k-1})^\perp \big\}$.
\end{description}

The structures obtained are more `Grassmannians' than `Veronesians':
they are defined on sets {\em without repetitions}! 
The problem to enter deeper into geometry of such reducts of Grassmannians is
certanly interesting, but it is not the goal of this paper.
\iffalse
Let us indicate only one connection with some other celebrated geometry: 
Let $u_1,\ldots,u_k$ be vectors of $\field V$. The set 
$\{\gen{u_1},\ldots,\gen{u_k}\}\in\sub_k(S)$ is a point of the affine reduct $\fixaf$
iff vectors $u_1,\ldots,u_k$ are linearly independent i.e.\ when they form a basis of 
a $k$-subspace of $\field V$. So, $\fixaf$ is the structure of (unordered) bases of $\fixproj$.
Moreover, the map 
%
\begin{ctext}
  a point $\{q_1,\ldots,q_k\}$ of \fixaf \quad $\longmapsto \quad  
  \overline{ \{q_1,\ldots,q_k\}}$: a $k$-subspace of $\field V$
\end{ctext}
%
is a monomorphism of $\fixaf$ onto the Grassmann space (space of pencils) 
$\PencSpace(k,{\field V}) \cong \PencSpace(k-1,\fixproj)$.

\fi
\iffalse
\begin{center}\Large
  Questions:
  \\
  Should we repeat a portion of the theory from the previous section,
  should we begin with this general construction,
  should we only mention this construction and say a few words on it: in essence it is more
  a theory of Grassmannians than of Veronesians, since {\em repetition} in the considered
  sets is not allowed!!!
\end{center}
\fi

%%%%%%%%%%%%%%%%%%%%%%%%%%%%%%%%%%%%%%%%%%%%%%%%%%%%%%%
%%%%%%%%%%%%%%%%%%%%%%%%%%%%%%%%%%%%%%%%%%%%%%%%%%%%%%% subsect: VERPOLAR
%%%%%%%%%%%%%%%%%%%%%%%%%%%%%%%%%%%%%%%%%%%%%%%%%%%%%%%

\subsection{More sophisticated example: Veronese  spaces associated
with polar spaces}

Let us begin with three rather evident observations

\begin{fact}\label{fct:sub2hipa}
  \iffalse
  Let $\hipa$ be a hyperplane in a partial linear space 
  ${\goth N} = \struct{S,\lines}$, let $S_0\subseteq S$.
  Set
  $\lines[S_0] = \{ L\in\lines\colon L\subseteq S_0 \}$
  and let $\lines_0\subseteq\lines[S_0]$.
  Then 
  $\hipa\cap S_0$ is an l-transversal subset in 
  $\struct{S_0,\lines_0}$.
  \fi
  Let\/ $\hipa$ be a hyperplane in a partial linear space\/
  ${\goth N} = \struct{S,\lines}$ and let\/ $S_0\subseteq S$.
  Set\/
  $\lines[S_0] = \{ L\in\lines\colon L\subseteq S_0 \}$.
  If\/  $\lines_0\subseteq\lines[S_0]$, then\/
  $\hipa\cap S_0$ is an l-transversal subset in 
  $\struct{S_0,\lines_0}$.
\end{fact}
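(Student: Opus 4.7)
The statement is essentially a routine unfolding of the definitions, so my plan is to just chase them. The hypothesis gives two ingredients: that $\hipa$ meets every line of $\lines$ (because $\hipa$ is a hyperplane of $\goth N$, hence l-transversal) and that $\lines_0$ consists entirely of lines of $\goth N$ whose point sets sit inside $S_0$. The conclusion asks only that $\hipa\cap S_0$ hit every member of $\lines_0$.

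First I would fix an arbitrary $L\in\lines_0$. By the assumption $\lines_0\subseteq\lines[S_0]$, we have $L\in\lines$ and $L\subseteq S_0$. Since $\hipa$ is l-transversal in $\goth N$, there is some point $x\in L\cap\hipa$. The inclusion $L\subseteq S_0$ then forces $x\in S_0$, so $x\in L\cap(\hipa\cap S_0)$, and $\hipa\cap S_0$ is l-transversal in $\struct{S_0,\lines_0}$.

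There is really no obstacle here; the one thing worth noting is that the hypothesis $\lines_0\subseteq\lines[S_0]$, rather than the weaker $\lines_0\subseteq\lines$, is what is actually used, because without $L\subseteq S_0$ the witness $x\in L\cap\hipa$ supplied by l-transversality of $\hipa$ need not lie in $S_0$. No statement about $\hipa\cap S_0$ being a subspace of $\struct{S_0,\lines_0}$ is being claimed, and nothing about properness is needed, so the proof is complete after the single line above.
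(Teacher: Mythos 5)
Your argument is correct and is exactly the definition-chasing the paper has in mind: the paper states this fact without proof as one of "three rather evident observations," and the one-line witness argument you give (pick $x\in L\cap\hipa$ by l-transversality of $\hipa$ in $\goth N$, then use $L\subseteq S_0$ to place $x$ in $\hipa\cap S_0$) is the intended justification. Nothing is missing.
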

In the notation of the fact \ref{fct:sub2hipa} we write
${\goth N}[S_0] = \struct{S_0,\lines[S_0]}$. 
Generally, ${\goth N}[S_0]$ needs not to be a partial linear space,
but for some `nonsense-reasons' only: its line set may be empty, it may
have isolated points etc.

\begin{fact}\label{fct:ver4sub:points}
  Let\/ ${\goth M}_0 =\struct{S_0,\lines_0}$ be a partial linear space
  and let\/ $S'_0\subseteq S_0$. %%, $\lines'_0\subseteq\lines_0[S_0]$.
  Then\/
  $\VerSpace(k,\struct{S'_0,\lines[S'_0]})$ and\/
  $\VerSpace(k,\struct{S_0,\lines_0})[\msub_k(S'_0)]$ coincide.
\end{fact}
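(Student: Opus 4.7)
The plan is to check that the two structures have the same point set and the same block set, by unfolding the definitions \eqref{def:verspace} and \eqref{def:verbloki} on both sides. On the point side, $\VerSpace(k,\struct{S'_0,\lines[S'_0]})$ has, by \eqref{def:verspace}, the point universe $\msub_k(S'_0)$, which is exactly the universe of the restriction $\VerSpace(k,\struct{S_0,\lines_0})[\msub_k(S'_0)]$; so the point sets coincide automatically.

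For the blocks, I unfold \eqref{def:verbloki} on each side. A block of $\VerSpace(k,\struct{S'_0,\lines[S'_0]})$ is a set of the form $\{e+rx\colon x\in B\}$ with $0<r\leq k$, $e\in\msub_{k-r}(S'_0)$ and $B\in\lines_0$ satisfying $B\subseteq S'_0$. A block of $\VerSpace(k,\struct{S_0,\lines_0})[\msub_k(S'_0)]$ is a set of the form $\{e+rx\colon x\in B\}$ with $0<r\leq k$, $e\in\msub_{k-r}(S_0)$ and $B\in\lines_0$, subject to the extra condition that the whole set lies inside $\msub_k(S'_0)$. So the only thing to verify is that this containment condition is equivalent to requiring $e\in\msub_{k-r}(S'_0)$ together with $B\in\lines[S'_0]$.

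This equivalence is the main (and essentially only) point, and it rests on the hypothesis $r\geq 1$ built into \eqref{def:verbloki}. Indeed, if $\{e+rx\colon x\in B\}\subseteq\msub_k(S'_0)$, then for every $x\in B$ the multiset $e+rx$ is supported in $S'_0$; since $r\geq 1$, its support contains $x$, forcing $x\in S'_0$, and hence $B\subseteq S'_0$, i.e.\ $B\in\lines[S'_0]$. Picking any $x\in B\subseteq S'_0$ and subtracting shows $\suport(e)\subseteq S'_0$, i.e.\ $e\in\msub_{k-r}(S'_0)$. The converse implication is immediate: if $e\in\msub_{k-r}(S'_0)$ and $B\subseteq S'_0$, then $\suport(e+rx)\subseteq\suport(e)\cup\{x\}\subseteq S'_0$ for each $x\in B$. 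Consequently, the two parameterizations describe the same family of blocks, and the two structures coincide.

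I do not anticipate any real obstacle here; the statement is essentially a bookkeeping check. The only subtlety is making sure the condition $r\geq 1$ (rather than $r\geq 0$) is what forces the lines $B$ used to define restricted blocks to satisfy $B\subseteq S'_0$; had multiplicity $r=0$ been allowed, a line entirely outside $S'_0$ could produce a block inside $\msub_k(S'_0)$ and the two structures would differ.
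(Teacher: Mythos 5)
Your proof is correct and follows essentially the same route as the paper: identify the point sets from the definitions, then check in both directions that a block of one structure is a block of the other by tracking the supports of $e$ and $B$. Your explicit remark that $r\geq 1$ is what forces $B\subseteq S'_0$ is a useful clarification that the paper's own (terser) argument leaves implicit.
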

\begin{proof}
  The point sets of both structures are equal: just from definitions.
  \par
  Let $L$ be a line of $\VerSpace(k,\struct{S'_0,\lines[S'_0]})$.
  So, $L = e + (k-|e|)L_0$, where: 
  $L_0\in\lines[S'_0]\subseteq\lines_0$, so  $L_0\subseteq S'_0$,
  and $e\in\msub_k(S'_0)\subseteq\msub_k(S_0)$.
  Finally, $L$ is a line of $\VerSpace(k,\struct{S_0,\lines_0})$
  and $L\subseteq\msub_k(S'_0)$.
%%  \newline
  Conversely, let $L = e+(k-|e|)L_0$ be a line of $\VerSpace(k,\struct{S_0,\lines_0})$.
  Assume that $L\subseteq\msub_k(S'_0)$. Then $e\in\msub_k(S'_0)$ and
  $L_0\subseteq S'_0$, $L_0\in\lines_0$ and thus $L$ is a line of 
  $\VerSpace(k,\struct{S'_0,\lines[S'_0]})$.
\end{proof}

\begin{fact}\label{fct:ver4sub:lines}
  If\/ ${\goth M}' = \struct{S,\lines'}$ and\/ ${\goth M}'' = \struct{S,\lines''}$ are partial linear 
  spaces such that\/ $\lines''\subseteq\lines'$ then the line set of\/
  $\VerSpace(k,{\goth M}'')$ is a subset of the line set of\/ $\VerSpace(k,{\goth M}')$.
\end{fact}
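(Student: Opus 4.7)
The plan is to simply unpack the definition \eqref{def:verbloki} and observe that it is monotone in the line set of the base structure. A generic line of $\VerSpace(k,{\goth M}'')$ is, by \eqref{def:verbloki}, a set of the form
\begin{cmath}
  L \;=\; \{\, e + r x \colon x\in B \,\},
\end{cmath}
where $B \in \lines''$, $e \in \msub_{k-r}(S)$, and $0 < r \leq k$. I would then note that, because ${\goth M}'$ and ${\goth M}''$ share the same point set $S$, the multiset $e$ lives equally in $\msub_{k-r}(S)$ regardless of which of the two structures we are considering, and that the assumption $\lines'' \subseteq \lines'$ forces $B \in \lines'$. Hence $L$ is also produced by the construction \eqref{def:verbloki} when applied to ${\goth M}'$, and is thus a line of $\VerSpace(k,{\goth M}')$. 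Since $L$ was arbitrary, the inclusion of line sets follows.

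I do not foresee any obstacle: the statement is essentially a tautology expressing the fact that the operator $\bloki \mapsto \bloki^\upcircled{k}$ in \eqref{def:verbloki} is monotone with respect to $\subseteq$, and the proof is just a one-line reading of the definition. Nothing beyond \eqref{def:verbloki} and the fact that both spaces share the common point set $S$ is needed; in particular the partial linear space hypothesis on ${\goth M}',{\goth M}''$ is not used in this direction (it would only be needed if one wanted to identify lines with their point sets without ambiguity, which is already taken care of by the convention adopted in the paper).
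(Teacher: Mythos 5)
Your proof is correct and matches the paper's intent exactly: the paper states this fact without proof as one of ``three rather evident observations,'' and the intended justification is precisely the one-line monotonicity of the operator $\bloki\mapsto\bloki^{\upcircled{k}}$ in \eqref{def:verbloki} that you give. Your remark that the partial linear space hypothesis plays no role here is also accurate.
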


Let us quote the standard models of polar spaces (in what follows 
{\em a polar space} will always mean one of the below).
Let $\varpi$ be a polarity in $\fixproj = \struct{S,\lines}$; 
let 
$\Quadr_0(\varpi) = \{ p\colon p\in\varpi(p) \}$
be the set of points of $\fixproj$ that are self-conjugate under $\varpi$
and 
$\Quadr_1(\varpi) = \{ L\colon L\subseteq\varpi(L) \}$
be the set of selfconjugate lines.
Then the polar space determined by $\varpi$ is the structure 
$\PolarSpace(\varpi) := \struct{\Quadr_0(\varpi),\Quadr_1(\varpi)}$
provided $\varpi$ is symplectic or $\varpi$ is quadratic with index at least 2
i.e.\ $\Quadr_2(\varpi)\neq \emptyset$.
In corresponding cases we have 
$\PolarSpace(\varpi) = \struct{S,\Quadr_1(\varpi)}$ 
and
$\PolarSpace(\varpi) = \fixproj[\Quadr_0(\varpi)]$.
%
\iffalse
%
\begin{description}\itemsep-2pt
\item[{\normalfont $\PolarSpace(\varpi) = \struct{S,\Quadr_1(\varpi)}$}]
  when $\varpi$ is symplectic;
\item[{\normalfont $\PolarSpace(\varpi) = \fixproj[\Quadr_0(\varpi)]$}]
  is a `quadratic' polar space when $\varpi$ has index at least 1 i.e.\ 
  $\Quadr_1(\varpi)\neq \emptyset$.
\item[$\PolarSpace(\varpi)$ is a nonsense]
  when $\varpi$ is elliptic ($\Quadr_0(\varpi) = \emptyset$) or it is
  hyperbolic (in the old ordinary meaning: $\Quadr_0(\varpi)$ is 
  a projective sphere).
\end{description}
%
In the sequel we assume even more: that $\Quadr_2(\varpi)\neq\emptyset$ i.e.\ \fixproj\
contains also self-conjugate planes
\warning[do we need this when we deal with the
symplectic case\why].
\fi
%
Let us recall also that an {\em affine polar space} is an affine reduct of 
a polar space obtained by deleting a hyperplane of it. In our approach this 
hyperplane can be always considered as the restriction to $\Quadr_0$ of a hyperplane
of \fixproj.

The following will be needed, that is known in the literature.

\begin{fact}\label{fct:mocne:polar}
  A maximal strong subspace of a polar space  (of an affine polar space) is
  a projective (affine, resp.) subspace of\/ \fixproj\ (of the affine reduct of\/ \fixproj).
  \par
  Any two planes of a polar space and of an affine polar space can be joined 
  by a sequence of planes where each two consecutive planes share a line 
  (polar spaces and affine polar spaces are {\em strongly connected}, cf.\  {\upshape\cite{naumo}}).
\end{fact}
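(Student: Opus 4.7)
For the first claim, the plan is to translate the definition of the polar space back to the data of the polarity. A maximal strong subspace $X$ of $\PolarSpace(\varpi)$ is a set of points of $\fixproj$ lying in $\Quadr_0(\varpi)$ whose any two are joined by a line $L\in\Quadr_1(\varpi)$; equivalently, $X$ consists of pairwise $\varpi$-conjugate self-conjugate points. First I would show that the projective span $\overline{X}$ in $\fixproj$ still consists of pairwise conjugate self-conjugate points. For the symplectic case this is immediate from the linearity of $\varpi$; for the quadratic case one uses, on top of that, the fact that the quadratic form vanishes on a span of mutually orthogonal isotropic vectors (so each point of $\overline{X}$ lies in $\Quadr_0(\varpi)$). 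Hence every projective line of $\overline{X}$ is self-conjugate and $\overline{X}$ is a strong subspace of $\PolarSpace(\varpi)$ containing $X$. Maximality of $X$ forces $X=\overline{X}$, so $X$ is a projective subspace of $\fixproj$. For the affine case one applies the same argument to the projective polar space and intersects with the complement of the deleted hyperplane: the maximal strong subspaces of the affine polar space are precisely the restrictions $\overline{X}\setminus \hipa$, hence they are affine subspaces of the corresponding affine reduct of $\fixproj$.

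For the second claim, the plan is to pass through the collinearity graph of $\PolarSpace(\varpi)$. Given two planes $\Pi_1,\Pi_2$, pick $p\in\Pi_1$ and $q\in\Pi_2$; by connectedness of the polar space choose a sequence $p=p_0,p_1,\ldots,p_n=q$ with $p_i\adjac p_{i+1}$, so each pair spans a self-conjugate line $l_i$. Because the index is at least $2$, each $l_i$ is contained in some plane $\Sigma_i$ of $\PolarSpace(\varpi)$; set $\Sigma_0:=\Pi_1$ and $\Sigma_n:=\Pi_2$. To turn this ``chain of planes meeting in a point'' into a ``chain of planes meeting in lines'' I would insert, between $\Sigma_i$ and $\Sigma_{i+1}$, auxiliary planes exploiting the Buekenhout--Shult one-or-all axiom: the point $\Sigma_i\cap\Sigma_{i+1}$ together with any line of $\Sigma_{i+1}$ through it is coplanar with a suitable line of $\Sigma_i$, producing an intermediate plane sharing a line with each. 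The same recipe, applied in the affine reduct and noting that the deleted hyperplane meets each maximal singular subspace in a proper subspace, yields strong connectedness of the affine polar space.

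The main obstacle is the last step: arranging that consecutive planes meet in a line rather than in a single point. This is exactly the place where one must use the polar-space axiom in its full strength, and where the argument depends on the index-$\geq 2$ assumption and the structure of pencils of planes through a common line. Modulo this standard polar-geometric manoeuvre (for which a clean reference is \cite{naumo}), the rest of the proof is a straightforward unfolding of the definition of $\PolarSpace(\varpi)$ and its affine reducts.
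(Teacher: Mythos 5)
The paper offers no proof of this statement at all: it is quoted as a fact known from the literature, with \cite{naumo} cited for the strong-connectedness part, so there is no in-paper argument to compare yours against. Judged on its own merits, your first paragraph is a correct and essentially complete proof of the first claim: the linear span of pairwise conjugate self-conjugate points is totally isotropic (trivially in the symplectic case; in the quadratic case because $Q(\alpha u+\beta v)=\alpha^2Q(u)+\beta^2Q(v)+\alpha\beta B(u,v)=0$ for singular, mutually orthogonal $u,v$), so the projective span of a maximal strong subspace is again strong and maximality forces equality; the affine statement follows by taking traces on the complement of the deleted hyperplane.

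The second claim is where your argument is genuinely incomplete, and you say so yourself. The ``point-to-line upgrade'' is in fact routine and you should carry it out: if planes $\Pi_1,\Pi_2$ share a point $p$, pick $q\in\Pi_2\setminus\{p\}$; by the one-or-all axiom the set of points of $\Pi_1$ collinear with $q$ is either all of $\Pi_1$ or a line $m$ of $\Pi_1$, and it contains $p$ in either case; then $\langle m,q\rangle$ (resp.\ $\langle m',q\rangle$ for any line $m'\ni p$ of $\Pi_1$ in the ``all'' case) is a plane meeting $\Pi_1$ in a line and meeting $\Pi_2$ in the line $pq$. Combined with your chain of collinear points this settles the projective case completely. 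What your sketch underestimates is the affine case: there consecutive planes must share a line \emph{not contained in the deleted hyperplane}, and the auxiliary line $m$ produced above may lie entirely inside it; one must perturb the chain, using that the hyperplane meets each singular plane in a proper subspace so that lines surviving in the affine reduct are plentiful. That is precisely the point for which the paper defers to \cite{naumo}, and it is the one place where your proposal leaves a real gap rather than a routine omission.
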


Let us consider the structure 
${\goth M} := \VerSpace(k,{\PolarSpace(\varpi)})$.
Then as an immediate consequence of \ref{fct:ver4sub:lines}
and \ref{fct:ver4sub:points} resp. we have
\begin{lem}\label{lem:proj2polar}\strut
\begin{sentences}\itemsep-2pt
\item 
  When $\varpi$ is symplectic, 
  the line set of 
  $\goth M$ is a subset of the line set of\/ $\VerSpace(k,\fixproj)$,
\item
  ${\goth M} = \VerSpace(k,\fixproj)[\msub_k(\Quadr_0(\varpi))]$
  when $\varpi$ is quadratic.
\end{sentences}
\end{lem}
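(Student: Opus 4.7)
The plan is to dispatch each clause by directly applying the corresponding fact that immediately precedes the statement, after recalling which of the two ``shapes'' of a polar space is at hand.

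For clause (i), recall that when $\varpi$ is symplectic every point of $\fixproj$ is self-conjugate, so $\PolarSpace(\varpi) = \struct{S,\Quadr_1(\varpi)}$ shares its point universe with $\fixproj = \struct{S,\lines}$, while its line set $\Quadr_1(\varpi)$ consists of selfconjugate lines and is therefore a subset of $\lines$. I would then apply \ref{fct:ver4sub:lines} with ${\goth M}' = \fixproj$ and ${\goth M}'' = \PolarSpace(\varpi)$; the conclusion is exactly that the line set of $\VerSpace(k,\PolarSpace(\varpi))$ is contained in the line set of $\VerSpace(k,\fixproj)$. The only subtlety is to note that both are genuine partial linear spaces so that \ref{fct:ver4sub:lines} applies, which is immediate from the general theorem that a Veronese space over a \plsX\ is a \plsX, recalled after \eqref{def:combveron}.

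For clause (ii), the hypothesis $\varpi$ quadratic gives $\PolarSpace(\varpi) = \fixproj[\Quadr_0(\varpi)]$, i.e.\ the restriction of $\fixproj$ to its self-conjugate points in the notation introduced just after \ref{fct:sub2hipa}. I would apply \ref{fct:ver4sub:points} with ${\goth M}_0 = \fixproj$ (so $S_0 = S$, $\lines_0 = \lines$) and $S'_0 = \Quadr_0(\varpi)$. The fact yields
\[
  \VerSpace(k,\fixproj[\Quadr_0(\varpi)]) \;=\; \VerSpace(k,\fixproj)[\msub_k(\Quadr_0(\varpi))],
\]
and the left-hand side is $\goth M$ by our reading of $\PolarSpace(\varpi)$ in the quadratic case. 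This is precisely the claimed identity.

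There is no real obstacle: the lemma is a matter of matching the two ``flavours'' of polar space (line-restricted in the symplectic case, point-restricted in the quadratic case) to the two preparatory facts about how $\VerSpace(k,-)$ interacts with subsets of lines and of points respectively. The mild care needed is simply to verify the hypotheses of the cited facts (partial linearity of the relevant substructures), which is standard for the classical models of polar spaces recalled before the lemma.
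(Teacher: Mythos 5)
Your proposal is correct and follows exactly the route the paper takes: the paper derives the lemma as an ``immediate consequence of \ref{fct:ver4sub:lines} and \ref{fct:ver4sub:points} resp.'', using $\PolarSpace(\varpi)=\struct{S,\Quadr_1(\varpi)}$ in the symplectic case and $\PolarSpace(\varpi)=\fixproj[\Quadr_0(\varpi)]$ in the quadratic case, just as you do. Your extra remarks on verifying partial linearity are a harmless elaboration of what the paper leaves implicit.
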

Let $\hipa$ be a hyperplane of $\VerSpace(k,\fixproj)$
determined by a polarity $\varkappa$.  
From \ref{fct:sub2hipa} and \ref{lem:proj2polar} we obtain
\begin{fact}\label{fact:hipyiverpolar}
  The set 
  $\hipa\cap\msub_k(\Quadr_0(\varpi))$ 
  is a hyperplane in\/ $\goth M$.
%  \warning[is it a general form of ANY bhyperplane in $\goth M$\why]
\end{fact}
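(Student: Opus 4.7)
The plan is to verify that $\hipa' := \hipa \cap \msub_k(\Quadr_0(\varpi))$ satisfies the three defining conditions of a hyperplane in $\goth M$: l-transversality, the subspace property, and proper containment. The first two drop out of the existing infrastructure once one observes, via Lemma \ref{lem:proj2polar}, that every line $L$ of $\goth M$ is a line of $\VerSpace(k,\fixproj)$ contained in $\msub_k(\Quadr_0(\varpi))$: in the symplectic case this is clause (a) combined with $\Quadr_0(\varpi)=S$, and in the quadratic case it is clause (b). Applying Fact \ref{fct:sub2hipa} with $\goth N = \VerSpace(k,\fixproj)$ and $S_0 = \msub_k(\Quadr_0(\varpi))$ immediately yields l-transversality.

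For the subspace property I would argue directly: if a line $L$ of $\goth M$ shares at least two points with $\hipa'$, then the same $L$ viewed as a line of $\VerSpace(k,\fixproj)$ meets the subspace $\hipa$ in at least two points, hence is entirely contained in $\hipa$; combined with $L \subseteq \msub_k(\Quadr_0(\varpi))$ this gives $L \subseteq \hipa'$.

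The only nontrivial step is to exhibit a multiset in $\msub_k(\Quadr_0(\varpi))$ not lying in $\hipa$. The symplectic case is trivial since $\Quadr_0(\varpi) = S$ forces $\hipa' = \hipa$, which is proper by Fact \ref{fct:eta2hipa}. For the quadratic case, I plan to invoke the standard spanning property of nondegenerate polar spaces of index at least $2$, namely that $\Quadr_0(\varpi)$ spans $\field V$. Choosing vectors $u_1,\dots,u_n \in \Quadr_0(\varpi)$ spanning $\field V$, multilinearity of $\eta$ shows that if $\eta$ vanished on every $k$-tuple drawn from $\{u_1,\dots,u_n\}$ then $\eta \equiv 0$, contradicting nondegeneracy. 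Hence there exist $v_1,\dots,v_k \in \Quadr_0(\varpi)$ with $\eta(v_1,\dots,v_k) \neq 0$; property (b) of $\eta$ forces these points to be pairwise distinct, so the multiset $\gen{v_1} + \cdots + \gen{v_k}$ is the required witness.

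The main obstacle is precisely this last appeal to the spanning property: the assumption that $\varpi$ has positive index (guaranteed by $\Quadr_2(\varpi)\neq\emptyset$) is where the proof genuinely uses information about $\varpi$ rather than just about $\eta$. Everything else is bookkeeping through Fact \ref{fct:sub2hipa} and Lemma \ref{lem:proj2polar}.
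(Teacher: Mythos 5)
Your proof is correct and takes essentially the same route as the paper, which obtains this fact directly from Fact \ref{fct:sub2hipa} together with Lemma \ref{lem:proj2polar}: every line of ${\goth M}$ is a line of $\VerSpace(k,\fixproj)$ lying inside $\msub_k(\Quadr_0(\varpi))$, whence l-transversality and the subspace property both transfer from $\hipa$. The only point where you go beyond the paper is the properness check, which the paper leaves implicit; your witness for the quadratic case (that $\Quadr_0(\varpi)$ spans the underlying vector space, so by multilinearity $\eta$ cannot vanish on all $k$-tuples of points of the quadric) is exactly what is needed there, since properness of $\hipa$ in $\VerSpace(k,\fixproj)$ alone does not produce a witness inside $\msub_k(\Quadr_0(\varpi))$.
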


We have no proof, but it seems that

\begin{conj}
  Those of the form in {\upshape\ref{fact:hipyiverpolar}} are the only 
  possible hyperplanes in $\goth M$.
\end{conj}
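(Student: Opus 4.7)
The plan is to mirror the proof of \ref{thm:hipaINprojver} and recover, from any hyperplane $\hipa'$ of $\goth M$, a hyperplane $\hipa$ of $\VerSpace(k,\fixproj)$ such that $\hipa' = \hipa\cap\msub_k(\Quadr_0(\varpi))$. The starting point is the leaf analysis from \eqref{def:h}: for each $e\in\wsub_k(\Quadr_0)$, the intersection of $\hipa'$ with the leaf $e+(k-|e|)\Quadr_0$ is l-transversal in that leaf, hence determines either the whole $\Quadr_0$ or a hyperplane $\hipfun'_e$ of $\PolarSpace(\varpi)$. So $\hipa'$ is encoded by a function $\hipfun':\wsub_k(\Quadr_0)\to\hipy'\cup\{\Quadr_0\}$, where $\hipy'$ is the family of hyperplanes of $\PolarSpace(\varpi)$.

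The first key step invokes the classification of hyperplanes of polar spaces of sufficient rank (which is available thanks to $\Quadr_2(\varpi)\neq\emptyset$ and the strong connectedness of $\PolarSpace(\varpi)$ recalled in \ref{fct:mocne:polar}): every proper hyperplane of $\PolarSpace(\varpi)$ is singular, i.e.\ has the form $\varpi(a)\cap\Quadr_0$ for a unique point $a\in S$. Setting $\hipfun_e:=\varpi(a_e)$ whenever $\hipfun'_e\neq\Quadr_0$ (and $\hipfun_e=S$ otherwise) promotes $\hipfun'$ to a function defined on $\wsub_k(\Quadr_0)$ with values in the hyperplanes of $\fixproj$. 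The second step is to show that this function extends consistently to $\wsub_k(S)$ and yields a function of the form \eqref{def:h} determining a hyperplane of $\VerSpace(k,\fixproj)$; the resulting $\hipa$ will then automatically satisfy $\hipa\cap\msub_k(\Quadr_0)=\hipa'$ by construction, finishing the argument.

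For the case $k=2$, the coherence can be phrased, as in the proof of \ref{thm:hipaINprojver}, via the symmetric binary relation $x\perp y\iff x+y\in\hipa'$ on $\Quadr_0$. The identity $y\in\varpi(a_x)\iff x\in\varpi(a_y)$ (symmetry of $\perp$) together with the subspace property of $\hipa'$ forces $x\mapsto a_x$ to be the restriction to $\Quadr_0$ of a linear-like map, so by the sesquilinear rigidity results cited in \cite{muzal} the relation $\perp$ is the restriction of a form $\xi$ on $\field V$. Composing $\xi$ with the symplectic form defining $\varpi$ provides the desired quasi-correlation $\varkappa$ on $\fixproj$, and \ref{thm:hipaINprojver} identifies $\hipa$ as the hyperplane of $\VerSpace(2,\fixproj)$ determined by $\varkappa$. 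For $k>2$ one proceeds analogously by studying the symmetric $k$-ary relation $\perp(q_1,\ldots,q_k):\iff q_1+\cdots+q_k\in\hipa'$ and recognising it, via the multilinear analogue of \cite{muzal}, as arising from an alternating $k$-linear form on $\field V$, which by \ref{fct:eta2hipa} reproduces $\hipa$.

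The main obstacle is the extension/coherence step across points of $S\setminus\Quadr_0$: the function $\hipfun$ is only naturally defined on the multisets supported in $\Quadr_0$, and one must show it extends uniquely. Concretely, for $e\in\wsub_k(S)\setminus\wsub_k(\Quadr_0)$ the value $\hipfun_e$ is not directly visible from $\hipa'$ (the leaf $e+(k-|e|)S$ is not a leaf of $\goth M$), and the extension must be forced by the algebraic rigidity of the underlying (multi)linear form. This reduces, after the classification step, to a standard but delicate reconstruction of a reflexive/alternating form on $\field V$ from its restriction to an orbit of isotropic vectors (spanning $\field V$ because $\PolarSpace(\varpi)$ has nondegenerate support). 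Once this rigidity is in place, the equality $\hipa'=\hipa\cap\msub_k(\Quadr_0)$ follows automatically and the conjecture is established.
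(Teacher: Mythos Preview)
The paper offers no proof of this statement: it is explicitly labelled a conjecture, prefaced by ``We have no proof, but it seems that''. So there is nothing to compare your argument against; the question is only whether your sketch actually closes.

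It does not. Two steps are genuinely problematic.

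First, and most concretely: the subspace property of $\hipa'$ in $\goth M=\VerSpace(k,{\PolarSpace(\varpi)})$ constrains only the blocks $e+rL_0$ with $L_0\in\Quadr_1(\varpi)$, i.e.\ with $L_0$ an \emph{isotropic} line. When you try to imitate \ref{thm:hipaINprojver}, the crucial step is \ref{lem:vari1}, which forces $\hipfun(0)=S$ (equivalently $2S\subseteq\hipa$, equivalently $\varkappa$ is symplectic). That argument uses the line $\LineOn(a,q)$ for an arbitrary pair $a,q$, and nothing guarantees this line is $\varpi$-isotropic. So you cannot conclude $2\Quadr_0\subseteq\hipa'$ from the polar subspace property alone, and without that you have no control over $\hipfun(0)$ and cannot invoke \cite{muzal}. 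The same difficulty infects the promotion of each $\hipfun'_e$ to a \emph{projective} hyperplane: you assert that every geometric hyperplane of $\PolarSpace(\varpi)$ is of the form $\varpi(a)\cap\Quadr_0$, but this is a substantial classification theorem that fails in low rank (rank-$2$ polar spaces may have ovoids as hyperplanes) and is not supplied by anything in the paper, \ref{fct:mocne:polar} included.

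Second, for $k>2$ your reduction does not land anywhere. You appeal to \ref{fct:eta2hipa}, but that fact only constructs hyperplanes of $\VerSpace(k,\fixproj)$ from alternating $k$-forms; there is no converse in the paper (no analogue of \ref{thm:hipaINprojver} for $k>2$), so even a successful reduction to $\VerSpace(k,\fixproj)$ would not finish. The ``multilinear analogue of \cite{muzal}'' you invoke is likewise not available here. In short, your outline identifies the right shape of argument, but the places where the polar structure genuinely differs from the projective one (fewer lines, hence weaker subspace constraints; a nontrivial hyperplane classification; no $k>2$ converse) are exactly the places where the sketch is not a proof. This is presumably why the authors left it as a conjecture.
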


In the sequel we write simply ${\goth M}\setminus\hipa$ instead of
${\goth M}\setminus(\hipa\cap\msub_k(\Quadr_0(\varpi)))$.

\begin{lem}
  The leaves of\/ $\goth M$ and their restrictions in 
  ${\goth M}\setminus\hipa$
  are definable in the internal geometry of $\goth M$
  (of ${\goth M}\setminus\hipa$, resp.).
\end{lem}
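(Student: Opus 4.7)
The plan is to characterise each leaf of ${\goth M}$ intrinsically as the point set covered by a single `connected component' of maximal strong subspaces, where the connectivity relation is `sharing a plane'. Concretely, I would declare $M_1\sim M_2$ for maximal strong subspaces $M_1,M_2$ of ${\goth M}$ whenever $M_1\cap M_2$ contains a plane, let $\approx$ be the transitive closure of $\sim$, and claim that for each leaf $\Lambda$ the maximal strong subspaces sitting in $\Lambda$ form exactly one $\approx$-class.

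The first step is to show that every maximal strong subspace $M$ of ${\goth M}$ is contained in a single leaf and corresponds, via a leaf embedding of Fact~\ref{fct:embed:gener}, to a maximal singular subspace of the polar space $\PolarSpace(\varpi)$. As a subspace with more than one point, $M$ must contain a whole line $L$, which itself lies in one leaf $\Lambda$. Any other point of $M$ is adjacent to every point of $L$ by strongness, hence to at least three points of $L$; so by Fact~\ref{fct:beztroj} it lies in $\Lambda$. Thus $M\subseteq\Lambda$, and maximality of $M$ in ${\goth M}$ then corresponds to maximality of the associated singular subspace inside $\PolarSpace(\varpi)$.

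The second step is to control when two maximal strong subspaces share a plane. If $M_1,M_2$ lie in distinct leaves $\Lambda_1,\Lambda_2$, then $M_1\cap M_2\subseteq\Lambda_1\cap\Lambda_2$; a direct computation with multiset equalities shows $\Lambda_1\cap\Lambda_2$ is far too small to contain a plane — for $k=2$ it is at most a single point. So $M_1\sim M_2$ already forces $\Lambda_1=\Lambda_2$. Conversely, inside one leaf $\Lambda\cong\PolarSpace(\varpi)$, the strong connectedness of polar spaces guaranteed by Fact~\ref{fct:mocne:polar} yields, for any two maximal singular subspaces, a chain whose consecutive members meet in a plane; transported through the leaf embedding this shows that any two maximal strong subspaces of $\Lambda$ are $\approx$-equivalent. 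Since every point of a polar space lies in some maximal singular subspace, $\Lambda=\bigcup\{M : M \text{ is maximal strong},\; M\approx M_0\}$ for any representative $M_0\subseteq\Lambda$.

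All the ingredients — strong subspace, plane, `sharing a plane', transitive closure — are expressible in the internal incidence geometry of ${\goth M}$, so the leaves are definable. The same equivalence works verbatim in the affine reduct $\fixaf={\goth M}\setminus\hipa$: after checking that maximal strong subspaces of $\fixaf$ are exactly the restrictions of the maximal strong subspaces of ${\goth M}$, one invokes the second clause of Fact~\ref{fct:mocne:polar} (affine polar spaces are likewise strongly connected) to obtain the definability of the leaves of $\fixaf$. The principal technical hurdle I anticipate is the second step, specifically the explicit bound on $\Lambda_1\cap\Lambda_2$: for $k>2$ two distinct leaves can be parametrised in several different ways, and the multiset arithmetic ruling out a shared plane needs to be organised carefully; `diagonal' pairwise-adjacent configurations like $\{2x,2y,x+y\}$ (which fail the subspace axiom) illustrate why one must handle this case-by-case rather than merely by counting.
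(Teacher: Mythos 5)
Your overall strategy coincides with the paper's: show via Fact~\ref{fct:beztroj} that every strong subspace of $\goth M$ lies inside a single leaf, observe that two distinct leaves share at most one point (so no connecting chain can jump between leaves), and then use the strong connectedness of polar spaces and affine polar spaces from Fact~\ref{fct:mocne:polar} to recover each leaf as a connected component. The paper implements the last step with a point-to-point relation $\gamma$: two points are related when they can be joined by a chain of \emph{planes} in which consecutive planes share a \emph{line} --- which is literally what Fact~\ref{fct:mocne:polar} provides.

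Your variant --- chaining \emph{maximal} strong subspaces that share a \emph{plane} --- has a genuine defect. The paper assumes only $\Quadr_2(\varpi)\neq\emptyset$, so the polar space may have rank $3$; then the maximal singular subspaces \emph{are} planes, two distinct ones never share a plane, your relation $\sim$ degenerates to equality, each $\approx$-class is a single maximal strong subspace, and its union is a proper subset of the leaf. Relatedly, the statement you attribute to Fact~\ref{fct:mocne:polar} (any two maximal singular subspaces are joined by a chain of maximal singular subspaces meeting pairwise in planes) is not what that fact asserts, and it is false in rank $3$. The repair is to use the paper's relation instead: every point lies on a plane (since $\Quadr_2(\varpi)\neq\emptyset$), every plane lies in a unique leaf by your first step, and two planes sharing a line lie in the same leaf because distinct leaves meet in at most one point; Fact~\ref{fct:mocne:polar} then gives connectedness within each leaf, in both $\goth M$ and $\fixaf$. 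With that replacement the rest of your argument goes through; note also that your anticipated hurdle for $k>2$ is vacuous, since the bound ``distinct leaves share at most one point'' holds for all $k$ and is already recorded in the paper's introduction.
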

\begin{proof}
  In the first step we consider strong subspaces of 
  $\goth M$ and ${\goth M}\setminus\hipa$. Clearly, they have form
  $e+(k-|e|)X$ ($(e+(k-|e|)X)\setminus\hipa = e+(k-|e|)A$, resp.), where
  $X$ is a strong subspace of $\PolarSpace(\varpi)$
  ($A$ is a strong subspace of the affine polar space 
  $\PolarSpace(\varpi)\setminus\varkappa(e)$ resp.).
  \par
  Therefore, in the second step we can define the class $\planes$ of planes of
  $\goth M$ (of ${\goth M}\setminus\hipa$).
  \par
  In the third step we define a point-to-point relation $\gamma$:
  $a\mathrel{\gamma}b$ when $a$ and $b$ can be joined by a sequence of elements
  of $\planes$ where each two consecutive elements share a line. 
  \par
  In the last step we note that the leaves in question are the
  equivalence classes under the relation $\gamma$.
\end{proof}
Let $\varkappa$ be symplectic and $k=2$. 
%% \warning[and what more\why]
Applying the techniques of the previous subsection we can prove now
\begin{thm}
  The underlying Veronese space $\goth M$ can be defined in terms of
  its affine reduct ${\goth M}\setminus\hipa$.
\end{thm}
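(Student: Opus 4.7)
The plan is to follow, step by step, the strategy that worked for Theorem \ref{thm:verred2ver}, but to replace every appeal to the projective structure by an appeal to the corresponding polar/affine-polar structure inside a leaf. The preceding lemma already gives me for free the leaves of $\goth M$ and their restrictions $(x+\Quadr_0)\setminus\varkappa(x)$, $2\Quadr_0\setminus\hipa$ as definable objects in ${\goth M}\setminus\hipa$. So the task is: starting from these definable leaves, recover the points and lines sitting on the horizon $\hipa$.

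First I would verify the analog of Lemma \ref{lema:kolczaty}: the hyperplane $\hipa\cap\msub_2(\Quadr_0)$ is \kolczaty. Take $e=x+y\in\hipa$ with $x\neq y$; since $\PolarSpace(\varpi)$ is strongly connected (\ref{fct:mocne:polar}) there is a line $L_0$ of $\PolarSpace(\varpi)$ through $y$ not contained in $\varkappa(x)$, and then $x+L_0$ is a line through $e$ not contained in $\hipa$. For $e=2x$, any line through $x$ in $\PolarSpace(\varpi)$ provides the required witness. Consequently the points of $\hipa$ can be identified with the equivalence classes of the parallelism $\parallel_{\hipa}$ in ${\goth M}\setminus\hipa$.

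Next I would recover the lines of $\hipa$ in two groups, just as in Lemmas \ref{lem:vebparallel:1} and \ref{lem:hipa2-2lines}. For lines on a leaf $x+\Quadr_0$: planes of $\fixaf$ are definable by the same triangle-and-parallel construction \eqref{def:hip2plane}, because inside each leaf we see an affine polar space and Fact \ref{fct:mocne:polar} guarantees that the strong singular subspaces there are affine subspaces; consequently the collinearity of three directions ${\goth a}_1,{\goth a}_2,{\goth a}_3$ incident with leaves of type $x+\Quadr_0$ is expressed, as before, by placing three $\parallel$-representatives into a common plane. For lines on $2\Quadr_0$: I apply the net-based definition from Lemma \ref{lem:hipa2-2lines}, using the class $\cal S$ of maximal strong subspaces together with Lemmas \ref{lem:vernet1}, \ref{lem:vernet2} and Proposition \ref{prop:ver2netax} restricted to the polar setting.

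The main obstacle will be the last step: checking that the classification of proper quadrangles (\ref{lem:vernet1}, \ref{lem:vernet2}) and the Net-type Proposition \ref{prop:ver2netax} survive when the lines $m,n,l$ through which the configurations are formed are required to be singular lines of the polar space. Concretely, given three directions ${\goth a}_i=2x_i$ on $2\Quadr_0$ that we expect to be collinear, we need a proper net joining them, and this requires the points $x_1,x_2,x_3$ to lie on a common singular line together with the auxiliary points $a,b$ off it; here I would invoke \ref{fct:mocne:polar} (strong connectedness of the polar space and of affine polar spaces) to produce the needed auxiliary lines inside the relevant maximal singular subspaces, and then verify that no extra spurious collinearities appear. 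Once these two families of improper lines are recovered, the full incidence of $\hipa$ and hence of $\goth M$ is reconstructed from $\fixaf={\goth M}\setminus\hipa$, completing the proof.
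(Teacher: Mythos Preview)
Your strategy is exactly what the paper does: its own proof is the single sentence ``Applying the techniques of the previous subsection we can prove now'' the theorem, so your outline is a faithful attempt to make that sketch explicit.

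Two steps in your elaboration do not go through as written. First, since $\varkappa$ is symplectic one has $2\Quadr_0\subseteq\hipa$; the leaf $2\Quadr_0$ is therefore entirely removed, so there is no nonempty ``restriction $2\Quadr_0\setminus\hipa$'' handed to you by the preceding lemma --- that leaf plays precisely the role that $2S$ played in Lemma~\ref{lem:hipa2-2lines}. Second, and more seriously, your \kolczaty\ argument at $e=2x$ is wrong: a line of $\goth M$ through $2x$ outside $2\Quadr_0$ has the form $x+L_0$ with $L_0$ a $\varpi$-singular line through $x$, and $x+L_0\not\subseteq\hipa$ requires $L_0\not\subseteq\varkappa(x)$, so ``any line'' is too optimistic. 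Every $\varpi$-singular line through $x$ lies in $\varpi(x)$; you must therefore produce a singular line through $x$ inside $\varpi(x)\setminus\varkappa(x)$, and when $\varpi=\varkappa$ is symplectic no such line exists and $\hipa$ is genuinely not \kolczaty\ at $2x$ (compare the paper's remark, just before the nondegeneracy assumption in the projective subsection, that $\hipa$ fails to be \kolczaty\ at $2a$ for $a\in\Rad(\varkappa)$). For $e=x+y$ with $x\neq y$ your conclusion is correct but the appeal to strong connectedness (Fact~\ref{fct:mocne:polar}) is misplaced: that fact is about chains of planes sharing lines, not about producing singular lines that avoid a prescribed hyperplane; the honest argument is to pick $z\in(\varpi(y)\cap\Quadr_0)\setminus\varkappa(x)$ and take $L_0=\LineOn(y,z)$. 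These gaps do not invalidate the overall plan when $\varpi$ and $\varkappa$ are in ``general position'', but the $\varpi=\varkappa$ case needs a separate device to recover the points $2x$ (indeed the paper leaves closely related questions open in its final problems).
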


\section{Appendix: multiplying a parallelism} 

In this additional part we shall discuss the following problem:
{\em how to extend a parallelism from a given structure to a Veronese 
space associated with it}?

Let $\struct{S_0,\lines_0}$ be a \plsX. 
A relation $\parallel\,\subseteq\,\lines_0\times\lines_0$ is called
a {\em partial parallelism} ({\em a preparallelism}) if 
it is an equivalence relation such that
distinct parallel lines are disjoint.
If $\parallel$ is a preparallelism as above then the structure 
$\struct{S_0,\lines_0,\parallel}$ is called {\em a \paplsX};
the relation $\parallel$ is called {\em a parallelism} and the
respective structure is called {\em an \aplsX}\
when the following form of the (affine) Euclid axiom holds:
{\em the equivalence class $[L]_\parallel$} (the {\em direction})
{\em of each line $L$ covers $S_0$}.
The most celebrated class of {\aplsX s} constitute affine spaces.
Recall that the natural parallelism of an affine space coincides with the relation
(so-called {\em Veblen parallelism}) $\veblparal$ defined by the formula
\begin{multline}\label{def:veblparal}
  L_1 \veblparal L_2 \quad \iff \quad L_1 = L_2  \Lor \big( L_1 \not\adjac L_2  %%\;\;\&
  \\
  \&\; \text{ there are two lines } L', L'' 
  \text{ through a point } p 
  \\
  \text { such that } 
  p \ninc L_1, L_2 \Land L_1,L_2 \adjac L',L' %%\Land L_1 \not\adjac L_2.
  \\
  \& \text{ there are collinear points }
  a_1 \in L_1 \cap L', \; a_2 \in L_2 \cap L'' \big).
\end{multline}
Intuitively speaking: $L_1\veblparal L_2$ when $L_1,L_2$ are on a plane and either
coincide or have no common point.

Most of the time (e.g.\ in the case of affine spaces and their Segre products,
cf.\ \cite{segr2afin}) \eqref{def:veblparal} is equivalent to a simpler formula 
with the condition
{\em there are collinear points
  $a_1 \in L_1 \cap L', \; a_2 \in L_2 \cap L''$}
on its right-hand side omitted.
Here, we must handle with this more complex formula since
in a Veronese space not every triangle determines a plane.

\medskip

Let us start with the following approach, which may seem, at the first look, natural
(especially taking into account similarities between Veronese and Segre products).
Let ${\goth A}_0 = \struct{S_0,\lines_0,\parallel_0}$ be a partial affine
partial linear space. We define on the lines of 
$\VerSpace(k,{\struct{S_0,\lines_0}})$
%% $\msub_k(S)$  
the relation $\parallel$ by the formula
\begin{multline}\label{def:ver:paral}
  B_1 \parallel B_2 \iff \text{ there are } e_1,r_1,r_2, L_1,L_2\in\lines_0 
  \text{ such that } 
  \\
  B_1 = e_1 + r_1 L_1 \land B_2 = e_2 + r_2 L_2 \land L_1 \parallel L_2.
\end{multline}
Then we set
$$
  \VerSpace(k,{\goth A}_0) = \struct{\msub_k(S),\lines^\upcircled{k},\parallel}.
$$
\begin{rem}\label{rem:vero:apls}
  Let\/
  ${\goth A}_0$ be an \aplsX,
  $k>1$,
  and ${\goth A} = \VerSpace(k,{\goth A}_0)$.
  Then $\parallel$ defined by \eqref{def:ver:paral} is an equivalence relation 
  and  each of its equivalence classes covers the point set of $\goth A$.
%%  , but there are two distinct parallel lines
%%  through a point.
%%  More precisely: 
  If a line $L$ of $\goth A$ is given then
  in each of the leaves through a point $f$ there passes exactly one
  line parallel to $L$ with respect to the parallelism just defined.
  \par
  Since there are $k$ leaves through $f$,
  the relation $\parallel$ {\em is not}
  a parallelism in $\struct{\msub_k(S),\lines^\upcircled{k}}$ and
  therefore ${\goth A} = \VerSpace(k,{\goth A}_0)$ 
  {\em is not a \paplsX}.
\end{rem}
\begin{proof}
  It is evident that $\parallel$ defined by \eqref{def:ver:paral} is an 
  equivalence relation in $\lines^\upcircled{k}$.
  To justify the `negative' part note, firstly, that an equivalence class
  $[e+rL]_\parallel$  is determined uniquely by the line
  $L$ of ${\goth A}_0$, 
  and each leaf is isomorphic to ${\goth A}_0$.
\iffalse
  Only {\em negative} part of the above statement needs a justification.
  So: let $L\in\lines$ and $a$ be a point on $L$.
  then $2a \inc a+L,2L$ and 
  $a+L \parallel 2L$.
  \par
  Another (counter)example:
  Let $a,b\inc L\in\lines$, $a\neq b$. Then 
  $(a+b)\inc a+L,b+L$ and $a+L \parallel b+L$.
\fi
\end{proof}
%% END OMIT GAAUS

If ${\goth M}_0$ is a partial linear space on 
$\pointnumb_0$ points and $\linenumb_0$ lines with the corresponding
point and line ranks $\pointrank_0$ and $\linerank_0$, then
the parameters of ${\goth M} = \VerSpace(k,{\goth M}_0)$ are as follows:
%%\begin{eqnarray*}
  $\pointnumb_{\goth M}  =  \binom{\pointnumb_0 + k - 1}{k}$,
%%  \\
  $\pointrank_{\goth M}  =  k\cdot \pointrank_0$,
%%  \\
  $\linerank_{\goth M}   =  \linerank_0$, and
%%  \\
  $\linenumb_{\goth M}   =  \binom{\pointnumb_0 + k - 1}{k-1} \cdot \linenumb_0$.
%%\end{eqnarray*}
%
\begin{thm}\label{thm:noparINver}
  Let\/ $k > 1$.
  There is no finite \aplsX\ ${\goth A}_0$ with fixed size of directions of
  its lines such that ${\goth A} = \VerSpace(k,{\goth A}_0)$
  admits a parallelism, the directions of lines of\/ $\goth A$ have a constant size,
  and each leaf of\/ $\goth A$ is an {\em affine subspace} of\/ $\goth A$
  (i.e.\ its each of its leaves is closed under this parallelism).
\end{thm}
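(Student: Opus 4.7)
The plan is to assume such a parallelism $\parallel$ exists on $\goth A = \VerSpace(k,{\goth A}_0)$ and to derive a combinatorial contradiction by showing that the required assignment of leaves to directions cannot exist.

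First I would turn the hypotheses into a rigid combinatorial structure on the set of leaves. From the constant direction size and the fundamental equation $\dirrank \cdot \linerank_0 = \pointnumb_{\goth M}$, together with the leaf-closure assumption (each direction of $\goth A$ restricted to a leaf is either empty or a full direction of that leaf, hence has exactly $\dirrank_0$ lines), I get $\dirrank = c \cdot \dirrank_0$ for the integer $c = \binom{\pointnumb_0 + k - 1}{k}/\pointnumb_0$. Each direction $D$ of $\goth A$ then has a support $C(D)$ consisting of the leaves that contain lines of $D$; these $c$ leaves must be pairwise disjoint (a shared point would produce two distinct $D$-lines through it) and their union is the whole of $\msub_k(S_0)$. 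As a consequence, through each point of $\goth A$ the $k$ leaves there are assigned to $k$ distinct directions of $\goth A$ extending any fixed ${\goth A}_0$-direction, and the rule ``leaf $\mapsto$ its direction'' becomes a proper $k$-coloring of the leaf-intersection graph $G$ whose color classes are spreads of size $c$.

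For $k = 2$ this already suffices. I would verify directly that any two leaves of $\VerSpace(2,{\goth A}_0)$ share exactly one point --- from $(a + S_0) \cap (b + S_0) = \{a + b\}$ and $(a + S_0) \cap 2 S_0 = \{2 a\}$ --- so $G$ is the complete graph on $\pointnumb_0 + 1$ vertices; its chromatic number is $\pointnumb_0 + 1 \geq 4$ (using $\pointnumb_0 \geq \linerank_0 \geq 3$), precluding a proper $2$-coloring.

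For $k \geq 3$ I would propagate a coloring from the leaf $\Lambda_0 := k S_0$. After coloring $\Lambda_0$ with some label $1$, the $k$-clique $\{k S_0,\, (k-1) x + S_0,\, (k-2) x + 2 S_0,\, \ldots,\, x + (k-1) S_0\}$ at each point $k x$ ($x \in S_0$) forces the other $k - 1$ leaves to receive the remaining labels bijectively. Examining next the $k$-cliques at points of support-size $2$ (at $2 x + y$ when $k = 3$, or $(k - 1) x + y$ in general) forces further coincidences of labels on the mixed-support leaves $e + r S_0$ with $|\suport(e)| \geq 2$. For $k = 3$ this closes the argument: one shows that every leaf $x + y + S_0$ with $x \neq y$ is forced to take label $1$, and then the three leaves $x + y + S_0,\, x + z + S_0,\, y + z + S_0$ (for distinct $x, y, z \in S_0$, available since $\pointnumb_0 \geq 3$) share label $1$ while forming a $3$-clique at $x + y + z$ --- contradiction. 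The hard part of the plan is extending this propagation uniformly to $k \geq 4$: I would induct on support-size, tracking how the coloring constraints cascade through $k$-cliques at points of successively larger support, and show that a monochromatic $k$-clique is always forced whenever $\pointnumb_0 \geq \linerank_0 \geq 3$.
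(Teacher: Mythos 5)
Your reduction of the hypotheses to a combinatorial condition on leaves is sound, and in one respect more careful than the paper's own argument: you correctly observe that the leaf-closure assumption only forces a direction $D$ of ${\goth A}$ to restrict to a full direction on those leaves that actually contain a line of $D$, so that a priori $\dirrank = c\cdot\dirrank_0$ where $c$ is the number of supporting leaves, and these form a spread (an exact cover of $\msub_k(S_0)$ by pairwise disjoint leaves). Your $k=2$ case is correct and complete. The paper, however, simply asserts that a direction of ${\goth A}$ is a union of one full direction \emph{per leaf}, i.e.\ that every leaf supports every direction; then $\dirrank = \binom{n+k-1}{k-1}\cdot\dirrank_0$ (all leaves, not your $c=\binom{n+k-1}{k}/n$, which is a fraction $1/k$ of them), and together with $\dirrank_0\linerank=n$ and $\dirrank\,\linerank=\binom{n+k-1}{k}$ this gives $n/k=n$, hence $k=1$, uniformly in $k$ in one line. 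Your weaker constraint is numerically consistent for every $k$, which is exactly why you are forced into the structural analysis of spreads and colorings that the paper avoids.

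The genuine gap is that this structural analysis is not carried out. For $k\ge 4$ you say explicitly that you ``would induct on support-size'' and ``show that a monochromatic $k$-clique is always forced''; that is a plan, not a proof, and it is the entire content of the theorem for those $k$. There is also a soft spot already in the setup of the propagation: the rule ``leaf $\mapsto$ its direction'' (i.e.\ the ${\goth A}$-direction of a fixed reference line transported into each leaf) does give a proper coloring of the intersection graph, but its color classes are only \emph{subsets} of the spreads $\mathrm{supp}(D)$, so nothing guarantees a palette of exactly $k$ colors --- and your clique argument (``forces the remaining labels bijectively'') needs exactly $k$. A cleaner route inside your framework would be to drop the coloring altogether and prove that \emph{no} spread of leaves exists for $k\ge 2$, $n\ge 3$ (one spread suffices for the contradiction, since any single direction would supply one); this is doable for $k=3$ along the lines you sketch, but you have not proved it for general $k$ either. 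To repair the proof you should either supply the missing general-$k$ combinatorics, or justify the paper's stronger claim that every direction meets every leaf, which collapses everything to the one-line count above.
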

\begin{proof}
  Suppose, to the contrary, that there are such $\goth A$ and ${\goth A}_0$;
  let $n$ be the number of points of ${\goth A}_0$.
  Denote by
  $\dirrank_0 = \dirrank_{{\goth A}_0}$
  and $\dirrank = \dirrank_{\goth A}$
  the corresponding sizes of directions.
  Let $\linerank$ be the line size of ${\goth A}_0$,
  and thus of $\goth A$ as well.
  When we observe that a
  direction must cover the point set of $\goth A$ we get
  \begin{ctext}
    $\dirrank_0\cdot\linerank = n$ and
    $\dirrank\cdot \linerank = \binom{n+k-1}{k}$.
  \end{ctext}
  Since no two leaves of $\goth A$ have a line in common,
  a direction of $\goth A$ is a union of directions considered in each of the
  leaves, and each of these leaves is isomorphic to ${\goth A}_0$. 
  This gives us the relation
  %
%%  \begin{equation*}
    $ \dirrank =  \text{ the number of leaves }\cdot \dirrank_0$,
%%  \end{equation*}
  %
  which yields
  $\binom{n+k-1}{k} = n \binom{n+k-1}{k-1}$,
  so $k=1$.
\end{proof}
In particular, \ref{thm:noparINver} yields a rather strange result: the structure
$\VerSpace(k,{\afgeo(m,q)})$ can be covered by a family of subspaces each one isomorphic to
the affine space $\afgeo(m,q)$ and therefore a natural parallelism intrinsically definable
exists in each of the covering subspaces:
\begin{rem}\label{rem:parincov}
  Let ${\goth A}_0$ be an affine space and 
  let $L_1,L_2$ be lines of\/ $\VerSpace(k,{\goth A}_0)$.
  Then 
  $L_1 \veblparal L_2$ iff there are lines $l_1,l_2$ parallel in ${\goth A}_0$ such that
  and $L_i = e + (k- |e|) l_i$ for some $e$ and $i=1,2$.
\end{rem}
\noindent
The proof of \ref{rem:parincov} is immediate after \eqref{def:veblparal} and \ref{fct:beztroj}.
The union of the parallelisms in leaves is a partial parallelism
and there is no `global' parallelism, that agree in a natural way
with the parallelisms on leaves, definable
on the whole line set of $\VerSpace(k,{\afgeo(m,q)})$.

However, theorem \ref{thm:noparINver} does not mean that $\VerSpace(k,{\goth A}_0)$
does not admit {\em some} parallelism, but this problem is not adressed to this paper.

%% BO BLAD

%\section{Some open problems and a conjecture}

There are more open problems, that have arised through this paper.
We believe they are interesting on its own right and worth to be considered.
Let us formulate the main of them.
%
\iffalse
\begin{prob}
  Is it possible to define $\VerSpace(k,{\goth M})$ in terms of $\GrasSpace(k,{\goth M})$?
  When the Grassmannian in question is further restricted to a complement of a hyperplane,
  the answer seems positive, as the missing points are exactly the directions, but without
  a parallelism, it may be hard!
\end{prob}
\fi
%
\iffalse
\begin{prob}
  What properties should, generally, a function $\hipfun$ given by \eqref{def:h} meet so as 
  $\hipa$ defined by \eqref{def:h2hipa} is a subspace
  and, consequently, it is a hyperplane?
  In a less general settings, it suffices to find
%%  The problem, slightly rephrased can be read as follows:
  
  what general properties of a function $\hipfun$ assure that with the above definitions
  adopted we get 
  $$
  \big( \bigcup\{ e + (k-|e|) \hipfun_e\colon e \in\wsub_k(S) \} \big)
    \cap \big( e' + (k-|e'|) S \big) = e' + (k-|e|) \hipfun_{e'} 
  $$
  for each $e' \in \wsub_k(S)$.
  Clearly, the latter condition yields that $\hipa$ is a subspace.
\end{prob}
\fi

%
\begin{prob}
The claim of \ref{thm:verred2ver} seems valid also for Veronese spaces associated with projective spaces,
where a quasi-correlation $\varkappa$ is degenerate, since $\dim(\Rad(\varkappa))=1$.
Does \ref{thm:verred2ver} remain true when $\varkappa$ is degenerate and $\dim(\Rad(\varkappa))>1$?
%%  does this depend anyhow on the dimension of the radical of $\varkappa$?
  \par
  \iffalse
  The claim of \ref{thm:verred2ver} seems valid also when $\dim(\Rad(\varkappa))=1$.
  In that case the proof of \ref{lem:hipa2-2lines} can be modified so as it works in case
  when $a + S \subseteq \hipa$ for a point $a \in \Rad(\varkappa)$.
  However, when $m$ is a line of $\fixproj$ contained in $\hipa$ then 
  no line $a + m$ of $\fixaf$ can be completed to a net outside $\hipa$
  (cf.\ \ref{lem:vernet2}) and to prove an analogue of \ref{lem:hipa2-2lines}
  one must find some other methods.
  On the other hand, when $a\in\Rad(\varkappa)$ then each line of $\goth M$
  through $2a$ is contained in $\hipa$ and thus $\hipa$ is not
  \kolczaty, so new difficulties arise and no end in sight. 
\fi
\end{prob}

\begin{prob}\def\fixpol{\mbox{\boldmath$\goth Q$}}
  If $\varkappa = \varpi$ then the point set
  of $\VerSpace(2,\fixproj)$ and of $\VerSpace(2,\fixpol)$
  (we write ${\fixpol} = \PolarSpace(\varpi)$) coincide.
  The distinction concerns lines only.
  How to characterize $\VerSpace(2,\fixproj)\setminus\hipa$ and $\VerSpace(2,\fixpol)\setminus\hipa$?
  \par
\iffalse
  Indeed, if $\varkappa = \varpi$ then the point set
  of $\VerSpace(2,\fixproj)$ and of $\VerSpace(2,\fixpol)$
  (we write ${\fixpol} = \PolarSpace(\varpi)$) coincide.
  The set $\hipa$ determined by $\varkappa$ is a hyperplane both
  in $\VerSpace(2,\fixproj)$ and in $\VerSpace(2,\fixpol)$
  so, consequently,
  $\VerSpace(2,\fixproj)\setminus\hipa$ and $\VerSpace(2,\fixpol)\setminus\hipa$
  have exactly the same points.
  The distinction concerns lines only.
  Note that each pair of points $e_1 = p+a_1$, $e_2 = p+ a_2$ collinear in
  $\VerSpace(2,\fixproj)$ can be completed by a unique point
  $e_1\ast a_2$ ($= a_1 + a_2$) to a triangle not contained in any leaf.
  Then a line $L$ of $\VerSpace(2,\fixproj)$ not contained in $\hipa$
  yields a line of $\VerSpace(2,\fixpol)$
  iff $e_1\ast e_2\in\hipa$ for each distinct $e_1,e_2$ on $L$
  (equivalently, though surprisingly from the point of view of geometry: `for some distinct $e_1,e_2$').
  This observation may be considered as a starting information in the search of
  characterization in question.
\fi
\end{prob}

\begin{prob}
  How to characterize the geometry of the \paplsX\ 
  $\struct{S',\lines',\veblparal}$, where 
  $\struct{S',\lines',\parallel_{\hipa}} = {\fixaf}$
  i.e.\ of the reduct $\VerSpace(2,{\fixproj})$
  equipped with the parallelism imitating the affine one?
\end{prob}

\iffalse
\subsubsection{General case: how to handle it}

\begin{prob}
  First: does the claim of \ref{lema:symplect2hipa}
  follow (HOW) for arbitrary $\varkappa$?
%%  is in any case $\hipa$ a subspace ($m=2$ above sufices)
  Second: can analogous/similar construction be provided for $k>2$\why
  It seems possible that $k$-linear forms may yield examples of
  hyperplanes in $\VerSpace(k,{\fixproj})$.
\end{prob}

\begin{prob}
  Is it possible to extend the construction to other spaces which admit
  a correlation? Or a polarity: e.g.\ to Grassmannians?
\end{prob}

\begin{prob}
    Characterize the geometry of ${\goth M}\setminus\hipa$.
\end{prob}
\fi

%%%%%%%%%%%%%%%%%%%%%%%%%%%%%%%%%%%%%%%%%%%%%%%%%%%%%%%%%%%%%%%%%%%%%
%%%%%%%%%%%%%%%%%%%%%%%%%%%%%%%%%%%%%%%%%%%%%%%%%%%%%%%%%%%%%%%%%%%%% BIBLIO
%%%%%%%%%%%%%%%%%%%%%%%%%%%%%%%%%%%%%%%%%%%%%%%%%%%%%%%%%%%%%%%%%%%%%

%%%%%%%%%%%%%%%%%%%%%%%%%%%%%%%%%%%%%%%%%%%%%%%%%%%%%%%%%%%%%%%%%%%%%
%%%%%%%%%%%%%%%%%%%%%%%%%%%%%%%%%%%%%%%%%%%%%%%%%%%%%%%%%%%%%%%%%%%%% Author's data
%%%%%%%%%%%%%%%%%%%%%%%%%%%%%%%%%%%%%%%%%%%%%%%%%%%%%%%%%%%%%%%%%%%%%

\begin{flushleft}
  \noindent\small
  K. Petelczyc, K. Pra\.zmowski, M. Pra\.zmowska, M. \.Zynel\\
  Institute of Mathematics, University of Bia{\l}ystok,\\
  Akademicka 2, 15-267 Bia{\l}ystok, Poland\\
  \verb+kryzpet@math.uwb.edu.pl+,
  \verb+krzypraz@math.uwb.edu.pl+,
  \verb+malgpraz@math.uwb.edu.pl+,
  \verb+mariusz@math.uwb.edu.pl+
\end{flushleft}

\end{document}